 \documentclass[letterpaper,11pt]{article}

\usepackage[usenames,dvipsnames,svgnames,table]{xcolor}
\usepackage[colorlinks=true,linkcolor=blue,citecolor=ForestGreen]{hyperref}
\usepackage{amsmath,amssymb,amsfonts,amsthm}
\usepackage{enumerate}
\usepackage{scrextend}
\usepackage[utf8]{inputenc}
\usepackage[T1]{fontenc}
\usepackage[capitalize]{cleveref}
\usepackage{bm}
\usepackage{orcidlink}
\usepackage{fullpage}	
	\allowdisplaybreaks
\usepackage{tikz}

\Crefname{figure}{Figure}{Figures}
\Crefname{claim}{Claim}{Claims}
\Crefname{theorem}{Theorem}{Theorems}

\crefformat{equation}{\textup{#2(#1)#3}}
\crefrangeformat{equation}{\textup{#3(#1)#4--#5(#2)#6}}
\crefmultiformat{equation}{\textup{#2(#1)#3}}{ and \textup{#2(#1)#3}}
{, \textup{#2(#1)#3}}{, and \textup{#2(#1)#3}}
\crefrangemultiformat{equation}{\textup{#3(#1)#4--#5(#2)#6}}%
{ and \textup{#3(#1)#4--#5(#2)#6}}{, \textup{#3(#1)#4--#5(#2)#6}}{, and \textup{#3(#1)#4--#5(#2)#6}}

\Crefformat{equation}{Equation~\textup{#2(#1)#3}}
\Crefrangeformat{equation}{Equations~\textup{#3(#1)#4--#5(#2)#6}}
\Crefmultiformat{equation}{Equations~\textup{#2(#1)#3}}{ and \textup{#2(#1)#3}}
{, \textup{#2(#1)#3}}{, and \textup{#2(#1)#3}}
\Crefrangemultiformat{equation}{Equations~\textup{#3(#1)#4--#5(#2)#6}}%
{ and \textup{#3(#1)#4--#5(#2)#6}}{, \textup{#3(#1)#4--#5(#2)#6}}{, and \textup{#3(#1)#4--#5(#2)#6}}

\crefname{section}{Section}{Sections}
\crefname{theorem}{Theorem}{Theorems}
\crefname{claim}{Claim}{Claims}
\crefname{lemma}{Lemma}{Lemmas}

\usepackage{tikz}
\tikzset{normalnode/.style={circle, draw, fill=black, inner sep=0, minimum width=1.5mm}}

\usetikzlibrary{fit}
\usetikzlibrary{shapes,arrows,shadows,positioning}
\usetikzlibrary{backgrounds}

\newtheorem{theorem}{Theorem}[section]

\newtheorem{corollary}[theorem]{Corollary}

\newtheorem{lemma}[theorem]{Lemma}
\newtheorem{fact}[theorem]{Fact}

\theoremstyle{definition}
\newtheorem{definition}[theorem]{Definition}

\theoremstyle{remark}
\newtheorem{remark}[theorem]{Remark}

\renewcommand{\le}{\leqslant}
\renewcommand{\ge}{\geqslant}
\renewcommand{\leq}{\leqslant}
\renewcommand{\geq}{\geqslant}

\DeclareMathOperator{\Mallow}{Mallows}
\DeclareMathOperator{\Mallows}{Mallows_{seq}}

\newcommand{\bE}{\mathbb{E}}
\newcommand{\bN}{\mathbb{N}}
\newcommand{\bS}{\mathbf{S}}

\newcommand{\comment}[1]{}

\newcommand{\product}{\Lambda}
\newcommand{\productO}{\Lambda(\cS)}
\newcommand{\sumI}{\mathbf{I}}
\newcommand{\Arr}{\ensuremath{\operatorname{\mathbf{Arr}}}}
\newcommand{\Pra}[1]{\mathbb{P}\left(#1\right)}
\newcommand{\bB}{\mathbf{B}}
\newcommand{\cA}{\mathcal{A}}
\newcommand{\cB}{\mathcal{B}}
\newcommand{\Ind}{\mathbb{I}}

\newcommand{\barpi}{\bar \varPi}
\newcommand{\barPi}{\bar {\mathbf \Pi}}
\newcommand{\barbarPi}{{\bar{\bar {\mathbf \Pi}}}}
\newcommand{\NN}{\mathbb{N}}
\newcommand{\cS}{\mathcal{S}}
\newcommand{\cE}{\mathcal{E}}
\newcommand{\dtv}{\operatorname{d}_{TV}}
\newcommand{\bW}{\mathbf{W}}
\newcommand{\subseq}{\prec}
\begin{document}

\title{Arrangements of Consecutive Numbers in Mallows Permutations}

\author{Katarzyna Rybarczyk\thanks{Faculty of Mathematics and Computer Science, Adam Mickiewicz University, 60-614 Pozna\'n, Poland, \texttt{katarzyna.rybarczyk@amu.edu.pl}, 
} 
}

\date{}

\maketitle
\begin{abstract} We study the random variable that counts the number of specific arrangements of clustered consecutive numbers in permutations under the Mallows distribution. We provide an asymptotic expression for the expected value of this random variable. This result extends and tightens the previously known result by Pinsky (2022) concerning clustered consecutive numbers in Mallows permutations. Moreover, we identify a range of parameters for which the distribution of the number of arrangements of clustered consecutive numbers in Mallows permutations is close to a Poisson distribution.
\end{abstract} 	

\section{Introduction}

Let $\bS_n$ be the family of all permutations of $[n]=\{1,\ldots,n\}$. Given $n\in \bN$ and $q\in (0,\infty]$, the Mallows distribution $\Mallow(n,q)$ samples a random permutation $\varPi_n$ from $\bS_n$ according to the probability measure
\[\forall_{\sigma\in \bS_n}\Pra{\varPi_n=\sigma}=\frac{q^{\text{inv}(\sigma)}}{a_{n,q}},\]
where 
\[
\text{inv}(\sigma)=|\{(i,j):1\le i<j \le n, \sigma(i)>\sigma(j)\}|,
\]
and $a_{n,q}=\sum_{\pi\in \bS_n}q^{\text{inv}(\pi)}$ is a normalising constant. 

Since its introduction by Mallows~\cite{Mallows1957}, the model has attracted much attention. It has been considered in various contexts such as, for example, Markov chains \cite{BenjaminiEtal2005,DiaconisEtal2000}, finitely dependent colourings of the integers \cite{HolroydEtal2020}, stable matchings \cite{AngelEtal2021}, random binary search trees \cite{AddarioBerryEtal2021}, $q$--analogs of exchangeability \cite{GnedinOlshanski2010,GnedinOlshanski2012}, determinantal point processes \cite{BorodinETAL2010}, statistical physics \cite{Starr2009,StarrWalters2018}, genomics \cite{FangETAL2021}, random graphs \cite{Spagetti}, logical limit laws \cite{MullerSkermanVer2025}, permuton limits \cite{JasinskaRath2025}, and more. In the context of this article, the most closely related results are those concerning patterns \cite{CraneDesalvo2017,CrameDeSalvoElizalde2018,Pinsky2021,Dubach2026}, the number of descents \cite{He2021}, the longest monotone subsequence \cite{BasuBhat17,BhatnagarPeled2015,MuellerStarr2013}, the longest common subsequence of two Mallows permutations \cite{Jin2019}, and the cycle structure \cite{GladkichPeled2018,HeMV23}.

We investigate the clustering of consecutive elements under the Mallows distribution. This problem, for Mallows and $p$--shifted distributions, has been considered by Pinsky in \cite{Pinsky2022}. For a permutation $\varPi_n \sim \Mallow(n,q)$, Pinsky analyzes the random variable counting the number of intervals $\{j,j+1,\ldots,j+\ell-1\} \subseteq [n]$ whose images under $\varPi_n$ form consecutive natural numbers. Moreover, it is noted in \cite{Pinsky2022} that, with some additional work, these results can be extended to more general arrangements of consecutive elements. Our objective is to formulate our results at the highest possible level of generality. To this end, we introduce a precise definition of arrangements of clustered consecutive numbers in a permutation.

\begin{definition}\label{Def:Sarr}
	Let $\ell\ge 2$  be a constant integer and $\cS$, $\emptyset\neq\cS\subseteq \bS_\ell$, be a family of permutations of $[\ell]=\{1,\ldots,\ell\}$. 
	Moreover let $\varPi_n\in \bS_n$, $n\ge \ell$, $0\le k_1\le n-\ell$, and $0\le k_2\le n-\ell$. There is  an $(\cS,k_1,k_2)$--arrangement in  $\varPi_n$ if
	\[
	\exists_{\sigma\in \cS}\forall_{i\in [\ell]}\varPi_n(k_2+i)=k_1+\sigma(i).\]
	For any , $0\le k_1\le n-\ell$, $0\le k_2\le n-l$, an $(\cS,k_1,k_2)$--arrangement we call an $\cS$--arrangement.    	
\end{definition}
In other words, in an $(\cS,k_1,k_2)$--arrangement in $\varPi_n$, the consecutive numbers $k_2+1, \ldots, k_2+\ell$ take values in the set $\{k_1+1,\ldots,k_1+\ell\}$ in the order determined by a permutation $\sigma$ from $\cS$.
When we set $\cS=\bS_\ell$, the presence of an $(\cS,k_1,k_2)$--arrangement in $\varPi_n$ implies that the values $\varPi_n(k_2+1), \varPi_n(k_2+2),\ldots,\varPi_n(k_2+\ell)$ are $\ell$ consecutive numbers $k_1+1,\ldots,k_1+\ell$ (in any possible order), which is exactly the case considered in~\cite{Pinsky2022}.
Given $\emptyset\neq\cS\subseteq \bS_\ell$, denote by $\Arr_{\cS}(\varPi_n)$ the number of $\cS$--arrangements in $\varPi_n$. Then the random variable studied in \cite{Pinsky2022} is $\Arr_{\bS_\ell}(\varPi_n)$.

The Mallows distribution satisfies a duality relation. Let the reverse of a permutation $\sigma=(\sigma_1,\ldots,\sigma_n)$ be defined by
\[\sigma^{rev}=(\sigma_n,\ldots,\sigma_1).\]
For $\varPi_n\sim \Mallow(n,q)$ and $\varPi_n'\sim \Mallow(n,1/q)$, we have
\[\Pra{\varPi_n=\sigma}=\Pra{\varPi_n'=\sigma^{rev}}.\]

Therefore, all theorems concerning $\cS$--arrangements for $\varPi_n \sim \Mallow(n,q)$ with $q \in (0,1)$ can be reformulated in terms of $\cS^{rev}$--arrangements, where $\cS^{rev} = \{\sigma^{rev} \in \bS_{\ell} : \sigma \in \cS\}$, and $\varPi_n \sim \Mallow(n,1/q)$. Consequently, without loss of generality, the analysis may be restricted to the regime $q \in (0,1)$.

Moreover, for $q\in (0,1)$, the Mallows distribution assigns higher probability to permutations with fewer inversions, whereas for $q>1$ it favours permutations with a larger number of inversions. The case $q=1$ corresponds to the uniform distribution on $\bS_n$. A particularly interesting situation arises when $q = q_n \to 1$ as $n \to \infty$, capturing a limiting transition toward uniformity.

The following theorem was shown in \cite{Pinsky2022}
\begin{theorem}[Theorem 1.1 \cite{Pinsky2022}]\label{Thm:Pinsky}
	Let $q=q_n=1-c/n^{\alpha}$, for $c>0$, $\alpha\in (0,1)$, $\ell\ge 2$  be a constant integer, and
	$\varPi_n\sim\Mallow(n,q)$,
	\begin{align*}
		\frac{((\ell-1)!)^2\ell!}{(2\ell)!}\le &\liminf_{n\to\infty}{\frac{\bE \Arr_{\bS_\ell}(\varPi_n)}{n(1-q)^{\ell-1}}} &&\text{ and }& &\limsup_{n\to\infty}{\frac{\bE \Arr_{\bS_\ell}(\varPi_n)}{n(1-q)^{\ell-1}}}
		\le (\ell-1)!.
	\end{align*}
\end{theorem}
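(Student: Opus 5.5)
We aim to compute $\bE\Arr_{\bS_\ell}(\varPi_n)$ directly by linearity, $\bE\Arr_{\bS_\ell}(\varPi_n)=\sum_{k_1,k_2}\Pra{\{\varPi_n(k_2+1),\ldots,\varPi_n(k_2+\ell)\}=\{k_1+1,\ldots,k_1+\ell\}}$. To estimate each probability we use the standard sequential construction of the Mallows permutation: $\varPi_n(1)$ is chosen among $[n]$ according to a truncated geometric law with ratio $q$, and each subsequent value is chosen among the remaining values, the $j$-th smallest remaining value having probability $q^{j-1}(1-q)/(1-q^{m})$, where $m$ is the number of remaining values. The key point is the following. Once $\varPi_n(k_2+1)=v$ has been fixed, the event requires that each of the next $\ell-1$ values fall in a prescribed window of $\ell$ consecutive original values. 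After removing used values, each such window corresponds to a bounded set of ranks. Each of these $\ell-1$ constrained steps therefore contributes probability of order $(1-q)$ times a factor $q^{r}$. Here $r$ is the rank of the target value among the remaining values.

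We first handle the first step. The first element $v=\varPi_n(k_2+1)$ is summed over all positions. For $q=1-c/n^\alpha$ with $\alpha<1$, a geometric choice has scale $1/(1-q)=n^\alpha/c\ll n$, so boundary effects at both ends are negligible. The factor $1/(1-q^m)$ is $1+o(1)$ except on a set of $O(n^\alpha)$ positions near the end. This contributes $o(n(1-q)^{\ell-1})$ because the per-position probability is still $O((1-q)^{\ell-1})$.

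Summing over $k_1$ and $k_2$, the total contribution of the first element is about $n$: the sum over $k_1$ of $\Pra{\varPi_n(k_2+1)\in\{k_1+1,\ldots,k_1+\ell\}}$ is at most $\ell$. Since $k_1$ is determined up to $\ell$ choices by $v$, we restrict to the configurations consistent with $v$. Then we bound the conditional probability of the remaining $\ell-1$ steps. For the \textbf{upper bound}, at step $j$ ($2\le j\le\ell$) there are at most $\ell-j+1$ admissible values left in the window. Each has conditional probability at most $(1-q)(1+o(1))$, since $q^{r}\le 1$ and the normalisation is $1-q^m\to1$. Hence the conditional probability is at most $(1-q)^{\ell-1}\prod_{j=2}^{\ell}(\ell-j+1)(1+o(1))=(\ell-1)!(1-q)^{\ell-1}(1+o(1))$. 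Summing over $v$ and $k_2$ gives $n(\ell-1)!(1-q)^{\ell-1}(1+o(1))$. Some care is needed to avoid overcounting the $\ell$ choices of $k_1$: the window is determined by $v$ together with the set of chosen values, so we sum over sequences of values rather than over $k_1$.

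For the \textbf{lower bound}, we need a matching lower estimate of the factors $q^{r}$. The ranks $r$ of the window values among the remaining values equal the number of values below the window not yet used. This is the delicate part: for a typical $v$ the window sits at rank $\Theta(1/(1-q))$, so $q^{r}$ is of constant order and not close to $1$. Our plan is to condition on the rank $R$ of $v$ at step $k_2+1$, whose law is roughly geometric with density $(1-q)q^{R}$. We use that all window values have ranks within $\ell$ of $R$, so each constrained step contributes at least $(1-q)q^{R+\ell}$. For the window to be fully available, we also need that none of its values was used earlier; we bound this loosely from below. Integrating $q^{(\ell-1)R}$ against the geometric law of $R$ gives $\sum_R(1-q)q^{R}\cdot q^{(\ell-1)R}\approx 1/\ell$ relative to the first-step mass. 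Combined with counting over the orderings and window positions, this should produce a constant of the shape $((\ell-1)!)^2\ell!/(2\ell)!$, which is a Beta-type integral $\int x^{\ell-1}(1-x)^{\ell}\,dx$ arising from the positions of $\ell$ window elements relative to a uniform scale.

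The main obstacle is making the lower bound rigorous, in particular controlling the availability of the window (no earlier element landing in it) and the correlations between the ranks at successive steps. Our approach is to restrict to a subevent where the preceding $O(n^\alpha\log n)$ values avoid the window. Stationarity of the Mallows process on such scales makes this probability computable, and this is where the Beta integral constant should emerge.
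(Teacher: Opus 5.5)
Both bounds have a genuine gap. The upper-bound argument loses a factor $\ell$, and the lower bound is not actually carried out.

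\emph{Upper bound.} Your count of $\ell-j+1$ admissible values at step $j$ is valid only for a fixed value window $\{k_1+1,\ldots,k_1+\ell\}$. As you note yourself, $\sum_{k_1}\Pra{\varPi_n(k_2+1)\in\{k_1+1,\ldots,k_1+\ell\}}$ is (up to boundary effects) $\ell$, not $1$. Summing over value sequences does not repair this. Given $v$, there are $\ell\cdot(\ell-1)!=\ell!$ sequences $(v_2,\ldots,v_\ell)$ making $\{v,v_2,\ldots,v_\ell\}$ an interval (already at step $2$ there are $2(\ell-1)$ admissible values). With $q^{r}\le 1$, each such sequence has conditional probability up to $(1-q)^{\ell-1}$. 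So as written you only get $\limsup\le \ell!$.

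The missing $1/\ell$ sits exactly in the factors $q^{r}$ you discarded. The rank $R$ of $\varPi_n(k_2+1)$ among unused values has law $q^{R-1}(1-q)/(1-q^{n-k_2})$, independently of the past. Every later value of a window containing $v$ has rank at least $R-2\ell$ at its step. Hence the completion probability given $R$ is at most $\ell!\,(1-q)^{\ell-1}q^{(\ell-1)R}q^{-2\ell^2}(1+o(1))$. Averaging, $\sum_{R\ge 1}(1-q)q^{R-1}q^{(\ell-1)R}=(1-q)q^{\ell-1}/(1-q^{\ell})\to 1/\ell$. So the rank averaging you reserve for the lower bound is already needed for the upper bound.

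Separately, for $n-k_2=O(1/(1-q))$ your normalisation claim fails, since $1-q^{m}\not\to 1$ there. You can handle those positions with the inversion-preserving symmetry $i\mapsto n+1-\varPi_n(n+1-i)$, which maps them to positions near the start.

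\emph{Lower bound.} Here the proposal stops exactly where the work begins. If every window value were unused at time $k_2+1$, the rank averaging above would give the constant $(\ell-1)!$. So the entire ratio $(\ell-1)!\,\ell!/(2\ell)!$ between the two constants in the statement must come from a quantitative lower bound on the probability that the window around $v$ is still entirely unused, jointly with $R$. In your time-forward picture this depends on the law of the random set of unused values at time $k_2$, which you never compute. A loose bound will not reach the stated constant, and the Beta integral $\int_0^1 x^{\ell-1}(1-x)^{\ell}\,dx$ is a guess rather than a derivation.

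The paper does not reprove this cited theorem, but it supersedes it. Theorem~\ref{Thm:Main:Expected} gives the exact limit $\ell![(\ell-1)!]^2/(2\ell-1)!$, which lies between the two bounds. Its key move is to swap roles. In the Mallows infinite process it fixes the window of values (spots) $\{k+1,\ldots,k+\ell\}$ and conditions on the first step whose value falls in $[k+\ell]$. A value below the window reduces the problem to $\bB_{k-1}$. A value in the window forces the arrangement to be completed in the next $\ell-1$ steps. This gives the exact recursion \eqref{Eq:Qk:recurrence}, so availability never has to be estimated.

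That recursion yields $\bB_k\le\bB_0=\prod_{i=1}^{\ell-1}(1-q^i)$ (Lemma~\ref{Lem:QkLEQ0}). Through Lemma~\ref{Lem:Qk:SumIProp} it also yields the factor $\prod_{i=1}^{\ell-1}(1-q^i)/(1-q^{\ell+i})\to(\ell-1)!\,\ell!/(2\ell-1)!$, which is precisely the availability correction you would need. Lemma~\ref{Lem:CoupligProcesses} and the inclusion lemmas then transfer the result to $\varPi_n$.
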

\begin{remark}\label{Rem:Thm:Pinsky}
	A careful analysis of the proof of Theorem~1.1 \cite{Pinsky2022} reveals that the arguments remain valid if the assumption concerning 
	$q$
	is replaced by 
	$(1-q)n/\ln n \to \infty$ as $n\to\infty$. 
\end{remark}
In this article, among others, we fill the gap from Theorem~\ref{Thm:Pinsky} and generalise the result to arbitrary $\cS$--arrangements. Namely we  establish the asymptotic value of  $\bE \Arr_{\cS}(\varPi_n)$, $\ell\ge 2$, for $\emptyset\neq\cS\subseteq \bS_\ell$, $\varPi_n\sim\Mallow(n,q)$, and $(1-q)n/\ln n \to \infty$ as $n\to\infty$. 	

In addition, in \cite{Pinsky2022} Pinsky notices that there is the value of $q$ for which there might be a kind of threshold for the property $\{\Arr_{\bS_\ell}(\varPi_n)\ge 1\}$, that there is a sequence of $\ell$ consecutive numbers in $\varPi_n\sim\Mallow(n,q)$. 
\begin{corollary}[Corollary 1.2 \cite{Pinsky2022}]\label{Cor:Pinsky}
	Let	$q=q_n=1-c/n^{\alpha}$, for $c>0$, $\alpha\in (0,1)$, \\
	$\varPi_n\sim\Mallow(n,q)$, $\ell\ge 3$  be a constant integer. Then\\
	\[\lim_{n\to\infty}\bE\Arr_{\bS_\ell}(\varPi_n)=
	\begin{cases}
		0&\text{ for }(1-q)n^{1/(\ell-1)}\to 0;\\
		\infty&\text{ for }(1-q)n^{1/(\ell-1)}\to \infty;
	\end{cases}	\]
\end{corollary}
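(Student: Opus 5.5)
The corollary follows directly from Theorem~\ref{Thm:Pinsky}: the expectation is squeezed between constant multiples of $n(1-q)^{\ell-1}$, and we only need to see how this quantity behaves in the two regimes. The hypothesis $q=1-c/n^{\alpha}$ with $\alpha\in(0,1)$ is exactly the setting of Theorem~\ref{Thm:Pinsky}, so its conclusion is available. Set
\[
c_1=\frac{((\ell-1)!)^2\ell!}{(2\ell)!}>0,\qquad c_2=(\ell-1)!.
\]
Theorem~\ref{Thm:Pinsky} then gives, for all sufficiently large $n$,
\[
\tfrac{c_1}{2}\, n(1-q)^{\ell-1}\le \bE\Arr_{\bS_\ell}(\varPi_n)\le 2c_2\, n(1-q)^{\ell-1}.
\]

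Next, rewrite the scaling quantity as a power:
\[
n(1-q)^{\ell-1}=\bigl((1-q)n^{1/(\ell-1)}\bigr)^{\ell-1}.
\]
Since $\ell-1\ge 2$ is a fixed positive exponent, $(1-q)n^{1/(\ell-1)}\to 0$ forces $n(1-q)^{\ell-1}\to 0$. The upper bound then gives $\bE\Arr_{\bS_\ell}(\varPi_n)\to 0$. Symmetrically, $(1-q)n^{1/(\ell-1)}\to\infty$ forces $n(1-q)^{\ell-1}\to\infty$, and the lower bound gives $\bE\Arr_{\bS_\ell}(\varPi_n)\to\infty$.

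With $q=1-c/n^{\alpha}$ we have $(1-q)n^{1/(\ell-1)}=c\,n^{1/(\ell-1)-\alpha}$. So the two cases correspond to $\alpha>1/(\ell-1)$ and $\alpha<1/(\ell-1)$ respectively, and both ranges are nonempty inside $(0,1)$ because $\ell\ge 3$. At the boundary $\alpha=1/(\ell-1)$ the quantity stays bounded and Theorem~\ref{Thm:Pinsky} says nothing sharp, but the corollary does not claim anything there.

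There is no real obstacle. The only point needing care is that the liminf/limsup bounds in Theorem~\ref{Thm:Pinsky} must be turned into eventual two-sided bounds with positive constants. This is valid because $c_1>0$ and $c_2<\infty$, which is why the factors $\tfrac{c_1}{2}$ and $2c_2$ appear above. If one wants the broader regime of Remark~\ref{Rem:Thm:Pinsky}, the same argument goes through verbatim whenever $(1-q)n/\ln n\to\infty$.
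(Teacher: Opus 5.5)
Your proof is correct, and it follows the standard route: turn the $\liminf$/$\limsup$ bounds of Theorem~\ref{Thm:Pinsky} into eventual two-sided bounds of order $n(1-q)^{\ell-1}$, then use $n(1-q)^{\ell-1}=\bigl((1-q)n^{1/(\ell-1)}\bigr)^{\ell-1}$ to read off both regimes. The paper does not reprove this statement but quotes it from Pinsky as a consequence of his Theorem~1.1, which is exactly the derivation you give. The paper's own Theorem~\ref{Thm:Main:Expected}, with its sharp constant, would give the same conclusion by the same argument.
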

In this article we study the distribution of $\Arr_\cS(\varPi_n)$, $\ell\ge 3$, $\varPi_n\sim\Mallow(n,q)$, near that threshold. More precisely, we show that, in a Mallows permutation $\varPi_n$, close to the threshold suggested by Corollary~\ref{Cor:Pinsky}, the random variable $\Arr_\cS(\varPi_n)$ is close in terms of the total variation distance to the random variable with the Poisson distribution $\operatorname{Po}(\bE (\Arr_\cS(\varPi_n)))$.

We use the following notations.
For $t_1\le t_2$, $t_1,t_2\in \NN_0=\{0,1,2,\ldots\}$ we define $[t_1,t_2]=\{t_1,t_1+1,\ldots,t_2\}$ and $[t_1,t_2]=\emptyset$ if $t_1>t_2$. Moreover $[t]=\{1,\ldots,t\}=[1,t]$, $t\in \NN$. We denote by $\bS_t$ the family of all permutations on $[t]$, $t\in \NN=\{1,2,3,\ldots\}$.
Moreover we use standard asymptotic notation as $n\to \infty$: $a_n=o(b_n)$ if $a_n/b_n\to 0$, $a_n=O(b_n)$ if $\exists_{C>0, N\in \NN}\forall_{n\ge N}|a_n|< Cb_n$, $a_n=\Theta(b_n)$ if $\exists_{c,C>0, N\in \NN}\forall_{n\ge N}cb_n<|a_n|< Cb_n$. All limits are taken as $n\to \infty$ and inequalities are true for large $n$.
We denote by $\operatorname{Geo}(\eta)$ and $\operatorname{Po}(\eta)$ the geometric and Poisson distribution with parameter $\eta$. For  an event $A$ by $\Ind_A$ we denote its indicator random variable.
Last but not least, for a sequence $(a_1,a_2,\ldots)$ and its two subsequences $(a_{i_1},\ldots,a_{i_b})$ and $(a_{j_1},\ldots,a_{j_c})$, $i_1<\ldots<i_b$, $j_1<\ldots<j_c$, we write $(a_{i_1},\ldots,a_{i_b})\subseq (a_{j_1},\ldots,a_{j_c})$ if $\{i_1,\ldots,i_b\}\subseteq\{j_1,\ldots,j_b\}$, i.e. when the first subsequence is a subsequence of the second one.

\section{Results}

In order to state our main result, we need one more definition. 
Let $Z_i\sim \operatorname{Geom}(1-q)$, $i=1,2,\ldots,\ell$, be independent random variables with the geometric distribution with parameter $1-q$. Them for $\emptyset\neq\cS\subseteq \bS_\ell$ we define
\begin{equation}\label{Eq:Def:LambdaS}\productO=\sum_{\sigma\in \cS}\prod_{j=1}^{\ell}\Pra{Z_j=\sigma(j)-|\sigma([j-1])\cap [\sigma(j)-1]|}.\end{equation}
This formula has a natural justification, which will be provided at the beginning of Section~\ref{Sec:Proof:Infinite} (see \eqref{Eq:Def:LambdaBis} and related discussion). Moreover we will prove in \eqref{Eq:Def:LambdaEll} that  
\begin{equation}\label{Eq:Def:Product}
	\product(\bS_\ell)=\prod_{j=1}^{\ell}(1-q^j).
\end{equation}

Now we are ready to formulate the first result of this article.

\begin{theorem}\label{Thm:Main:Expected}
	Let $\ell\ge 2$  be a constant integer, $\emptyset\neq \cS\subseteq \bS_\ell$, $\varPi_n\sim \Mallow(n,q)$, and
	$q=q(n)\in (0,1)$ be bounded away from $0$ by a constant and such that \[(1-q)n/\ln n\to \infty,\quad\text{ as }n\to\infty.\]
	Then
	\[
	\bE \Arr_{\cS}(\varPi_n) = (1+o(1)) \frac{\productO}{1-q^\ell}\frac{\prod_{i=1}^{\ell-1}(1-q^i)}{\prod_{i=1}^{\ell-1}(1-q^{\ell+i})}n.
	\]
	If moreover $1-q=o(1)$ then
	\[
	\bE \Arr_{\cS}(\varPi_n) = (1+o(1)) |\cS|\frac{[(\ell-1)!]^2}{(2\ell-1)!}n(1-q)^{\ell-1}. 
	\]	
\end{theorem}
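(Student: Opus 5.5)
The plan is to compute $\bE \Arr_{\cS}(\varPi_n)$ exactly as a weighted count of permutations and only then pass to asymptotics. By linearity,
\[
\bE \Arr_{\cS}(\varPi_n)=\sum_{k_1,k_2}\sum_{\sigma\in\cS}\Pra{\forall_{i\in[\ell]}\,\varPi_n(k_2+i)=k_1+\sigma(i)},
\]
where the outer sum runs over $0\le k_1,k_2\le n-\ell$. For a fixed $(\sigma,k_1,k_2)$, a permutation $\pi$ containing the arrangement is determined by $\sigma$ and by the permutation $\tau\in\bS_{n-\ell+1}$ obtained by contracting the block of positions $k_2+1,\dots,k_2+\ell$ (with values $k_1+1,\dots,k_1+\ell$) to a single special element. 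Its inversions decompose as
\[
\text{inv}(\pi)=\text{inv}(\sigma)+\text{inv}^{*}(\tau)+(\ell-1)\,J(\tau),
\]
where $\text{inv}^{*}(\tau)$ counts inversions of $\tau$ and $J(\tau)$ counts those inversions involving the special element. The factor $\ell-1$ appears because each such inversion is replicated $\ell$ times in $\pi$.

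First I check that this is consistent with \eqref{Eq:Def:LambdaS}. The quantity $\sigma(j)-|\sigma([j-1])\cap[\sigma(j)-1]|-1$ is the number of smaller values appearing after position $j$, so summing over $j$ gives $\text{inv}(\sigma)$. Hence
\[
\productO=(1-q)^\ell\sum_{\sigma\in\cS}q^{\text{inv}(\sigma)},
\]
and the $\sigma$-dependence factors out completely. It therefore remains to evaluate
\[
\frac{1}{a_{n,q}}\sum_{p,v}\ \sum_{\tau:\,\tau(p)=v} q^{\text{inv}(\tau)+(\ell-1)J(\tau)},
\qquad a_{n,q}=\prod_{i=1}^n\frac{1-q^i}{1-q}.
\]

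For the inner sum I use the Lehmer-code (sequential insertion) description of $\tau$. Write $x$ for the number of larger values before position $p$, so that $J=x+y$ with $y=v-p+x$ the number of smaller values after $p$. For $p$ in the bulk, i.e.\ at distance at least $C\ln n/(1-q)$ from both ends, the weight $q^{\ell x}$ on the special element's code, together with the induced constraint on $y$ and the codes of the remaining elements, should give a product of $q$-geometric sums. Summing over $v$, and equivalently over $x,y\ge 0$ with weights $q^{\ell(x+y)}$, should produce
\[
\frac{1}{(1-q)^{\ell}}\,\frac{1}{1-q^\ell}\,\frac{\prod_{i=1}^{\ell-1}(1-q^i)}{\prod_{i=1}^{\ell-1}(1-q^{\ell+i})}
\]
times a normalisation that cancels against $a_{n,q}$ up to the factor $1+o(1)$. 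I expect this identity to come from a $q$-binomial convolution of the form $\sum_{x}q^{\ell x}\binom{x+\ell-1}{\ell-1}_q$-type sums. Guessing and verifying this closed form is the main obstacle. I would first try to compute it through the infinite Mallows process, where the codes $Z_i$ are i.i.d.\ $\operatorname{Geo}(1-q)$. There the arrangement requires the $\ell$ values to be the next unused ones above $D$ unused smaller values, which gives the factor $q^{\ell D}$. One then needs a stationary computation of $\bE\sum q^{\ell D}\Ind\{\text{block free}\}$ over the hole configuration, and the specialisation $\cS=\bS_\ell$ combined with \eqref{Eq:Def:Product} serves as a check.

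The error terms are handled by the hypothesis $(1-q)n/\ln n\to\infty$. The boundary positions $p$ (and values $v$) within $O(\ln n/(1-q))=o(n)$ of the ends contribute $o(n)$ times the bulk density. Truncation effects of the geometric variables in the bulk are of size $q^{\Omega(\ln n/(1-q))}=n^{-\omega(1)}$. Together these yield the first formula, the factor $n$ coming from the $n-O(\ln n/(1-q))$ bulk positions. For the second formula, when $1-q\to0$, I substitute $1-q^i=(1+o(1))\,i(1-q)$ into every factor. This gives $\productO\sim(1-q)^\ell|\cS|$, $1-q^\ell\sim\ell(1-q)$, and
\[
\frac{\prod_{i<\ell}(1-q^i)}{\prod_{i<\ell}(1-q^{\ell+i})}\sim\frac{(\ell-1)!\,\ell!}{(2\ell-1)!}.
\]
Multiplying these out gives $|\cS|\,[(\ell-1)!]^2 n(1-q)^{\ell-1}/(2\ell-1)!$, as claimed.
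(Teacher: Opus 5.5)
\paragraph{Verdict.} There is a genuine gap: the reduction is correct, but the step that gives the constant is never carried out, and the boundary estimate is not justified.

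\paragraph{What is correct.} The contraction identity $\text{inv}(\pi)=\text{inv}(\sigma)+\text{inv}(\tau)+(\ell-1)J(\tau)$ holds. So does $\productO=(1-q)^\ell\sum_{\sigma\in\cS}q^{\text{inv}(\sigma)}$, so the $\sigma$-dependence factors out.

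\paragraph{Main gap: the bulk sum is never evaluated.} Evaluating the remaining sum is the whole content of the theorem, and you only assert it (``should produce'', ``the main obstacle''). The mechanism you describe is also not what happens. The weight of a pair $(x,y)$ is not $q^{\ell(x+y)}$ times a normalisation; summed freely that would give $(1-q^\ell)^{-2}$. It also carries an interaction between the entries to the left and to the right of the special element. Your displayed target has the wrong power as well: given your factorisation, the normalised per-position quantity must carry $(1-q)^{+\ell}$, not $(1-q)^{-\ell}$.

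Concretely, delete the special element and split the remaining $\rho\in\bS_{m-1}$, $m=n-\ell+1$, by the set of values in its first $p-1$ positions. The inversions across the cut contribute $q^{xy}$ and two Gaussian binomials, which gives the exact formula
\[
\sum_{\tau:\,\tau(p)=v}q^{\text{inv}(\tau)+(\ell-1)J(\tau)}=a_{p-1,q}\,a_{m-p,q}\sum_{\substack{x,y\ge 0\\ y-x=v-p}}\binom{v-1}{y}_q\binom{m-v}{x}_q\,q^{xy+\ell(x+y)}
\]
(with $a_{0,q}=1$). Write $(z;q)_N=\prod_{i=0}^{N-1}(1-zq^i)$. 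Then, for $p$ at distance at least $C\Delta$ from both ends:
\begin{enumerate}
\item $a_{p-1,q}a_{m-p,q}/a_{n,q}=(1+o(1))(1-q)^\ell(q;q)_\infty$, and $\binom{v-1}{y}_q\binom{m-v}{x}_q\approx((q;q)_x(q;q)_y)^{-1}$ in the relevant range.
\item Summing over $v$ frees $(x,y)$. Euler's identity $\sum_{y\ge0}z^y/(q;q)_y=1/(z;q)_\infty$ at $z=q^{x+\ell}$ collapses the double sum to $\sum_{x\ge0}q^{\ell x}\prod_{r=1}^{\ell-1}(1-q^{x+r})$. This is the $k\to\infty$ form of the sum in Lemma~\ref{Lem:AuxiliarySum}.
\item The $q$-binomial theorem $\sum_{x\ge0}\binom{x+\ell-1}{\ell-1}_qz^x=1/(z;q)_\ell$ at $z=q^\ell$ gives $\prod_{i=1}^{\ell-1}(1-q^i)/\prod_{i=0}^{\ell-1}(1-q^{\ell+i})$.
\end{enumerate}
None of these steps is in your proposal. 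Moreover, $1/(q;q)_y$ is exponentially large in $1/(1-q)$, so the approximations need relative error control, not the claim that truncation effects are $n^{-\omega(1)}$. With bulk width $C\Delta$ they are only $n^{-O(C)}$. For comparison, the paper reaches the same sum in the infinite process. Conditioning on $Z_1$ gives the recurrence \eqref{Eq:Qk:recurrence} for $\bB_k$, which avoids any stationary analysis of hole configurations, and Lemma~\ref{Lem:Qk:SumIProp} evaluates the result.

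\paragraph{Secondary gap: the boundary.} The hypothesis $(1-q)n/\ln n\to\infty$ does not give $\Delta=o(n(1-q)^{\ell-1})$; take e.g.\ $\ell=3$, $1-q=n^{-1/2}$. So the $O(\Delta)$ boundary positions cannot be bounded by $1$ each. You need a uniform per-position bound of order $\productO/(1-q^\ell)$. The paper gets this from Lemma~\ref{Lem:QkLEQ0} together with Theorem~\ref{Thm:Pinsky}, applied to a Mallows permutation of length $12\Delta+\ell$.

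In your finite setting, one way to get it is the sequential construction with independent truncated geometric codes $c_i\in\{0,\dots,n-i\}$:
\begin{enumerate}
\item Given the first $k$ entries, a free block of $\ell$ consecutive values preceded by $D$ free smaller values is filled by positions $k+1,\dots,k+\ell$ in pattern $\sigma$ with probability $q^{\ell D+\text{inv}(\sigma)}(1-q)^\ell\prod_{j=1}^{\ell}(1-q^{n-k-j+1})^{-1}$.
\item Distinct blocks have distinct $D$, so the per-position probability is at most $\frac{\productO}{1-q^\ell}\prod_{j=1}^{\ell}(1-q^{n-k-j+1})^{-1}$.
\item Positions near the right end are handled by the inversion-preserving symmetry $\pi\mapsto(i\mapsto n+1-\pi(n+1-i))$, which maps $\cS$ to a family with the same $\productO$.
\end{enumerate}
With both pieces supplied, your route would be a self-contained alternative that needs neither the coupling lemmas nor Pinsky's bound. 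As written, it does not establish the constant in the theorem.
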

Therefore by \eqref{Eq:Def:Product} we get the following corollary that improves the result from Theorem~\ref{Thm:Pinsky} (Theorem 1.1 \cite{Pinsky2022}). 
\begin{corollary}
	Let $\ell\ge 2$  be a constant integer, $\varPi_n\sim \Mallow(n,q)$, and
	$q=q(n)\in (0,1)$ be such that $1-q=o(1)$ and $(1-q)n/\ln n\to \infty$ as $n\to\infty$. Then
	\[
	\bE\Arr_{\bS_\ell}(\varPi_n) = (1+o(1)) \frac{\ell![(\ell-1)!]^2}{(2\ell-1)!}n(1-q)^{\ell-1}. 
	\]	
\end{corollary}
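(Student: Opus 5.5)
The corollary follows directly from Theorem~\ref{Thm:Main:Expected}: take $\cS=\bS_\ell$ and use $|\cS|=\ell!$. The second part of Theorem~\ref{Thm:Main:Expected} applies as soon as $1-q=o(1)$ and $(1-q)n/\ln n\to\infty$. The only hypothesis of the theorem not stated in the corollary is that $q$ is bounded away from $0$, and this is automatic here. Since $1-q=o(1)$, we have $q\to 1$, so $q\ge 1/2$ for all large $n$.

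Substituting $|\cS|=\ell!$ into the second asymptotic formula gives
\[
\bE\Arr_{\bS_\ell}(\varPi_n)=(1+o(1))\,\ell!\,\frac{[(\ell-1)!]^2}{(2\ell-1)!}\,n(1-q)^{\ell-1},
\]
which is exactly the claimed expression.

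As a consistency check, one can also derive the corollary from the first formula of the theorem together with \eqref{Eq:Def:Product}. In that case $\productO=\prod_{j=1}^{\ell}(1-q^j)$, so
\[
\frac{\productO}{1-q^\ell}\cdot\frac{\prod_{i=1}^{\ell-1}(1-q^i)}{\prod_{i=1}^{\ell-1}(1-q^{\ell+i})}
=\frac{\prod_{i=1}^{\ell-1}(1-q^i)^2}{\prod_{i=1}^{\ell-1}(1-q^{\ell+i})}.
\]
For $q\to1$ we have $1-q^m=(1+o(1))m(1-q)$ for each fixed $m$. The numerator is therefore $(1+o(1))[(\ell-1)!]^2(1-q)^{2\ell-2}$. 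The denominator is $(1+o(1))\frac{(2\ell-1)!}{\ell!}(1-q)^{\ell-1}$, since $\prod_{i=1}^{\ell-1}(\ell+i)=\frac{(2\ell-1)!}{\ell!}$. Their ratio is $\ell!\,[(\ell-1)!]^2(1-q)^{\ell-1}/(2\ell-1)!$, which agrees with the above.

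The only step that uses \eqref{Eq:Def:Product} is this check, and that identity is established later in the paper. If one relies solely on the second formula of Theorem~\ref{Thm:Main:Expected}, the argument is pure substitution.
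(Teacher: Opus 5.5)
Your proposal is correct and is essentially the paper's argument: the paper obtains the corollary by specializing Theorem~\ref{Thm:Main:Expected} to $\cS=\bS_\ell$, citing \eqref{Eq:Def:Product}, which is exactly your consistency check through the first formula. Your main route, substituting $|\cS|=\ell!$ into the second formula, is equally valid, and you correctly observe that the bounded-away-from-$0$ hypothesis holds automatically because $q\to 1$.
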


In the next result we analyse the distribution of $\Arr_{\cS}(\varPi_n)$ for $\ell\ge 3$ and values of $q$ such that $n(1-q)^{\ell-1}
$ is close to the threshold suggested by Corollary~\ref{Cor:Pinsky} (Corollary 1.2 \cite{Pinsky2022}). We find a range of $q$ for which the distribution of $\Arr_{\cS}(\varPi_n)$ is asymptotically Poisson.
For that we use the notion of the total variation distance. For random variables $X$ and $Y$ with non--negative integer values we have
\[\dtv(X,Y)=\max_{A\subseteq \NN_0}|\Pra{X\in A}-\Pra{Y\in A}|=\frac{1}{2}\sum_{k=0}^{\infty}|\Pra{X=k}-\Pra{Y=k}|.\] 
\begin{theorem}\label{Thm:Main:Poisson}
	Let $\ell\ge 3$ be a constant integer, $\emptyset\neq \cS\subseteq \bS_\ell$, $\varPi_n\sim \Mallow(n,q)$, and
	$q=q(n)\in (0,1)$ be such that 
	\begin{equation}\label{Eq:Poisson:q:condition}
		n^{-\frac{3}{2(\ell-1)}}=o(1-q)\quad\text{and}\quad 1-q=o\left((\ln n)^{-\frac{1}{\ell-2}}\right).
	\end{equation}
	Then, for $\lambda_n=\bE \Arr_\cS(\varPi_n)$ we have
	\[\dtv\left(\Arr_{\cS}(\varPi_n),\operatorname{Po}(\lambda_n)\right)=o(1).\]
\end{theorem}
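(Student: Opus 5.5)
We will prove Theorem~\ref{Thm:Main:Poisson} with the Stein--Chen method, using a coupling/dependency-graph version rather than one for bounded dependence. The tool is the sequential (Gnedin--Olshanski) construction of $\varPi_n\sim\Mallow(n,q)$ from independent truncated geometric variables $Z_1,\dots,Z_n$: $\varPi_n(i)$ is the $Z_i$-th smallest value not yet used. In this construction, whether an $(\cS,k_1,k_2)$--arrangement occurs depends on $Z_{k_2+1},\dots,Z_{k_2+\ell}$ and on the set of values used before time $k_2$. Since $q$ is close to $1$ but $(1-q)n/\ln n\to\infty$, we have $Z_i=O((1-q)^{-1}\ln n)$ with high probability. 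Hence $\varPi_n(i)$ differs from $i$ by at most $m:=C(1-q)^{-1}\ln n$, and the event is essentially determined by a window of $Z$'s of length $O(m)$ before $k_2$.

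\textbf{Step 1 (truncation and localisation).} Write $\Arr_\cS(\varPi_n)=\sum_{(k_1,k_2)}I_{k_1,k_2}$. Only pairs with $|k_1-k_2|\le m$ matter, up to an error $o(1)$ in expectation. For each such pair we replace $I_{k_1,k_2}$ by an indicator $J_{k_1,k_2}$ that depends only on $Z_{k_2-m},\dots,Z_{k_2+\ell}$. We obtain it by running the sequential construction locally from a "window start". This is exact on the high-probability event that no $Z_i$ with $i$ in the relevant range exceeds $m$ and that the values below $k_2-m$ are all used by time $k_2-m$. The total variation cost is $\sum\Pra{I\ne J}=o(1)$ by a union bound, using the tail $q^m=n^{-C}$.

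\textbf{Step 2 (dependency graph).} The indicators $J_{k_1,k_2}$ and $J_{k_1',k_2'}$ are independent when $|k_2-k_2'|>m+\ell$. Arratia--Goldstein--Gordon then gives
\[
\dtv\bigl(\textstyle\sum J,\operatorname{Po}(\lambda)\bigr)\le b_1+b_2,
\]
with $b_1=\sum_{|k_2-k_2'|\le m+\ell}\bE J\,\bE J'$ and $b_2=\sum_{|k_2-k_2'|\le m+\ell,\,(k_1,k_2)\ne(k_1',k_2')}\bE JJ'$. A single arrangement has probability $p=\Theta((1-q)^{\ell})$ for $|k_1-k_2|\le m$, because it requires $\ell$ geometric values to fall in a bounded set. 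So $b_1=O(n\,m^3p^2)=O(n(1-q)^{2\ell-3}\ln^3 n)$. Since $\lambda=\Theta(n(1-q)^{\ell-1})$ and $1-q=o((\ln n)^{-1/(\ell-2)})$, we get $b_1=\lambda\cdot O((1-q)^{\ell-2}\ln^3n)$. The precise exponents will need care; if the powers of $\ln n$ do not match, we will sharpen the window, since the relevant window for $k_1-k_2$ is really $O((1-q)^{-1})$ on average, not $m$.

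\textbf{Step 3 (the main obstacle: $b_2$).} Pairs with overlapping or nearby windows need three cases.
\begin{itemize}
\item[(i)] Overlapping positions, $|k_2-k_2'|<\ell$. These force a longer rigid block, so $\bE JJ'=O((1-q)^{\ell+s})$ with $s=|k_2-k_2'|\ge1$ extra constrained geometric values. Summed, this is $O(n(1-q)^{\ell-1}\cdot(1-q))$ times bounded combinatorial factors, which is $o(1)$ only if $n(1-q)^{\ell}\to0$.
\item[(ii)] For general $\lambda$ we will instead treat overlaps that merge into larger arrangements more carefully: either they are impossible (disjoint value sets of distinct windows) or they cost an extra factor $(1-q)$ per shifted position. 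This gives a contribution $O(\lambda(1-q))=o(1)$ provided $\lambda(1-q)\to0$.
\item[(iii)] Disjoint but near positions, $\ell\le|k_2-k_2'|\le m$. Here we condition on the first arrangement and show the second still costs $O((1-q)^{\ell-1})$ times the number of admissible $k_1'$. This yields $O(n\,m\,(1-q)^{2\ell-2})$.
\end{itemize}
The lower condition $n^{-3/(2(\ell-1))}=o(1-q)$ should enter exactly in balancing these overlap terms against $\lambda$: it keeps $\lambda$ large enough that the normalised errors vanish, while the upper condition kills the $\ln n$ factors from the window length. Verifying the conditional estimate in (iii), where the arrangement at $k_2$ perturbs the set of available values for later steps, is the delicate part. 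We will handle it by noting that a completed arrangement leaves the "available values" set locally identical to the unconditioned one shifted, so the sequential construction restarts with fresh geometrics.

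Finally, $\dtv(\operatorname{Po}(\bE\sum J),\operatorname{Po}(\lambda_n))\le|\bE\sum J-\lambda_n|=o(1)$ by Step 1, which completes the argument.
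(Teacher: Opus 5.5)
\textbf{The gap is in Step~1}, and the dependency graph of Step~2 rests entirely on it. Whether $I_{k_1,k_2}=1$ depends on the exact set $U_{k_2}$ of unused values around $k_1$. It enters through the rank $r=|U_{k_2}\cap[1,k_1]|$ and through the requirement $[k_1+1,k_1+\ell]\subseteq U_{k_2}$. So a local run started at $t=k_2-m$ reproduces $I_{k_1,k_2}$ only if it starts from the true configuration $U_t$.

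Your event ``all values below $t$ are used by time $t$'' would pin $U_t$ down, but it is not a high-probability event. Write $g_i=|U_i\cap[1,t-1]|$. The event forces $g$ to fall from $t-1$ to $0$ in $t$ steps, i.e.\ $Z_i\le g_{i-1}$ at all but one step. Its probability is therefore at most $\prod_{g=1}^{t-1}(1-q^g)$ times a polynomial in $(1-q)^{-1}$, which is $e^{-\Theta(1/(1-q))}\to0$; it is essentially a cut point $\varPi_n([t])=[t]$. The genuinely likely version (every value below $t-m$ is used by time $t$) does not rescue the argument. The holes of $U_t$ inside $[t-m,t+m]$ still depend on the $Z_i$ with $i<t$, which lie outside your window, and the condition that no $Z_i$ exceeds $m$ does nothing to erase that dependence. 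Hence $J\neq I$ with non-negligible probability, and the independence of the $J$'s at distance $>m+\ell$ is unsupported.

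What is missing is a forgetting mechanism. The paper gets it from the Gnedin--Olshanski property (Lemma~\ref{Lem:CoupligProcesses}): the rescaled blocks $\barbarPi([t_{i-1}+1,t_i])$ are independent Mallows sequences. It combines this with Lemmas~\ref{Lem:Inclusion0} and~\ref{Lem:Inclusion2} to show that, outside an event of probability $O(n^{-5})$, every indicator farther than $12\Delta+\ell$ from $k$ equals one computed in a block independent of the block containing $k$. This feeds Janson's coupling form of Stein--Chen.

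Inside your framework you could instead prove coalescence. Run two copies of the sorted unused-value list with the same $Z$'s. The set of indices where they differ never grows: the index equal to $Z$, if present, is deleted, and indices above $Z$ shift down by one. So its maximum $M$ drops with probability at least $1-q^M$ per step. Starting from $M\le m$, the set empties within $O(\ln n/(1-q))$ steps with probability $1-n^{-C}$. That would justify a $J$ built from a canonical start state at time $k_2-C'm$.

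Even with localisation repaired, two estimates need fixing.

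First, the bound $\dtv\le b_1+b_2$ must carry the factor $\min(1,\lambda^{-1})$. The hypotheses allow $\lambda(1-q)\to\infty$ (e.g.\ $\ell=3$, $1-q=(\ln n)^{-2}$), so the conditions $n(1-q)^\ell\to0$ and $\lambda(1-q)\to0$ you arrive at in (i)--(ii) are not available. With the factor, the overlap contribution is $O(1-q)$ in all cases.

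Second, compute $b_1$ and (iii) after summing over $k_1$. Let $\mathcal{F}_t$ be the $\sigma$-field of the first $t$ steps. Distinct admissible $k_1$ have distinct ranks $r$, each contributing at most $(1+o(1))q^{\ell r}\productO$. Hence, uniformly in the state,
\[
\sum_{k_1}\Pra{I_{k_1,t}=1\mid\mathcal{F}_t}\le(1+o(1))\productO/(1-q^\ell)=O((1-q)^{\ell-1}).
\]
This gives $\min(1,\lambda^{-1})\,b_1=O((1-q)^{\ell-2}\ln n)$, which is exactly what the upper condition kills; your $\ln^3 n$ is not covered by it. Applied at time $k_2'\ge k_2+\ell$, the same bound yields (iii) via the Markov property. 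It replaces the claim that the construction ``restarts with fresh geometrics'', which is false as stated because conditioning on the first arrangement biases the state.

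You must also handle the last $O(m)$ positions, where truncation of $Z_i$ inflates these probabilities. The paper trims $6\Delta$ at both ends at cost $O(\bE\Arr_{\bS_\ell}(\varPi_{12\Delta+\ell}))=O((1-q)^{\ell-2}\ln n)=o(1)$.

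Finally, the lower condition does not keep $\lambda$ large, since $\lambda\to0$ is allowed. In the paper it only absorbs the $O(n^{-5})$ coupling error after division by $\bB_k^2\asymp(1-q)^{2(\ell-1)}$.
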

Theorem~\ref{Thm:Main:Poisson}, Theorem~\ref{Thm:Main:Expected}, and properties of the total variation distance imply the following convergence result.
\begin{corollary}
	Let $\ell\ge 3$ be a constant, $\emptyset\neq \cS\subseteq \bS_\ell$, $\varPi_n\sim \Mallow(n,q)$, and
	$q=q(n)\in (0,1)$ be such that \[n(1-q)^{\ell-1}\to c,\quad\text{ as }n\to\infty,\]
	for some constant $c>0$. Then
	\[\Arr_{\cS}(\varPi_n) \stackrel{d}{\rightarrow} \operatorname{Po}\left(\frac{|\cS|[(\ell-1)!]^2}{(2\ell-1)!}c\right),\]
	where $d$ stands for the convergence in distribution.
\end{corollary}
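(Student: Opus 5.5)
The plan is to combine Theorem~\ref{Thm:Main:Poisson} with the second part of Theorem~\ref{Thm:Main:Expected}, and then to replace $\operatorname{Po}(\lambda_n)$ by the fixed limit $\operatorname{Po}(\lambda)$, where
\[\lambda=\frac{|\cS|[(\ell-1)!]^2}{(2\ell-1)!}c,\]
using a continuity bound for the Poisson family in total variation.

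\textbf{Step 1: the hypotheses of both theorems hold.} From $n(1-q)^{\ell-1}\to c>0$ we get $1-q=(1+o(1))c^{1/(\ell-1)}n^{-1/(\ell-1)}$. In particular $1-q=o(1)$, so $q$ is bounded away from $0$. Since $\ell\ge 3$, we have $1-1/(\ell-1)>0$, and hence
\[\frac{(1-q)n}{\ln n}=\Theta\!\left(\frac{n^{1-1/(\ell-1)}}{\ln n}\right)\to\infty.\]
So Theorem~\ref{Thm:Main:Expected} applies, including its second statement. For \eqref{Eq:Poisson:q:condition}, note that $n^{-3/(2(\ell-1))}=o(n^{-1/(\ell-1)})=o(1-q)$. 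Also $n^{-1/(\ell-1)}=o\bigl((\ln n)^{-1/(\ell-2)}\bigr)$, because a negative power of $n$ decays faster than any power of $\ln n$. Hence Theorem~\ref{Thm:Main:Poisson} applies as well.

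\textbf{Step 2: the Poisson parameter converges.} By Theorem~\ref{Thm:Main:Expected},
\[\lambda_n=\bE\Arr_\cS(\varPi_n)=(1+o(1))\frac{|\cS|[(\ell-1)!]^2}{(2\ell-1)!}n(1-q)^{\ell-1}\to\lambda.\]

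\textbf{Step 3: total variation between Poisson laws.} We use the standard bound $\dtv(\operatorname{Po}(a),\operatorname{Po}(b))\le |a-b|$. To see it, for $a<b$ realise $\operatorname{Po}(b)$ as $X+Y$ with $X\sim\operatorname{Po}(a)$ and $Y\sim\operatorname{Po}(b-a)$ independent. Coupling then gives a distance of at most $\Pra{Y\neq 0}=1-e^{-(b-a)}\le b-a$. Therefore $\dtv(\operatorname{Po}(\lambda_n),\operatorname{Po}(\lambda))\to 0$.

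\textbf{Step 4: conclusion.} By the triangle inequality for $\dtv$ and Theorem~\ref{Thm:Main:Poisson},
\[\dtv\bigl(\Arr_\cS(\varPi_n),\operatorname{Po}(\lambda)\bigr)\le o(1)+|\lambda_n-\lambda|=o(1).\]
Convergence in total variation implies $\Pra{\Arr_\cS(\varPi_n)=k}\to\Pra{\operatorname{Po}(\lambda)=k}$ for every $k$, which is convergence in distribution.

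The only step needing any care is Step 1, namely checking that this critical window lies strictly inside the range \eqref{Eq:Poisson:q:condition}. That check is immediate for $\ell\ge3$.
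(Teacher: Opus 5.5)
Your proof is correct and follows the paper's route exactly. The paper states only that the corollary follows from Theorem~\ref{Thm:Main:Poisson}, the second part of Theorem~\ref{Thm:Main:Expected}, and properties of total variation distance. You supply the routine checks it leaves implicit: that the window $n(1-q)^{\ell-1}\to c$ satisfies both theorems' hypotheses when $\ell\ge 3$, and the bound $\dtv(\operatorname{Po}(a),\operatorname{Po}(b))\le |a-b|$.
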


\begin{figure}[htbp]
	\centering
	\begin{tikzpicture}[xscale=.7]
		\foreach \j in {0,...,5}
		{
			\foreach \i in {1,...,12}
			{
				\draw (\i-0.4,1.4*\j)--(\i+0.4,1.4*\j);
				\node at (\i,1.4*\j-0.2) {{\footnotesize \i}};
		}}
		\foreach \i/\j in {1/3,2/4,3/2,4/1,5/9}
		{\foreach \k in {\i,...,5}
			\node at (\j,1.4*6-1.4*\k+.3) {\i};
		}
		\foreach \i/\j in {1/3,2/3,3/2,4/1,5/5}
		{
			\node at (-1.3,1.4*6-1.4*\i+0.2) {$Z_{\i}={\j}$};
		}
		\foreach \i/\j in {1/3,2/4,3/2,4/1,5/9,6/10,7/5,8/12,10/6,11/8,13/7,15/11}
		{\node at (\j,0.2) {\i};}
		\node at (-1.3,0) {and so on ...};
		\node at (13.5,0) {........};
	\end{tikzpicture}
	\caption{\label{Fig:Def:Mallows} Example of a Mallows infinite sequence $\barPi$.}
\end{figure}

In the proofs we will use an extension of the Mallows model to a random function ${\mathbf \Pi} : \bN\to\bN$, called the 
{\em Mallows process} and we will write ${\mathbf \Pi}\sim \Mallow(\NN,q)$ to denote its distribution with parameter $q\in (0,1)$. The process was considered by Gnedin and Olshanski \cite{GnedinOlshanski2010,GnedinOlshanski2012} and further utilised, among others, by Basu  and Bhatnagar  \cite{BasuBhat17} and He, M{\"{u}}ller, and Verstraaten \cite{HeMV23}. It will be more convenient for us to analyse the related sequences of preimages of the Mallows process. We will denote the random sequences related to ${\mathbf \Pi}\sim \Mallow(\NN,q)$ by 
\[\barPi=(\barPi_1,\barPi_2,\ldots)=({\mathbf \Pi}^{-1}(1),{\mathbf \Pi}^{-1}(2),\ldots).\]
We will call it the {\sl Mallows infinite sequence} and denote its distribution by $\Mallows(\NN,q)$.
The sequence $\barPi\sim \Mallows(\NN,q)$ is constructed in a recursive manner (see Figure~\ref{Fig:Def:Mallows}). 

We label the spots on a horizontal line by the consecutive natural numbers. Then we form a random sequence $\barPi$ in the following manner.  
\begin{itemize}
	\item[] For all $i\in \NN$, when numbers from  $[i-1]$ have been placed on the line, we sample $Z_i\sim\operatorname{Geo}(1-q)$ according to the geometric distribution, independently of previous $Z_1,\ldots,Z_{i-1}$. If $Z_i=k$ then we place $i$ in the $k$-th unoccupied spot on the line.
\end{itemize}
The Mallows process  ${\mathbf \Pi}\sim\Mallow(\NN,q)$, ${\mathbf \Pi} : \bN\to\bN$, related to the Mallows infinite sequence $\barPi\sim\Mallows(\NN,q)$ is given by the formula ${\mathbf \Pi}(i)=j$ iff $\barPi_j=i$, where $\barPi_j$ is the $j$--th term of the sequence $\barPi$. 
For the Mallows infinite process ${\mathbf \Pi}\sim\Mallow(\bN,q)$ and sequence $\barPi\sim \Mallows(\NN,q)$ we define $(\cS,k_1,k_2)$--arrangement in an analogous manner as for $\varPi_n\sim \Mallow(n,q)$ (see Definition~\ref{Def:Sarr}).
\begin{definition}\label{Def:Sarr:barPi}
	Let $\ell\ge 2$, $n\in\bN$, $\emptyset\neq\cS\subseteq \bS_\ell$ and $k_1$, $k_2$ be nonnegative integers. We say that there is an $(\cS,k_1,k_2)$--arrangement in ${\mathbf \Pi}:\bN\to \bN$ if
	\begin{equation*}
		\exists_{\sigma\in\cS}\forall_{i\in [\ell]} {\mathbf \Pi}(k_2+i)=k_1+\sigma(i).
	\end{equation*}
	i.e. there is na $(\cS,k_1,k_2)$--arrangement in $\barPi:\bN\to\bN$ if
	\begin{equation}\label{Eq:SArr:barPi}
		\exists_{\sigma\in\cS}\forall_{i\in [\ell]} \barPi_{k_1+i}=k_2+\sigma^{-1}(i).
	\end{equation}
	We call any  $(\cS,k_1,k_2)$--arrangement in ${\mathbf \Pi}$ or $\barPi$ an  $\cS$--arrangement.
\end{definition}

For  $1\le t_1 \le t_2$ we define
\begin{equation}\label{Eq:Def:barPi:Obciety}
	\barPi|_{[t_1,t_2]}=(\barPi_{t_1},\barPi_{t_1+1},\ldots,\barPi_{t_2}).
\end{equation}
In other words the subsequence $\barPi|_{[t_1,t_2]}$ consists of terms of $\barPi$ with indices from $t_1$ to $t_2$. 
Denote by $\Arr_{\cS}(\barPi|_{[t_1,t_2]})$ the number of $\cS$--arrangements in $\barPi|_{[t_1,t_2]}$.

The following theorems will be the main ingredient of the proof of Theorem~\ref{Thm:Main:Expected}.

\begin{theorem}\label{Thm:ExpectedValueProcess}
	Let $\ell\ge 2$ be a constant, $\emptyset\neq\cS\subseteq \bS_\ell$, and $q=q(n)\in (0,1)$ and $t=t(n)\in \NN$ be such that $(1-q)t/\ln n\to \infty$ and $t\to\infty$ as $n\to\infty$. Then
	\[
	\bE \Arr_{\cS}(\barPi|_{[t]}) =(1+o(1)) \frac{\productO}{1-q^\ell}\frac{\prod_{i=1}^{\ell-1}(1-q^i)}{\prod_{i=1}^{\ell-1}(1-q^{\ell+i})}t,
	\]
	where $\productO$ is defined in \eqref{Eq:Def:Product}.
	
	Moreover if $1-q=o(1)$ then 
	\[
	\bE \Arr_{\cS}(\barPi|_{[t]}) =(1+o(1)) |\cS|(1-q)^{\ell-1}\frac{[(\ell-1)!]^2}{(2\ell-1)!}t. 
	\]
\end{theorem}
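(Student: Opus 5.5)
The plan is to write $\Arr_{\cS}(\barPi|_{[t]})$ as a sum of indicators over $k_1$ with $k_1+\ell\le t$ and over $k_2\ge 0$, and to compute each probability from the sequential construction with the geometric variables $Z_i$. Condition on the configuration of occupied spots after $1,\ldots,k_1$ have been placed; this configuration is measurable with respect to $Z_1,\ldots,Z_{k_1}$. By \eqref{Eq:SArr:barPi}, an $(\cS,k_1,k_2)$--arrangement requires two things:
\begin{enumerate}[(i)]
\item the spots $k_2+1,\ldots,k_2+\ell$ are still unoccupied at that moment, say they are the $m$-th up to the $(m+\ell-1)$-th free spots;
\item the numbers $k_1+1,\ldots,k_1+\ell$ fill these spots in the order prescribed by some $\sigma\in\cS$.
\end{enumerate}
When $k_1+j$ is placed, the free spots of the block below its target are those whose $\sigma$-value is smaller and has not yet been used. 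Hence (ii) means $Z_{k_1+j}=m-1+\sigma(j)-|\sigma([j-1])\cap[\sigma(j)-1]|$ for every $j\in[\ell]$. Since $\Pra{Z=m-1+r}=q^{m-1}\Pra{Z=r}$ and the $Z$'s are independent, summing over $\sigma\in\cS$ gives
\[
\bE\Arr_{\cS}(\barPi|_{[t]})=\productO\sum_{k_1=0}^{t-\ell}\sum_{m\ge1}q^{\ell(m-1)}p_{k_1}(m),
\]
where $p_{k}(m)$ is the probability that, after $k$ placements, the free spots numbered $m,\ldots,m+\ell-1$ are consecutive integers. This already explains the factor $\productO$ and isolates a quantity $G_k=\sum_m q^{\ell(m-1)}p_k(m)$ that does not depend on $\cS$.

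The next step is to show that $G_k\to G:=\frac{1}{1-q^\ell}\prod_{i=1}^{\ell-1}\frac{1-q^i}{1-q^{\ell+i}}$ uniformly for $k\ge k_0:=C\ln n/(1-q)$. The gap sequence between consecutive free spots evolves as a Markov chain: placing a number at the $Z$-th free spot deletes that hole and merges two gaps. The gaps among the first $M$ holes depend on the past only through the last time a placement landed beyond hole $M$. At each step this happens with probability $q^{M}$, independently of the past, so one gets a coupling with a stationary version (e.g.\ the two-sided process of Gnedin--Olshanski), with error $O((1-q^M)^k)$ in the first $M$ gaps. Taking $M\approx C'\ln n/(1-q)$ kills the tail $\sum_{m>M}q^{\ell(m-1)}$. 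For $k\ge k_0$ this gives $|G_k-G|=o(G)$, because $(1-q^M)^{k}$ is polynomially small in $n$ while $G=\Theta((1-q)^{-1})$ at worst. The terms with $k_1<k_0$ contribute at most $k_0\cdot O(G)=o(tG)$ because $(1-q)t/\ln n\to\infty$. Summing gives $(1+o(1))\productO\,G\,t$.

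To identify the stationary constant $G$, I would avoid computing the stationary gap law directly. Instead I would use the case $\cS=\bS_\ell$ together with \eqref{Eq:Def:Product}, so that $G=\lim_k\Pra{\text{values }k+1,\ldots,k+\ell\text{ occupy }\ell\text{ consecutive positions}}/\prod_{j=1}^{\ell}(1-q^j)$. This probability can then be computed in a finite Mallows permutation via the symmetry $\Pi\mapsto\Pi^{-1}$, i.e.\ preimages versus images. Alternatively, it can be computed through the $q$-multinomial generating function for inversions between the block of consecutive positions and the rest, which produces exactly the Gaussian-binomial-type ratio $\prod_{i=1}^{\ell-1}(1-q^i)/(1-q^{\ell+i})$. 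I expect this identification of $G$ to be the main obstacle, and I would first check it by hand for $\ell=2$.

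Finally, the case $1-q=o(1)$ is a computation. We have $\productO=(1+o(1))|\cS|(1-q)^\ell$, since each factor $\Pra{Z_j=r}=(1-q)q^{r-1}$ with $r\le\ell$. Moreover $1-q^\ell\sim\ell(1-q)$ and $\prod_{i=1}^{\ell-1}\frac{1-q^i}{1-q^{\ell+i}}\to\prod_{i=1}^{\ell-1}\frac{i}{\ell+i}=\frac{(\ell-1)!\,\ell!}{(2\ell-1)!}$. Multiplying these gives $|\cS|(1-q)^{\ell-1}\frac{[(\ell-1)!]^2}{(2\ell-1)!}t$.
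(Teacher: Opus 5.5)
Your reduction of the expectation to $\productO\sum_k G_k$, with a factor $G_k$ that does not depend on $\cS$, is fine, and so is your final computation for $1-q=o(1)$. The two steps that carry the content of the theorem, however, do not go through.

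\emph{The convergence $G_k\to G$ fails as argued.} A step with $Z>M$ leaves the first $M$ gaps exactly as they were, so it cannot decouple them from the past. Meanwhile, every step with $Z\le M$ pulls the $(M+1)$-st hole into the window, and its position reflects the earlier history. So there is no regeneration at the times you describe. Driving two gap configurations by the same $Z$'s just merges the same blocks of initial gaps, so they do not coalesce.

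\emph{The rate is also reversed.} With $M\approx C'\ln n/(1-q)$ one has $q^M\le e^{-(1-q)M}\approx n^{-C'}$. Hence $(1-q^M)^k\ge 1-kq^M$ is close to $1$, not polynomially small, for all $k=o(n^{C'})$.

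\emph{The value of $G$, which is the entire statement, is never derived.} You list possible routes and call it the main obstacle yourself.

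The missing idea is an exact first-step analysis in the position index, which makes any mixing argument unnecessary. Let $\bB_k$ be the probability that spots $k+1,\dots,k+\ell$ carry an $\cS$-arrangement, and condition on $Z_1$:
\begin{itemize}
\item If $Z_1\le k$ (probability $1-q^k$), the block becomes free spots $k,\dots,k+\ell-1$ of a fresh Mallows process. Values shift by one, so consecutiveness is preserved, and this gives $\bB_{k-1}$.
\item If $Z_1\in[k+1,k+\ell]$, the block must be filled by $1,\dots,\ell$, which has probability $q^{k\ell}\productO$.
\item If $Z_1>k+\ell$ (probability $q^{k+\ell}$), the situation repeats.
\end{itemize}
This yields \eqref{Eq:Qk:recurrence} and its closed form \eqref{Eq:Qk:Sum0}. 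Once $q^{k-\ell}\le n^{-1}$, this equals $(1+O(n^{-1}))\productO\sum_{j=0}^{k-\ell+1}q^{\ell j}\prod_{r=1}^{\ell-1}(1-q^{j+r})+O(\productO n^{-\ell})$. Expanding the product and summing the geometric series reduces this to the telescoping identity of Lemma~\ref{Lem:Qk:SumIProp}, which produces the constant. For $k<\ell-\log_q n=o(t)$ one uses the uniform bound $\bB_k\le\bB_0$ (Lemma~\ref{Lem:QkLEQ0}). Your ``$k_0\cdot O(G)$'' step tacitly needs such a bound too.

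Finally, you have the roles of $k_1$ and $k_2$ swapped. By \eqref{Eq:SArr:barPi}, $\barPi_{k_1+i}$ is the number sitting in spot $k_1+i$, so $\barPi|_{[t]}$ restricts positions, whereas your sum restricts values. Your identity for $\bE\Arr_{\cS}(\barPi|_{[t]})$ is nevertheless correct; indeed $\bB_k=\productO G_k$ for every $k$. But this holds only because ${\mathbf \Pi}^{-1}$ has the same law as ${\mathbf \Pi}$ and $\product(\{\sigma\})=(1-q)^\ell q^{\text{inv}(\sigma)}=\product(\{\sigma^{-1}\})$. You need to state and justify both facts.
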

Let
\[\bB_{k}=\Pra{\exists_{j\in\NN_0 }\text{ there is an $(\cS,j,k)$--arranegement in $\barPi$}}.\]
In the proof of Theorem~\ref{Thm:ExpectedValueProcess} we will use the following result.
\begin{theorem}\label{Thm:Qk:value}
	Let $\ell\ge 2$ be a constant, $\emptyset\neq\cS\subseteq \bS_\ell$, $q=q(n)\in (0,1)$, and $t=t(n)\in \NN$ be such that $(1-q)t/\ln n\to \infty$. Then for $k=k(n)\ge \ell -\log_q n$
	\begin{equation*}
		\begin{split}
			\bB_{k}
			=(1+O(n^{-1}))
			\frac{\productO}{1-q^\ell}
			\prod_{i=1}^{\ell-1}\frac{(1-q^i)}{(1-q^{\ell+i})}+O(\productO n^{-\ell}),
		\end{split}
	\end{equation*}
	where $\productO$ is defined in \eqref{Eq:Def:Product}.
	
	Moreover for $1-q=o(1)$ we have
	\begin{equation*}
		\begin{split}
			\bB_{k}=(1+O(n^{-1}+O(1-q)))\frac{\productO}{1-q}\frac{(\ell-1)!}{(2\ell-1)!}
			=(1+o(1))\frac{|\cS|[(\ell-1)!]^2}{(2\ell-1)!}(1-q)^{\ell-1}.
		\end{split}
	\end{equation*}
\end{theorem}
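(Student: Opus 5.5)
The plan is to compute $\bB_k$ exactly, by conditioning on the moment at which the block of spots $k+1,\ldots,k+\ell$ starts to be filled in the construction of $\barPi$, and then to evaluate the resulting series with a $q$-series identity.

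\textbf{Step 1: decomposition by the time $j$.} By \eqref{Eq:SArr:barPi}, an $(\cS,j,k)$--arrangement means that the numbers $j+1,\ldots,j+\ell$ are placed, in this order, exactly into the spots $k+1,\ldots,k+\ell$, following some $\sigma\in\cS$. This forces the whole block $\{k+1,\ldots,k+\ell\}$ to be unoccupied after $[j]$ has been placed, and fully occupied after $[j+\ell]$. So the events for different $j$ are disjoint, and also disjoint for different $\sigma$.

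Fix $j$, and suppose that after placing $[j]$ the block is empty and there are $m$ empty spots among $[k]$. The $i$-th new number must go to spot $k+\sigma^{-1}(i)$. This is the $\bigl(m+a_i(\sigma)\bigr)$-th unoccupied spot, where $a_i(\sigma)$ is the rank of that spot among the block spots still free. Since $\Pra{Z=m+a}=q^m\Pra{Z=a}$ for $Z\sim\operatorname{Geo}(1-q)$, the conditional probability is $q^{\ell m}$ times the $\sigma$-summand of \eqref{Eq:Def:LambdaS}, once the indices are matched up via the inverse permutation (checking this matching is routine bookkeeping). Summing over $\sigma$ gives
\[
\bB_k=\productO\sum_{j\ge0}\bE\bigl[q^{\ell M_j}\,\Ind_{\{\text{block empty at time }j\}}\bigr],
\]
where $M_j$ is the number of empty spots in $[k]$ at time $j$.

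\textbf{Step 2: the Markov chain.} While the block is empty, $M$ evolves as a Markov chain started at $M_0=k$. From state $m$, the next number does one of three things:
\begin{itemize}
\item[] it lands in $[k]$ with probability $1-q^m$, and then $m\mapsto m-1$;
\item[] it lands in the block with probability $q^m(1-q^\ell)$, which kills the chain;
\item[] it lands beyond the block with probability $q^{m+\ell}$, and then $m$ is unchanged.
\end{itemize}
The probability of ever reaching state $m$ is $\prod_{r=m+1}^{k}\frac{1-q^r}{1-q^{r+\ell}}$. The expected number of steps spent there is $1/(1-q^{m+\ell})$. Hence
\[
\bB_k=\productO\sum_{m=0}^{k}\frac{q^{\ell m}}{1-q^{m+\ell}}\prod_{r=m+1}^{k}\frac{1-q^r}{1-q^{r+\ell}}.
\]

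\textbf{Step 3: passing to infinite products.} Since $k\ge \ell-\log_q n$ we have $q^k\le q^\ell/n$. Replacing $\prod_{r=m+1}^{k}$ by $\prod_{r>m}$ costs a factor $1+O(n^{-1})$. The tail $m>k$, which we add to complete the series, is $O(q^{\ell k})=O(n^{-\ell})$ relative to $\productO$.

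With the infinite product, $\prod_{r>m}\frac{1-q^r}{1-q^{r+\ell}}=\prod_{r=m+1}^{m+\ell}(1-q^r)$. The series therefore becomes
\[
\sum_{m\ge0}q^{\ell m}\prod_{r=m+1}^{m+\ell-1}(1-q^r).
\]
This is a Jackson $q$-integral $\int_0^1x^{\ell-1}(qx;q)_{\ell-1}\,d_qx$, i.e.\ a $q$-Beta value. The identity to prove is
\[
\sum_{m\ge0}q^{\ell m}\prod_{r=m+1}^{m+\ell-1}(1-q^r)=\frac{1}{1-q^\ell}\prod_{i=1}^{\ell-1}\frac{1-q^i}{1-q^{\ell+i}}.
\]
I expect this identity to be the main obstacle. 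I would first try induction on the product length: define
\[
F(a,b)=\sum_{m\ge0}q^{am}\prod_{r=m+1}^{m+b}(1-q^r),
\]
and establish the recursion $F(a,b)=\frac{1-q^b}{1-q^{a+b}}F(a,b-1)$. This should follow from splitting the top factor as $1-q^{m+b}=(1-q^b)+q^b(1-q^m)$ and shifting the summation index. The base case is $F(a,0)=1/(1-q^a)$. Alternatively, the $q$-Beta formula can be cited directly.

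\textbf{Step 4: the case $1-q=o(1)$.} Each factor in $\productO$ equals $(1-q)q^{a-1}$ with $a\le\ell$ bounded, so $\productO=(1+O(1-q))|\cS|(1-q)^\ell$. Moreover $1-q^i=(1+O(1-q))\,i(1-q)$ for bounded $i$. Hence
\[
\prod_{i=1}^{\ell-1}\frac{1-q^i}{1-q^{\ell+i}}=(1+O(1-q))\frac{(\ell-1)!\,\ell!}{(2\ell-1)!},
\qquad
\frac{1}{1-q^\ell}=\frac{1+O(1-q)}{\ell(1-q)}.
\]
Combining these gives both displayed forms of the second claim.
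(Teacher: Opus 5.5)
Your proof is correct. Your Step 2 formula is exactly the paper's \eqref{Eq:Qk:Sum0} (your $m$ is the paper's $i$). You differ in how you reach it and, mainly, in how you evaluate the resulting $q$-series. The paper obtains \eqref{Eq:Qk:Sum0} from the first-step recurrence \eqref{Eq:Qk:recurrence} plus an induction; you read it off as the occupation measure of the killed chain $M$, which is the same first-step analysis organised more transparently. The paper then splits \eqref{Eq:Qk:Sum01} at $k-\ell+1$, expands $\prod_r(1-q^{j+r})$ by inclusion--exclusion into the sums $\sumI^{(\ell)}_i$, and proves the alternating identity of Lemma~\ref{Lem:Qk:SumIProp} by an $(\ell-1)$-step induction with three auxiliary recurrences. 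You instead telescope against the infinite product, which is valid for every $m\le k$, so no split is needed. You then evaluate $F(\ell,\ell-1)$ through $F(a,b)=\frac{1-q^b}{1-q^{a+b}}F(a,b-1)$. This recursion does follow in one line from $1-q^{m+b}=(1-q^b)+q^b(1-q^m)$ and the shift $m\mapsto m+1$; the $m=0$ term vanishes because of the factor $1-q^0$. Your route is much shorter and explains the constant: the series equals $(1-q)^{-1}B_q(\ell,\ell)$, whose $q\to1$ behaviour gives the Beta value $B(\ell,\ell)=[(\ell-1)!]^2/(2\ell-1)!$. Mind the factor $(1-q)^{-1}$ relative to the Jackson integral as you wrote it, in case you cite the $q$-Beta formula directly. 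The paper's route, by contrast, is purely mechanical and needs no recognition of a $q$-Beta integral.

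A few small points.
\begin{enumerate}
\item The tail $\sum_{m>k}$ is bounded by $q^{\ell(k+1)}/(1-q^\ell)$, not $O(q^{\ell k})$, so for $q\to1$ it is not $O(\productO n^{-\ell})$ in absolute terms. Compare it with the full series instead. That series is at least $\frac{1}{1-q^\ell}\prod_{i=1}^{\ell-1}\frac{i}{\ell+i}$, since $\frac{1-q^i}{1-q^{\ell+i}}=\frac{1+\dots+q^{i-1}}{1+\dots+q^{\ell+i-1}}\ge\frac{i}{\ell+i}$. The tail is then a relative $O(n^{-\ell})$ error, absorbed into $1+O(n^{-1})$; you even get the statement without the additive term.
\item Your Step~4 produces $\frac{\productO}{1-q}\frac{[(\ell-1)!]^2}{(2\ell-1)!}$, matching the paper's own \eqref{Eq:q:tendsto1}. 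The unsquared $(\ell-1)!$ in the first displayed form of the statement is a typo, so that form cannot literally be derived, contrary to your ``both displayed forms''.
\item The $\sigma$ versus $\sigma^{-1}$ matching you left open is harmless. The $\sigma$-summand of \eqref{Eq:Def:LambdaS} equals $q^{\text{inv}(\sigma)}(1-q)^\ell$, and $\text{inv}(\sigma)=\text{inv}(\sigma^{-1})$.
\end{enumerate}
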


\section{Relations between Mallows permutations and the Mallows infinite process}

The results in this section are crucial to the proofs of Theorem~\ref{Thm:Main:Expected} and Theorem~\ref{Thm:Main:Poisson} since they serve as a link between the results concerning $\barPi\sim\Mallows(\NN,q)$  and those concerning $\varPi_n\sim\Mallow(n,q)$.

\subsection{Mallows sequence}

Due to the method of the proof, it will be convenient to consider,   instead the Mallows permutation $\varPi_n\sim \Mallow(n,q)$, its sequence of preimages, that is 
\[\barpi_n := (\barpi_{n,1},\ldots,\barpi_{n,n}),\text{ where }\ \barpi_{n,i}=\varPi_n^{-1}(i),\ i\in [n],\text{ for }\varPi_n\sim \Mallow(n,q).\]
We will denote by
$\Mallows(n,q)
$
the distribution of the sequence of preimages of $\varPi_n\sim \Mallow(n,q)$, and we will write  $\barpi_n\sim \Mallows(n,q)$. Note that,  there is a natural, straight forward one--to--one correspondence between $\cS$--arrangements  in $\varPi_n\sim \Mallow(n,q)$ and arrangements of consecutive elements  in $\barpi_n\sim\Mallows(n,q)$.  
\begin{remark}
	For $\emptyset\neq \cS\subseteq \bS_\ell$, $0\le k_1\le n-\ell$, $0\le k_2\le n-\ell$, there is an $(\cS,k_1,k_2)$--arrangement in a permutation $\varPi_n: [n]\to [n]$ (see Definition~\ref{Def:Sarr}) if and only if 
	\begin{equation}\label{Def:Sarr:BarPi}
		\exists_{\sigma\in\cS}\forall_{i\in [\ell]} \barpi_{n,k_1+i}=k_2+\sigma^{-1}(i).
	\end{equation}	
	See Definition~\ref{Def:Sarr:barPi} for comparison.
	We will say that there is an $(\cS,k_1,k_2)$--arrangement in $\barpi_n$ if \eqref{Def:Sarr:BarPi} is fulfilled. For any $0\le k_1\le n-\ell$, $0\le k_2\le n-\ell$, we call any $(\cS,k_1,k_2)$--arrangement in $\barpi_n$ an  $\cS$--arrangement and we denote  the number of $\cS$--arrangements in $\barpi_n$ by $\Arr_{\cS}(\barpi_n)$.  
\end{remark}
This remarks imply the following fact
\begin{fact}\label{Rem:Perm:Seq}
	The statements of Theorem~\ref{Thm:Main:Expected} and~Theorem~\ref{Thm:Main:Poisson} are equivalent to the ones with $\varPi_n\sim\Mallow(n,q)$ replaced by $\barpi_n\sim\Mallows(n,q)$.
\end{fact}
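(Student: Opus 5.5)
The statement is Fact~\ref{Rem:Perm:Seq}. The plan is to show that the bijection $\varPi_n\mapsto\barpi_n$, given by $\barpi_{n,i}=\varPi_n^{-1}(i)$, carries $\cS$--arrangements of $\varPi_n$ one-to-one onto $\cS$--arrangements of $\barpi_n$, so that $\Arr_\cS(\varPi_n)=\Arr_\cS(\barpi_n)$ pointwise. Once this holds, the two random variables have the same distribution. Their expectations then coincide, and so do their total variation distances to any fixed Poisson law. Hence the conclusions of Theorem~\ref{Thm:Main:Expected} and Theorem~\ref{Thm:Main:Poisson} for $\varPi_n$ are literally the same statements as the corresponding ones for $\barpi_n\sim\Mallows(n,q)$. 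The sequence $\barpi_n\sim\Mallows(n,q)$ is, by definition, the pushforward of $\Mallow(n,q)$ under inversion, so no distributional computation is needed.

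The key step is the equivalence in the preceding Remark, for fixed $k_1,k_2$. Suppose there is $\sigma\in\cS$ with $\varPi_n(k_2+i)=k_1+\sigma(i)$ for all $i\in[\ell]$. Substitute $i=\sigma^{-1}(j)$, which is legitimate because $\sigma$ is a bijection of $[\ell]$. This gives $\varPi_n(k_2+\sigma^{-1}(j))=k_1+j$, that is, $\barpi_{n,k_1+j}=\varPi_n^{-1}(k_1+j)=k_2+\sigma^{-1}(j)$ for all $j\in[\ell]$, which is \eqref{Def:Sarr:BarPi}. Each step reverses, so the converse implication holds with the same $\sigma$.

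It follows that for every admissible pair $(k_1,k_2)$, with $0\le k_1,k_2\le n-\ell$, the indicator of an $(\cS,k_1,k_2)$--arrangement in $\varPi_n$ equals the indicator of one in $\barpi_n$. Summing over all such pairs gives $\Arr_\cS(\varPi_n)=\Arr_\cS(\barpi_n)$.

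No real obstacle is expected. The only point needing care is the index bookkeeping: $k_1$ indexes values in $\varPi_n$ but positions in $\barpi_n$, while $k_2$ does the reverse. The set $\cS$ must also not be replaced by $\cS^{-1}$; the definition in \eqref{Def:Sarr:BarPi} already builds in $\sigma^{-1}$, which is what makes the counts match for the same family $\cS$.
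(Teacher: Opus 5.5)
Your proposal is correct and follows essentially the paper's route. The paper derives the Fact directly from the preceding Remark: $(\cS,k_1,k_2)$--arrangements in $\varPi_n$ correspond one-to-one to those in $\barpi_n$ defined via \eqref{Def:Sarr:BarPi}, which gives the pointwise identity $\Arr_{\cS}(\varPi_n)=\Arr_{\cS}(\barpi_n)$; your substitution $i=\sigma^{-1}(j)$ simply writes out the check that the paper leaves implicit.
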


\subsection{Mallows infinite process subsequences}

In this subsection we will study relations between the $\cS$--arrangements in some subsequences of $\barPi\sim\Mallows(\bN,q)$ and $\cS$--arrangements in $\barpi_t\sim\Mallows(t,q)$, for various $t$. 
First we define a family of subsequences of  $\barPi\sim\Mallows(\NN,q)$. 

\begin{definition}\label{Def:barPi:Obraz}
	Let    $a_1<a_2<\ldots<a_{t_2-t_1+1}$ be indices of all the terms of the sequence $\barPi$ that have values in the interval $[t_1,t_2]$. Then we set
	\begin{equation*}
		\barPi([t_1,t_2])=
		(\barPi_{a_1},\barPi_{a_2},\ldots,\barPi_{a_{t_2-t_1+1}}),
	\end{equation*}	
	and 
	\begin{equation*}
		\barbarPi([t_1,t_2])=(\barpi_{[t_1,t_2],1},\ldots,\barpi_{[t_1,t_2],t_2-t_1+1}),\text{ where }\forall_{i\in [t_2-t_1+1]}\ \barpi_{[t_1,t_2],i}=\barPi_{a_i}-(t_1-1). 
	\end{equation*}	
\end{definition}
In other words, $\barPi([t_1,t_2])$ is the subsequence of $\barPi$ consisting of those terms of $\barPi$ that have values between $t_1$ and $t_2$. Moreover $\barbarPi([t_1,t_2])$ is a sequence of preimages of a permutation in $\bS_{t_2-t_1+1}$ that is build upon $\barPi([t_1,t_2])$. It retains the order of elements of $\barPi([t_1,t_2])$ but rescales so that the values of terms of $\barbarPi([t_1,t_2])$ would be in $[t_2-t_1+1]$ instead of $[t_1,t_2]$. For example, in Figure~\ref{Fig:Def:Mallows} we have $\barPi([4,8])=(4,7,5,6,8)$ and 
$\barbarPi([4,8])=(\barpi_{[4,8],1},\barpi_{[4,8],2},\ldots,\barpi_{[4,8],5})=(4-3,7-3,5-3,6-3,8-3)=(1,4,2,3,5)$. Note that rescaling preserves $\cS$--arrangements, therefore there is one to one correspondence between $\cS$--arrangements in $\barPi([t_1,t_2])$ and $\barbarPi([t_1,t_2])$. Last but not least $\barPi([t])=\barbarPi([t])$.

The following lemma states a relation between the distributions of some subsequences of $\barPi\sim \Mallows(\NN,q)$ and the distribution of $\barpi_t\sim\Mallow(t,q)$, $t\in\NN$. We state it in the way convenient for our purposes.

\begin{lemma}[\cite{GnedinOlshanski2010}, see also {\cite[Lemma~2.1]{BasuBhat17}} ]\label{Lem:CoupligProcesses}
	Let $q\in (0,1)$ and
	$\barPi\sim\Mallows(\NN,q)$. Then for any $0=t_0<t_1<t_2<\ldots<t_s$, $\barbarPi([t_{i-1}+1,t_i])$, $i\in [s]$, are independent and  
	\[
	\barbarPi([t_{i-1}+1,t_i])\sim \Mallows(t_i-t_{i-1},q),\quad i\in [s].
	\]
\end{lemma}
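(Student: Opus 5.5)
The plan is to prove the statement first for the finite model $\Mallows(N,q)$ with $N\ge t_s$, using an explicit factorisation of the inversion count, and then pass to $N\to\infty$ by coupling $\Mallows(N,q)$ with $\barPi\sim\Mallows(\NN,q)$. For a sequence $w=(w_1,\ldots,w_N)$, let $\text{inv}(w)$ denote its number of inversions, so that $\text{inv}(\sigma)=\text{inv}(\sigma^{-1})$ for $\sigma\in\bS_N$. Hence $\barpi_N\sim\Mallows(N,q)$ satisfies $\Pra{\barpi_N=w}\propto q^{\text{inv}(w)}$ for every arrangement $w$ of $[N]$.

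\emph{Step 1 (finite factorisation).} Fix $0=t_0<t_1<\ldots<t_s\le N$ and set $t_{s+1}=N$. Call the values in $[t_{i-1}+1,t_i]$ block $i$, for $i\in[s+1]$. An arrangement $w$ of $[N]$ is determined by two pieces of data:
\begin{itemize}
\item[(a)] the \emph{colour word} $c(w)$, which records, for each position, which block the value there belongs to;
\item[(b)] the rescaled internal orders $w^{(i)}$, which are the analogues of $\barbarPi([t_{i-1}+1,t_i])$.
\end{itemize}
This correspondence is a bijection, with all choices of $w^{(1)},\ldots,w^{(s+1)}$ allowed for each fixed colour word. Every inversion is either inside one block or between two blocks. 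An inversion between two blocks occurs exactly when a position carrying a larger block index precedes a position carrying a smaller one, so the number of such inversions depends only on $c(w)$. Therefore
\[
q^{\text{inv}(w)}=q^{\text{cross}(c(w))}\prod_{i}q^{\text{inv}(w^{(i)})}.
\]
Summing over colour words shows that $w^{(1)},\ldots,w^{(s+1)}$ are independent, that each $w^{(i)}$ has law $\Mallows(t_i-t_{i-1},q)$, and that all of them are independent of $c(w)$. This proves the lemma for $\Mallows(N,q)$.

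\emph{Step 2 (sequential construction of the finite model).} Generate $\barpi_N$ by the same insertion procedure as $\barPi$, on the finite line $[N]$. Value $i$ is placed in the $Z_i^{(N)}$-th free spot, where $Z_i^{(N)}$ is geometric truncated to $[N-i+1]$, that is, $\Pra{Z_i^{(N)}=k}\propto q^{k-1}$. When $i$ is placed in the $k$-th free spot, the $k-1$ free spots to its left are later filled by larger values, so this step creates exactly $k-1$ inversions. Hence $\text{inv}(w)=\sum_i (Z^{(N)}_i-1)$, and the product of the truncated-geometric weights is proportional to $q^{\text{inv}(w)}$. So this procedure produces $\Mallows(N,q)$.

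\emph{Step 3 (coupling).} Couple $Z_i^{(N)}$ with $Z_i$ so that they are equal whenever $Z_i\le N-i+1$. Then with probability at least
\[
1-\sum_{i\le t_s}q^{N-i+1}\ \ge\ 1-\frac{q^{N-t_s+1}}{1-q}\longrightarrow 1
\]
the first $t_s$ insertions coincide. On this event, the relative order of the values in $[t_s]$ is the same in both models. In particular, the tuple $\big(\barbarPi([t_{i-1}+1,t_i])\big)_{i\in[s]}$ coincides with its finite-$N$ counterpart. Its law is therefore the $N\to\infty$ limit of a law that, by Step 1, does not depend on $N$. This gives both the independence and the $\Mallows(t_i-t_{i-1},q)$ marginals.

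The only delicate point is the bijection in Step 1: one must check that the cross-block inversion count depends on the colour word alone, and that every combination of colour word and internal orders arises from exactly one $w$. Both facts follow directly from the definitions. An alternative route, which avoids the coupling, is to observe that after inserting $1,\ldots,t_{i-1}$ the remaining free spots, relabelled in order, carry an independent copy of the process driven by $Z_{t_{i-1}+1},Z_{t_{i-1}+2},\ldots$. This reduces the lemma to the case $s=1$, since blocks then depend on disjoint sets of $Z$'s. However, identifying the law of the first block directly in the infinite model is harder, so I would use the finite factorisation.
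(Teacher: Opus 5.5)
Your proof is correct. It takes a different route from the paper only in the trivial sense that the paper proves nothing here: the lemma is quoted from Gnedin--Olshanski and from Lemma~2.1 of Basu--Bhatnagar, with no argument given. Your proposal supplies a self-contained, elementary derivation that uses only the finite Mallows formula and the insertion construction the paper itself defines. That is what your route buys over the citation.

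The three ingredients work as claimed:
\begin{enumerate}
\item The factorisation $q^{\text{inv}(w)}=q^{\text{cross}(c(w))}\prod_i q^{\text{inv}(w^{(i)})}$ holds on a product domain: all colour words with the prescribed multiplicities, times all internal orders. This gives independence in the finite model, and also independence from the interleaving pattern, which you do not need.
\item The Lehmer-code identity $\text{inv}(w)=\sum_i (Z_i^{(N)}-1)$ identifies the truncated insertion procedure with $\Mallows(N,q)$.
\item The truncation coupling transfers the finite statement to $\barPi$, since for every fixed tuple of block orders the two probabilities differ by at most $q^{N-t_s+1}/(1-q)\to 0$.
\end{enumerate}

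A few details deserve to be written out.
\begin{itemize}
\item Realise the coupling by setting $Z_i^{(N)}=Z_i$ on $\{Z_i\le N-i+1\}$ and otherwise drawing an independent truncated geometric. This has the right law, because the truncated law is exactly the conditional law of $Z_i$ given $\{Z_i\le N-i+1\}$, and it keeps the $Z_i^{(N)}$ independent over $i$.
\item On the good event, induction on $i$ shows that the values $1,\dots,t_s$ occupy the \emph{same spots} in both constructions, not merely the same relative order. The reason is that the free spots in $[N]$ are exactly the first $N-i+1$ free spots of $\NN$.
\item Take $N>t_s$ so the last block in Step~1 is nonempty.
\end{itemize}

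Your ``alternative route'' combines neatly with the main argument rather than competing with it. After $1,\dots,t_{i-1}$ are placed, relabelling the free spots in order shows that $\barbarPi([t_{i-1}+1,t_i])$ is a function of $Z_{t_{i-1}+1},\dots,Z_{t_i}$ alone. It therefore has the law of $\barbarPi([t_i-t_{i-1}])$. Independence of the blocks is then immediate in the infinite model, and Steps~1--3 are only needed for $s=1$ to identify the law of the first block.
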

The lemma together with Fact~\ref{Rem:Perm:Seq} imply that the problem of finding the bound on $\Arr_{\cS}(\varPi_n)$, where $\varPi_n\sim \Mallow(n,q)$, is reduced to analysing the value of $\Arr_{\cS}(\barPi([n]))$ (recall that $\barPi([n])=\barbarPi([n])$).
\begin{corollary}\label{Cor:varPi:to:barPi}
	The statements of Theorem~\ref{Thm:Main:Expected} and~Theorem~\ref{Thm:Main:Poisson} are equivalent to the ones in which $\varPi_n\sim\Mallow(n,q)$ is replaced by $\barPi([n])$, $\barPi\sim\Mallows(\NN,q)$.	
\end{corollary}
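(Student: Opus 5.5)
The plan is to obtain the corollary by chaining two equivalences: Fact~\ref{Rem:Perm:Seq} for the passage from permutations to preimage sequences, and Lemma~\ref{Lem:CoupligProcesses} with $s=1$ for the passage from preimage sequences to the Mallows infinite sequence.

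First, by Fact~\ref{Rem:Perm:Seq}, both Theorem~\ref{Thm:Main:Expected} and Theorem~\ref{Thm:Main:Poisson} may be restated with $\varPi_n\sim\Mallow(n,q)$ replaced by $\barpi_n\sim\Mallows(n,q)$. This uses the remark preceding the fact: the correspondence $\sigma\mapsto(\sigma^{-1}(1),\ldots,\sigma^{-1}(n))$ is a deterministic bijection, and by \eqref{Def:Sarr:BarPi} it maps $(\cS,k_1,k_2)$--arrangements of $\varPi_n$ bijectively onto $(\cS,k_1,k_2)$--arrangements of $\barpi_n$. Consequently $\Arr_{\cS}(\varPi_n)=\Arr_{\cS}(\barpi_n)$ pointwise, so the two random variables have the same law.

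Second, apply Lemma~\ref{Lem:CoupligProcesses} with $s=1$, $t_0=0$ and $t_1=n$. This gives $\barbarPi([n])\sim\Mallows(n,q)$, and since $t_0=0$ no rescaling occurs, so $\barbarPi([n])=\barPi([n])$. Hence $\barPi([n])$ and $\barpi_n$ have the same distribution as random elements of the finite set of sequences of length $n$. The count $\Arr_{\cS}(\cdot)$ is a fixed deterministic function on that set, defined by \eqref{Def:Sarr:BarPi} and evaluated on $\barPi([n])$ viewed as the preimage sequence of a permutation of $[n]$. Therefore $\Arr_{\cS}(\barPi([n]))$ has the same law as $\Arr_{\cS}(\barpi_n)$.

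Both theorems are statements only about the law of $\Arr_{\cS}$: its expectation, and its total variation distance to $\operatorname{Po}(\lambda_n)$ with $\lambda_n$ that expectation. Such statements are invariant under replacing the random variable by one with the same distribution, which completes the equivalence.

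The only point needing care is to check that an "$\cS$--arrangement in $\barPi([n])$" means the finite-sequence notion \eqref{Def:Sarr:BarPi} applied to $\barbarPi([n])$, with $0\le k_1,k_2\le n-\ell$. It should not be confused with the infinite-sequence notion \eqref{Eq:SArr:barPi}, which would count different objects. This is purely definitional, so there is no real obstacle.
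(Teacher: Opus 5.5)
Your proposal is correct and follows essentially the same route as the paper, which derives the corollary from Fact~\ref{Rem:Perm:Seq} together with Lemma~\ref{Lem:CoupligProcesses} (taking $s=1$, $t_1=n$, so that $\barPi([n])=\barbarPi([n])\sim\Mallows(n,q)$), noting that both theorems depend only on the law of $\Arr_{\cS}$. Your remark that arrangements in $\barPi([n])$ are counted with re-indexed positions is the right reading, and it matches how the paper uses this count in Lemma~\ref{Lem:Inclusion:Implication}.
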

In the remaining part of the paper, we will focus on the value of $\Arr_{\cS}(\barPi([n]))$.

Recall that the subsequence $\barPi|_{[t_1,t_2]}$ consists of the terms of $\barPi$ with indices from $t_1$ to $t_2$.
To illustrate the following lemma we use example in Figure~\ref{Fig:Sarr}. In the Mallows sequence in  Figure~\ref{Fig:Sarr} 
\[\barPi([7])\stackrel{(a1)}{\subseq} \barPi|_{[10]}\stackrel{(a2)}{\subseq} \barPi([15]),\]
where $\subseq$ is defined at the end of Introduction.
\begin{figure}[t]
	\centering
	\begin{tikzpicture}[xscale=.6]
		\foreach \i/\j in {2/1,2/2,2/3,6/1}
		{
			\draw[rounded corners,fill=lightgray, draw=black]  (\i-0.35,1.4*\j+0.05) rectangle (\i+2.35,1.4*\j+0.6);
		}
		\foreach \j in {2,3}
		{\draw[rounded corners,fill=lightgray, draw=black]  (5-0.35,1.4*\j+0.05) rectangle (8+2.35,1.4*\j+0.6);}
		\draw[rounded corners,fill=lightgray, draw=black]  (12-0.35,1.4*1+0.05) rectangle (13+2.35,1.4*1+0.6);
		\foreach \j in {1,2,3}
		{
			\foreach \i in {1,...,19}
			{
				\draw (\i-0.4,1.4*\j)--(\i+0.4,1.4*\j);
				\node at (\i,1.4*\j-0.2) {{\footnotesize \i}};
		}}
		\foreach \i/\j in {1/4,2/3,3/1,4/2,5/7,6/14,7/12,8/13,9/5,10/6,12/11,13/9,15/10,17/8,18/15}
		{
			\node at (\i,1.4+.3) {\j};
		}
		\foreach \i/\j in {1/4,2/3,3/1,4/2,5/7,9/5,10/6,13/9,15/10,17/8}
		{
			\node at (\i,2*1.4+.3) {\j};
		}
		\foreach \i/\j in {1/4,2/3,3/1,4/2,5/7,9/5,10/6}
		{
			\node at (\i,3*1.4+.3) {\j};
		}
		\foreach \i/\j in {3/7,2/10,1/15}
		{
			\node at (-2.6,1.4*\i+0.2) {$\barPi([\j])=(\barpi_{\j,i})_{i\in [\j]}$};
		}
	\end{tikzpicture}
	\caption{\label{Fig:Sarr} Examples of the Mallows sequences $\barpi_{t}$ in the coupling with the Mallows infinite sequence $\barPi$. In grey rectangles $\cS$--arrangements in $\varPi_t$ for $\cS=\{\sigma\}$, $\sigma:[3]\to [3]$, $\sigma(1)=2$, $\sigma(2)=3$, $\sigma(3)=1$, i.e. $(\sigma^{-1}(1),\sigma^{-1}(2),\sigma^{-1}(3))=(3,1,2)$.}
\end{figure}

We make the following observations.
\begin{itemize}
	\item[(i)] By  $(a2)$ all $(\cS,k_1,k_2)$--arrangements in $\barPi|_{[10]}$, namely $(3,1,2)$ and $(14,12,13)$, are $(\cS,k_1,k_2)$--arrangements with $k_1\le 10-3=7$ in $\barPi([15])$. However $\barPi([15])$ might have more $\cS$--arrangements (in the example it is $(11,9,10)$). 
	\item[(ii)] By $(a1)$ and $(a2)$ all $(\cS,k_1,k_2)$--arrangements in $\barPi([15])$ with $k_2\le 7-3=4$, namely $(3,1,2)$, are $(\cS,k_1,k_2)$--arrangements in $\barPi|_{[10]}$. Note that $\barPi|_{[10]}$ may have other $(\cS,k_1,k_2)$--arrangements of $\barPi([15])$ with $k_2 > 7-3=4$, such as $(14,12,13)$. 
	\item[(iii)] The sequence $\barbarPi([7-3+1,15])=(3,10,8,9,1,2,7,5,6,4,11)$ is obtained from $\barPi([7-3+1,15])=(7,14,12,13,5,6,11,9,10,8,15)$ by rescaling. The first one has $\cS$--arrangements $(10,8,9)$ and $(7,5,6)$. These $\cS$--arrangements correspond to $(14,12,13)$  and  $(11,9,10)$ in $\barPi([15])$. In fact each $(\cS,k_1,k_2)$--arrangement in $\barPi([15])$ with $k_2 > 7-3$ corresponds to an $\cS$--arrangement in $\barbarPi([7-3+1,15])$. 
\end{itemize}

\begin{lemma}\label{Lem:Inclusion:Implication}
	Let $1\le t_1\le t_2\le t_3$ and $\emptyset\neq\cS\subseteq \bS_\ell$.
	\begin{align}
		\label{Eq:Inclusion:Implication1}
		\barPi|_{[t_2]}\subseq \barPi([t_3]) &\Rightarrow	\Arr_{\cS}(\barPi|_{[t_2]})\le \Arr_{\cS}(\barPi([t_3])) \\
		\label{Eq:Inclusion:Implication2}
		\barPi([t_1])\subseq \barPi|_{[t_2]}\subseq \barPi([t_3]) &\Rightarrow	
		\Arr_{\cS}(\barPi([t_3]))\le \Arr_{\cS}(\barPi|_{[t_2]}) +  \Arr_{\cS}(\barbarPi([t_1-\ell+1,t_3]))
	\end{align}	
\end{lemma}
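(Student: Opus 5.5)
The plan is to argue directly from the definitions: an $\cS$--arrangement in any of these sequences is a block of $\ell$ \emph{consecutive terms} whose values form an interval of $\ell$ consecutive integers arranged according to some $\sigma\in\cS$. The key observation is that an arrangement is preserved when we pass to a subsequence or supersequence, as long as the block stays contiguous in both and its values stay in the admissible range.

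For \eqref{Eq:Inclusion:Implication1}, take an arrangement in $\barPi|_{[t_2]}$. It consists of the terms $\barPi_{k_1+1},\ldots,\barPi_{k_1+\ell}$ with $k_1+\ell\le t_2$, whose values are $k_2+\sigma^{-1}(i)$.
\begin{itemize}
\item Since $\barPi|_{[t_2]}\subseq\barPi([t_3])$, all these terms have values in $[t_3]$, so they appear in $\barPi([t_3])$.
\item They remain consecutive there: $\barPi([t_3])$ is the subsequence of $\barPi$ with values in $[t_3]$, and it contains all of $\barPi_1,\ldots,\barPi_{t_2}$, so no term of $\barPi([t_3])$ can sit between two of them.
\item Hence the block is an $\cS$--arrangement in $\barPi([t_3])$.
\end{itemize}
Distinct arrangements in $\barPi|_{[t_2]}$ occupy distinct positions, so they give distinct arrangements in $\barPi([t_3])$. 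This injection proves the inequality.

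For \eqref{Eq:Inclusion:Implication2}, take an arrangement in $\barPi([t_3])$ with value set $\{k_2+1,\ldots,k_2+\ell\}\subseteq[t_3]$, and split into two cases according to $k_2$.
\begin{itemize}
\item \emph{Case $k_2\le t_1-\ell$.} All values lie in $[t_1]$, so the terms lie in $\barPi([t_1])\subseq\barPi|_{[t_2]}$, and therefore in $\barPi|_{[t_2]}$. They are consecutive in $\barPi([t_3])$. Since $\barPi|_{[t_2]}$ is a subsequence of $\barPi([t_3])$, a run of consecutive terms of $\barPi([t_3])$ that lies entirely inside $\barPi|_{[t_2]}$ is also consecutive in $\barPi|_{[t_2]}$. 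Indeed, $\barPi|_{[t_2]}$ is a prefix of $\barPi$, so any term lying between two of its terms is also in it. Thus the block is an arrangement in $\barPi|_{[t_2]}$, and distinct blocks map to distinct arrangements.
\item \emph{Case $k_2\ge t_1-\ell+1$.} All values lie in $[t_1-\ell+1,t_3]$. The block is consecutive in $\barPi([t_3])$, and $\barPi([t_1-\ell+1,t_3])$ is a subsequence of $\barPi([t_3])$ containing the block. By the same contiguity argument the block is consecutive in $\barPi([t_1-\ell+1,t_3])$. Rescaling by subtracting $t_1-\ell$ preserves both the interval property and the order pattern, so the block yields an $\cS$--arrangement in $\barbarPi([t_1-\ell+1,t_3])$. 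Again the map is injective.
\end{itemize}
Summing the two cases gives \eqref{Eq:Inclusion:Implication2}.

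The only delicate point is the contiguity claim used throughout: if $A\subseq B$ and a block is consecutive in $B$ and contained in $A$, then it is consecutive in $A$. This holds in general because $A$ has no elements outside $B$, so nothing from $A$ can lie between consecutive elements of $B$. The other thing to check is the index bookkeeping, namely the case threshold $k_2\le t_1-\ell$ versus $k_2>t_1-\ell$. I expect this bookkeeping, not any probabilistic input, to be the main place where care is needed. No randomness is involved, and the statement holds for every realization of $\barPi$.
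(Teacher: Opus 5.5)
Your proposal is correct and follows the paper's proof: your injection for the first implication is the paper's observation (i), and your case split $k_2\le t_1-\ell$ versus $k_2>t_1-\ell$ for the second implication is exactly its observations (ii) and (iii). The one addition is your explicit contiguity argument (a block that is consecutive in $B$ and contained in $A\subseq B$ is consecutive in $A$), which fills in a step the paper leaves implicit.
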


\begin{proof}[Proof of Lemma~\ref{Lem:Inclusion:Implication}]
	Let $1\le t_1\le t_2\le t_3$ and set (see Definition~\ref{Def:barPi:Obraz})
	\[
	\barPi([t_1])=(\barPi_{a_1},\ldots,\barPi_{a_{t_1}})\quad\text{and}\quad
	\barPi([t_2])=(\barPi_{b_1},\ldots,\barPi_{b_{t_2}}).
	\]
	Then 
	\[\barPi([t_1])\subseq \barPi|_{[t_2]}, \text{ is }(\barPi_{a_1},\ldots,\barPi_{a_{t_1}})\subseq (\barPi_1,\ldots,\barPi_{t_2}).\]
	Moreover
	\[
	\barPi|_{[t_2]}\subseq\barPi([t_3]) \text{ means that } \forall_{i\in [t_2]} \barPi_{a_i}=\barPi_i\quad (a_i=i).\]
	Therefore for any  $\emptyset\neq\cS\subseteq \bS_\ell$.
	\begin{itemize}
		\item[(i)] If $\barPi|_{[t_2]}\subseq \barPi([t_3])$, then each $(\cS,k_1,k_2)$--arrangement in $\barPi$ with $0\le k_1\le t_2-\ell$ (i.e. $\cS$--arrangements in $\barPi|_{[t_2]}$) is an $(\cS,k_1,k_2)$--arrangement in $\barPi([t_3])$. 
		\item[(ii)] If $\barPi([t_1])\subseq \barPi|_{[t_2]}\subseq \barPi([t_3])$ then each $(\cS,k_1,k_2)$--arrangement in $\barPi([t_3])$ with $k_2 \le  t_1-\ell$  is an $(\cS,k_1,k_2)$--arrangement in $\barPi|_{[t_2]}$ (since all values from $[t_1]$ are in $t_2$ first terms of both $\barPi$ and $\barPi([t_3])$).
		\item[(iii)] Each $(\cS,k_1,k_2)$--arrangement in $\barPi([t_3])$ with $k_2 > t_1-\ell$ correspond to an $(\cS,k_1',k_2-t_1+\ell)$--arrangement in $\barbarPi([t_1-\ell+1,t_3])$ (see Definition~\ref{Def:barPi:Obraz}), for some $k_1'$ depending on $\barPi$.
	\end{itemize}

	Finally (i) implies \eqref{Eq:Inclusion:Implication1}
	and (ii) with (iii) imply \eqref{Eq:Inclusion:Implication2}.
\end{proof}

Define
\begin{equation}
	\label{Eq:Def:Delta}	
	\Delta=\Bigg\lceil \frac{\ln n}{1-q}\Bigg\rceil.
\end{equation}
\begin{lemma}\label{Lem:Inclusion0}
	Let $q\in (0,1)$ and $\barPi\sim\Mallows(\NN,q)$. 
	If $n(1-q)\ge 1$ and $t\ge x\Delta$ then for any $x>0$ 	
	\[
	\Pra{\barPi|_{[t-x\Delta]}\subseq \barPi([t])}\ge 1-2n^{-x+1}.
	\]
\end{lemma}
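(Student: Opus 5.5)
The plan is to reduce the event to a condition on the geometric variables $Z_1,Z_2,\ldots$ used in the recursive construction of $\barPi$, and then apply a union bound. Set $s=t-x\Delta$. Recall that $\barPi_i$ is the position on the line where the number $i$ is placed, and that $\barPi([t])$ lists, in increasing order of $i$, the numbers $i$ with $\barPi_i\le t$.

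First I will check that the inclusion $\barPi|_{[s]}\subseq\barPi([t])$ holds as soon as $\{1,\ldots,s\}$ is contained in the index set $\{i:\barPi_i\le t\}$, that is, as soon as
\[\barPi_i\le t\quad\text{for all } i\in[s].\]
Indeed, in that case the first $s$ indices of $\barPi([t])$ are exactly $1,\ldots,s$, because indices are listed in increasing order. So the event in the statement contains $\{\forall_{i\in[s]}\ \barPi_i\le t\}$.

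Next I will bound each position deterministically. When $i$ is placed, only the $i-1$ spots holding $1,\ldots,i-1$ are occupied. The $Z_i$-th unoccupied spot therefore lies at position at most $Z_i+i-1$, so $\barPi_i\le Z_i+i-1$. Hence the event contains
\[\bigcap_{i=1}^{s}\{Z_i\le t-i+1\}.\]
Since $\Pra{Z_i>m}=q^{m}$, the union bound gives
\[\Pra{\barPi|_{[s]}\not\subseq\barPi([t])}\le\sum_{i=1}^{s}q^{t-i+1}\le\sum_{j\ge x\Delta+1}q^{j}\le\frac{q^{x\Delta}}{1-q}.\]

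Finally I plug in the definition \eqref{Eq:Def:Delta} of $\Delta$. Using $q\le e^{-(1-q)}$ and $\Delta\ge \ln n/(1-q)$, we get $q^{\Delta}\le e^{-(1-q)\Delta}\le n^{-1}$, so $q^{x\Delta}\le n^{-x}$. The assumption $n(1-q)\ge1$ gives $1/(1-q)\le n$, so the failure probability is at most $n^{-x+1}$, which is even better than the claimed $2n^{-x+1}$.

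I expect no real obstacle. The only points needing care are the bookkeeping that identifies the inclusion of subsequences with the condition that $\barPi_i\le t$ for all $i\le s$, and checking that $x\Delta$ is treated consistently even when it is not an integer; in that case I would just round $s$ down, and the factor $2$ gives slack to absorb this.
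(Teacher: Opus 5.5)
\textbf{There is a genuine gap, in your second step.} In the paper, $\barPi=(\mathbf{\Pi}^{-1}(1),\mathbf{\Pi}^{-1}(2),\ldots)$ is the sequence read off the line. So $\barPi_j$ is the \emph{label sitting at spot $j$}, not the spot of label $j$. In Figure~\ref{Fig:Def:Mallows} one has $\barPi_1=4$, and the paper's example $\barPi([4,8])=(4,7,5,6,8)$ only makes sense with this reading.

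Your first step is fine. The event is exactly $\{\barPi_j\le t \text{ for all } j\in[s]\}$, i.e.\ each of the first $s$ spots is filled during the first $t$ insertions.

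The bound $\barPi_i\le Z_i+i-1$, however, bounds $\mathbf{\Pi}(i)$ (the spot of label $i$). It is false for $\barPi_i$. For instance, $Z_1=\dots=Z_{m-1}=2$ and $Z_m=1$ give $\barPi_1=m$, although $Z_1=2$. Concretely, take $s=1$, $t=2$, $Z_1=Z_2=2$, $Z_3=1$. Then $Z_1\le t$, but $\barPi_1=3>t$. So $\bigcap_{i\le s}\{Z_i\le t-i+1\}$ is not contained in the event.

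What you have actually proved is $\Pra{\mathbf{\Pi}(i)\le t \text{ for all } i\in[s]}\ge 1-n^{-x+1}$. That is the inclusion $\barPi([s])\subseq\barPi|_{[t]}$, i.e.\ the content of Lemma~\ref{Lem:Inclusion2}. Your computation is in fact the paper's proof of that lemma.

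\textbf{Missing idea: controlling holes in the first $s$ spots.} The event fails iff some spot $j\le s$ is still empty after step $t$ and is later filled by a label $k>t$. A union bound over $Z_1,\dots,Z_s$ does not detect this.

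The paper handles it as follows. Suppose label $k>t$ lands at a spot $j\le s$. Let $\varPi_k$ be the permutation whose preimage sequence is $\barbarPi([k])\sim\Mallows(k,q)$. Then label $k$ has relative position at most $j\le s=t-x\Delta$ in $\varPi_k$, so $|\varPi_k(k)-k|\ge (k-t)+x\Delta$. Lemma~\ref{Lem:CouplingKnown} bounds this by $2q^{(k-t)+x\Delta}$. Summing over $k>t$ gives $2q^{x\Delta}/(1-q)\le 2n^{-x+1}$, which is where the factor $2$ comes from.

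\textbf{An alternative repair.} You could transfer your computation to the correct event by first proving that $\mathbf{\Pi}^{-1}$ and $\mathbf{\Pi}$ have the same law. This symmetry follows from $\mathrm{inv}(\sigma)=\mathrm{inv}(\sigma^{-1})$ for finite Mallows permutations. To pass to the infinite process, couple with truncated geometric variables and let the number of spots tend to infinity; both events depend only on $Z_1,\dots,Z_t$. This would even yield $1-n^{-x+1}$. But the symmetry is a substantive extra ingredient, not a bookkeeping step, and your proposal neither states nor proves it.
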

\begin{lemma}\label{Lem:Inclusion2}
	Let $q\in (0,1)$ and $\barPi\sim\Mallows(\NN,q)$. 
	If $n(1-q)\ge 1$ and $t\ge 1$ then for any $x>0$ 	
	\[\Pra{\barPi([t])\subseq \barPi|_{[t+x\Delta]}}\ge 1-n^{-x+1}.
	\]
\end{lemma}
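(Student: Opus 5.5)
We prove Lemma~\ref{Lem:Inclusion2} by reducing the event $\barPi([t])\subseq \barPi|_{[t+x\Delta]}$ to the requirement that every value in $[t]$ appears among the first $t+x\Delta$ terms of $\barPi$. The values of $\barPi$ are the spots $1,2,\ldots$ filled by the recursive construction. Hence $\barPi([t])\subseq\barPi|_{[t+x\Delta]}$ holds if and only if, for every spot $s\in[t]$, the number $i$ with $\barPi_i=s$ satisfies $i\le t+x\Delta$. Equivalently, every spot in $[t]$ is occupied after the first $t+x\Delta$ steps of the construction.

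We bound the complementary event with a union bound over the spots $s\in[t]$. Fix $s\in[t]$ and suppose spot $s$ is still empty after step $m-1$, where $m\ge s$. At step $m$ the number of unoccupied spots to the left of $s$ is at most $s-1$. So spot $s$ is at most the $s$-th unoccupied spot, and this position is measurable with respect to $Z_1,\ldots,Z_{m-1}$. Denote it by $r\le s$. Conditionally on the past, $Z_m$ is $\operatorname{Geo}(1-q)$, so spot $s$ stays empty at step $m$ only if $Z_m\ne r$. This has conditional probability $1-(1-q)q^{r-1}$.

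This single-spot estimate is too weak, and a better count is needed. The key observation is that spot $s$ remains empty during steps $1,\ldots,m$ only if each $Z_j$ avoids the current index of $s$. In particular, all placements then land either left of $s$ or right of $s$. At most $s-1$ steps can land to the left of $s$, since there are only $s-1$ spots there. Therefore, if spot $s$ is still empty after $t+x\Delta$ steps, then at least $t+x\Delta-(s-1)\ge x\Delta+1$ steps $j$ had $Z_j$ strictly larger than the current index of $s$, which is at most $s\le t$.

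For a cleaner argument we use the last time spot $s-1$ (or the whole of $[s-1]$) becomes filled. After that time, the index of $s$ among unoccupied spots is $1$. From then on, each step fills $s$ with probability $1-q$, independently of the past. Before that time, at most $s-1$ steps fill spots in $[s-1]$. So the event requires a run of failures, which we control by the following simple bound. Call a step a ``miss'' if it places an element to the right of spot $s$. Each miss occurs with conditional probability at most $q$. We would bound the probability of at least $x\Delta$ misses in total before spot $s$ is filled by $q^{x\Delta}$. Filling a spot to the left reduces the index of $s$, and the argument is that each step at which $s$ has index $r$ misses with probability $q^{r}\le q$, while non-miss, non-fill steps number at most $s-1$.

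This gives $\Pra{\text{$s$ empty after $t+x\Delta$ steps}}\le q^{x\Delta}\le e^{-(1-q)x\Delta}\le n^{-x}$. A union bound over $s\in[t]$ yields total failure probability at most $t\,n^{-x}$. The main obstacle is that $t$ can exceed $n$. We would handle this either by noting that in applications $t\le n$, which gives $tn^{-x}\le n^{-x+1}$, or by sharpening the bound: spots $s$ far to the left are filled early with overwhelming probability. To make the sharpening rigorous, we would check that the first empty spot advances through $[t]$ according to the same geometric-run mechanism. The failure event then says that the leftmost empty spot has not passed $t$ by time $t+x\Delta$, which needs at least $x\Delta$ misses overall. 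Controlling the total number of misses via a geometric or negative binomial tail gives $n^{-x+1}$, using $n(1-q)\ge1$ to absorb polynomial factors.
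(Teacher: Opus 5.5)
\textbf{Main gap: you are bounding the wrong event.} Your first paragraph swaps the roles of indices and values of $\barPi$. In the paper, $\mathbf\Pi(i)$ is the spot that receives the number $i$, and $\barPi_j=\mathbf\Pi^{-1}(j)$ is the number sitting in spot $j$. Compare Figure~\ref{Fig:Def:Mallows} and the example $\barPi([4,8])=(4,7,5,6,8)$. So the values of $\barPi$ are step labels, not spots, and $\barPi([t])\subseq\barPi|_{[t+x\Delta]}$ says that each of the numbers $1,\dots,t$ is placed in one of the spots $1,\dots,t+x\Delta$.

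Your event, ``every spot of $[t]$ is occupied after $t+x\Delta$ steps'', is instead $\{\barPi|_{[t]}\subseq\barPi([t+x\Delta])\}$. That is the dual event, of the type in Lemma~\ref{Lem:Inclusion0}. Your bound transfers to the stated event only through an extra fact such as ``$\mathbf\Pi$ and $\mathbf\Pi^{-1}$ have the same law''. This is true (for instance by approximating with $\barPi([N])$ and using inversion invariance of $\Mallow(N,q)$), but you neither state nor prove it.

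The stated event is in fact the easy one. Put $T=t+x\Delta$. If $1,\dots,i-1$ already lie in $[T]$, then $i$ lands in $[T]$ iff $Z_i\le T-(i-1)$. Hence the event is exactly $\bigcap_{i\le t}\{Z_i\le T-i+1\}$, and a union bound gives failure probability at most $\sum_{i=1}^{t}q^{x\Delta+t-i+1}\le q^{x\Delta}/(1-q)\le n^{1-x}$. This is the paper's proof.

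\textbf{The argument for your own event is also incomplete, in two places.}

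First, the per-spot tail $q^{K}$ for $K$ misses before spot $s$ is filled does not follow from ``each miss has conditional probability at most $q$'' plus ``at most $s-1$ left steps''. Interspersed left steps could inflate the tail by a negative-binomial factor. What makes it true is memorylessness: conditionally on a step not landing left of $s$ (current index $r$), it lands right of $s$ with probability exactly $q^{r}/q^{r-1}=q$. So the number of misses before the fill is geometric.

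Second, your ``main obstacle'' $t>n$ is self-inflicted. You already showed that at least $t+x\Delta-(s-1)$ misses are needed, which gives $q^{x\Delta+t-s+1}$ per spot. Summing, $\sum_{s\le t}q^{x\Delta+t-s+1}\le q^{x\Delta}/(1-q)\le n^{1-x}$, uniformly in $t$. Rounding the exponent down to $x\Delta$ is what produced $t\,n^{-x}$. Neither proposed fix proves the lemma for all $t\ge 1$: restricting to $t\le n$ changes the statement, and the negative-binomial sketch is never carried out.

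With both repairs, your argument becomes a self-contained proof of $\Pra{\barPi|_{[t]}\subseq\barPi([t+x\Delta])}\ge 1-n^{1-x}$. That is the Lemma~\ref{Lem:Inclusion0}-type statement, obtained without Lemma~\ref{Lem:CouplingKnown}, but it is not the lemma you were asked to prove.
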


In the proof of Lemma~\ref{Lem:Inclusion0} we will use the following result from \cite{BhatnagarPeled2015}.

\begin{lemma}[{\cite[Theorem 1.1]{BhatnagarPeled2015}}]\label{Lem:CouplingKnown}
	Let $\varPi_t\sim\Mallow(t,q)$, $q\in (0,1)$, $t\in \NN$, then
	\[
	\forall_{x\ge 1}\forall_{i\in [t]}\Pra{|\varPi_t(i)-i|\ge x}\le 2q^{x}. 
	\]
\end{lemma}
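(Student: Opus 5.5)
We prove the bound directly from the sequential (Lehmer code) construction of the finite Mallows permutation, treating the two tails $\varPi_t(i)\ge i+x$ and $\varPi_t(i)\le i-x$ separately and then taking a union bound. The plan is to show that each tail has probability at most $q^x$.

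\textbf{Step 1 (sequential construction).} Take independent $Z_1,\ldots,Z_t$, where $Z_i$ has the geometric law truncated to $[t-i+1]$, that is, $\Pra{Z_i=k}=q^{k-1}(1-q)/(1-q^{t-i+1})$. Let $\varPi_t(i)$ be the $Z_i$-th smallest value in $[t]\setminus\{\varPi_t(1),\ldots,\varPi_t(i-1)\}$.
\begin{itemize}
\item The map $(Z_1,\ldots,Z_t)\mapsto\varPi_t$ is a bijection between $\prod_i[t-i+1]$ and $\bS_t$.
\item $Z_i-1$ is exactly the number of $j>i$ with $\varPi_t(j)<\varPi_t(i)$, so $\text{inv}(\varPi_t)=\sum_i (Z_i-1)$.
\end{itemize}
Hence $\Pra{\varPi_t=\sigma}\propto q^{\text{inv}(\sigma)}$, so the resulting $\varPi_t$ is $\Mallow(t,q)$. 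This mirrors the construction of $\barPi$ in the paper, applied to positions instead of values. One could equally use Lemma~\ref{Lem:CoupligProcesses}, but the finite version is cleaner here.

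\textbf{Step 2 (upper tail).} At step $i$ only $i-1$ values have been used. So the $Z_i$-th smallest unused value is at most $Z_i+i-1$. Therefore $\varPi_t(i)\ge i+x$ forces $Z_i\ge x+1$, and
\[
\Pra{Z_i\ge x+1}=\frac{q^x-q^{t-i+1}}{1-q^{t-i+1}}\le q^x
\]
(the event is empty if $x\ge t-i+1$). Hence $\Pra{\varPi_t(i)-i\ge x}\le q^x$.

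\textbf{Step 3 (lower tail by symmetry).} Consider the reverse–complement map $\sigma\mapsto\sigma'$, $\sigma'(i)=t+1-\sigma(t+1-i)$. A pair $(i,j)$ is an inversion of $\sigma$ iff $(t+1-j,t+1-i)$ is an inversion of $\sigma'$. So $\text{inv}(\sigma')=\text{inv}(\sigma)$, and $\varPi_t'$ is again $\Mallow(t,q)$. Moreover,
\[
\varPi_t(i)\le i-x \iff \varPi_t'(t+1-i)\ge (t+1-i)+x,
\]
so Step 2 applied to $\varPi_t'$ at position $t+1-i$ gives $\Pra{\varPi_t(i)-i\le -x}\le q^x$. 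Adding the two tails yields the claim, $\Pra{|\varPi_t(i)-i|\ge x}\le 2q^x$.

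The only point needing care is Step 1: checking that $\text{inv}$ decomposes as $\sum_i(Z_i-1)$, so that the product of truncated geometric weights is exactly the Mallows measure. Once that is verified, Steps 2 and 3 are one-line estimates. Non-integer $x\ge1$ is handled by applying the argument to $\lceil x\rceil$ and using $q^{\lceil x\rceil}\le q^x$.
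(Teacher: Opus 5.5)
Your proof is correct. The paper gives no proof of this lemma: it imports the bound as Theorem~1.1 of Bhatnagar--Peled and uses it only inside the proof of Lemma~\ref{Lem:Inclusion0}. Your argument therefore replaces a citation with a self-contained proof, and each of its three ingredients holds up.
\begin{enumerate}
\item The position-by-position Lehmer-code construction with independent truncated geometric $Z_i$ does produce $\Mallow(t,q)$. Since $Z_i-1$ counts exactly the inversions $(i,j)$ with $j>i$, the product of weights is proportional to $q^{\text{inv}(\sigma)}$.
\item The deterministic bound $\varPi_t(i)\le Z_i+i-1$ gives $\Pra{\varPi_t(i)\ge i+x}\le \Pra{Z_i\ge x+1}=(q^x-q^{t-i+1})/(1-q^{t-i+1})\le q^x$.
\item The reverse--complement map preserves $\text{inv}$, so it transfers the upper-tail bound to the lower tail. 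This is the right symmetry to use; the plain reversal in the paper's duality remark would send $q$ to $1/q$.
\end{enumerate}
You also handle the edge cases $x\ge t-i+1$ and $i=t$, and non-integer $x$, correctly.

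The citation buys brevity. Your route makes the paper self-contained, using only the geometric sequential construction it already employs for $\barPi$. It also gives the sharper one-sided estimate $q^x$ for each tail separately. Lemma~\ref{Lem:Inclusion0} only needs the lower tail at the last position, $\Pra{\varPi_k(k)\le k-y}\le q^y$, which your Step~3 supplies via position $1$ of the reverse--complement. So the factor $2$ in that lemma could actually be dropped.
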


\begin{proof}[Proof of Lemma~\ref{Lem:Inclusion0}]
	Recall that for each $k\ge 1$ we defined
	\[\barPi([k])=\barbarPi([k])=(\barPi_{a_{k,1}},\ldots,\barPi_{a_{k,k}})
	\]
	to be the subsequence of $\barPi\sim \Mallows(\bN,q)$ that consists of the terms of $\barPi$ with values in $\{1,\ldots,k\}$ (Definition~\ref{Def:barPi:Obraz}).
	By Lemma~\ref{Lem:CoupligProcesses} we have that, for all $k\in \NN$, $\barPi([k])$ has the distribution of the sequence of preimages $(\barpi_{k,1},\ldots,\barpi_{k,k})$ of  $\varPi_{k}\sim\Mallow(k,q)$. Therefore
	\[\forall_{i\in [k]} \barPi_{a_{k,i}}=\barpi_{k,i}=\varPi_k^{-1}(i).\]
	Thus
	\begin{equation}\label{Eq:Incl:x1}
		\forall_{i\in [k]} \varPi_k(\barPi_{a_{k,i}})=i.	
	\end{equation}
	Now let us analyse the event  $\{\barPi|_{[t-x\Delta]}\subseq \barPi([t])\}.$
	$\{\barPi|_{[t-x\Delta]}\subseq \barPi([t])\}$ occurs iff for all $j\in [t-x\Delta]$ we have $\barPi_j\le t$. Therefore the opposite event to $\{\barPi|_{[t-x\Delta]}\subseq \barPi([t])\}$  is the the event that there exists $k$, $k>t$, such that for some $j\in [t-x\Delta]$ we have $\barPi_j=k$.

	Now let us assume that, given $k>t$, for some $j\in [t-x\Delta]$ we have $\barPi_j=k$. Then in $\barPi([k])$ we have for some $i\in [k]$ 
	\begin{equation}\label{Eq:Incl:x2}
		\barPi_{a_{k,i}}=k\quad\text{ and }\quad a_{k,i}=j\le t-x\Delta.
	\end{equation}
	Note that $a_{k,1}<a_{k,2}<\ldots<a_{k,k}$. Therefore $i\le a_{k,i}$, for all $i\in [k]$. Thus by \eqref{Eq:Incl:x1} and \eqref{Eq:Incl:x2}
	\begin{equation}\label{Eq:Incl:x3}
		\varPi_k(k)=\varPi_k(\barPi_{a_{k,i}})=i\le a_{k,i} \le t- x\Delta.
	\end{equation}
	
	To summarize the discussion leading to \eqref{Eq:Incl:x3}, if event $\{\barPi|_{[t-x\Delta]}\subseq \barPi([t])\}$ does not occur, then there exists $k$, $k>t$, such that
	\[	\varPi_k(k)\le t- x\Delta.
	\]	
	Therefore for $n(1-q)\ge 1$, using Lemma~\ref{Lem:CouplingKnown} (in the fourth inequality), we obtain
	\begin{align*}
		1-\Pra{\barPi|_{[t-x\Delta]}\subseq \barPi([t])}
		&\le \Pra{\exists_{k > t}\varPi_k(k)\le k - (k-t) - x\Delta}\\
		&\le \sum_{k=t+1}^{\infty}\Pra{\varPi_k(k)\le k - (k-t) - x\Delta}\\
		&\le \sum_{k=t+1}^{\infty}\Pra{|\varPi_k(k)-k|\ge (k-t) + x\Delta}\\
		&\le \sum_{k=t}^{\infty} 2q^{x\Delta}q^{k-t}\\
		&\le 2[1-(1-q)]^{x\ln n/(1-q)}\frac{1}{1-q}\\
		&\le 2n^{-x+1}.
	\end{align*}
	Thus
	\[
	\Pra{\barPi|_{[t-x\Delta]}\subseq \barPi([t])}\ge 1-2n^{-x+1}.
	\]
\end{proof}

\begin{proof}[Proof of Lemma~\ref{Lem:Inclusion2}] 
	In the construction of the Mallows infinite sequence we have that event $\{\barPi([t])\subseq \barPi|_{[t+x\Delta]}\}$ occurs iff for all $i\in [t]$ we have $Z_i\le t+x\Delta-(i-1)$. Therefore, since $Z_i\sim \operatorname{Geo}(1-q)$ and $n(1-q)\ge 1$,
	\begin{align*}
		1-&\Pra{\{\barPi([t])\subseq \barPi|_{[t+x\Delta]}\}}
		=\Pra{\exists_{i\in [t]}Z_i > t+x\Delta-(i-1)}\le \sum_{i=1}^{t}\Pra{Z_i > t+x\Delta-(i-1)}\\
		&= \sum_{i=1}^{t}q^{x\Delta+(t-i+1)}
		\le \sum_{j=x\Delta}^{\infty} q^j=\frac{q^{x\Delta}}{1-q}
		\le \frac{(1-(1-q))^{x(1-q)^{-1}\ln n}}{1-q}
		\le ne^{-x\ln n}\le n^{-x+1}. 
	\end{align*}
\end{proof}

\section{Proof of Theorem~\ref{Thm:ExpectedValueProcess} and Theorem~\ref{Thm:Qk:value}}\label{Sec:Proof:Infinite}

In this section $\ell\ge 2$ is a constant integer, 
and $\cS$, $\emptyset\neq\cS\subseteq \bS_\ell$, is a family of permutations. For $k\ge 0$ and $j\ge 0$, we define
\[
\begin{split}
	\cA[k,j] &\text{ -- event that $\barPi$ contains an $(\cS,k,j)$--arrangement};\\
	\cB[k] &\text{ -- event that, for some $j\ge 0$, $\barPi$ contains an $(\cS,k,j)$--arrangement}.
\end{split}	
\]
Moreover let
\[
\bB_{k}=\Pra{\cB[k]}.
\]
be the probability that $(\barPi_{k+1},\ldots,\barPi_{k+\ell})$ forms an $\cS$--arrangement in $\barPi$, as defined in Definition~\ref{Def:Sarr:BarPi}.

Let us note that 
\begin{equation}\label{Eq:Def:LambdaBis}
	\Pra{\cA[0,0]}=\sum_{\sigma\in \cS}\prod_{j=1}^{\ell}\Pra{Z_j=\sigma(j)-|\sigma([j-1])\cap [\sigma(j)-1]|}=\productO
\end{equation}
is the probability that $(\barPi_1,\ldots,\barPi_\ell)$ is an $\cS$--arrangement consisting of values from $[\ell]$. The last equality follows by \eqref{Eq:Def:LambdaS}.
In the special case $\cS=\bS_\ell$, for the event $\cA[0,0]$ to occur, we require only that in the Mallows infinite process ${\mathbf \Pi}(i)\le \ell$ for all $i\in [\ell]$, i.e. $Z_i\le \ell+1-i$ for all $i\in [\ell]$. Therefore we have  
\begin{equation}\label{Eq:Def:LambdaEll}
	\product(\bS_\ell)=\prod_{i=1}^{\ell}\Pra{Z_i\le \ell+i-1}=(1-q^{\ell})(1-q^{\ell-1})\ldots (1-q)=\prod_{i=1}^{\ell}(1-q^i).
\end{equation}.

\subsection{Technical lemmas}

In this section we prove two technical lemmas  that are necessary to obtain Theorem~\ref{Thm:Qk:value}.
For integers $0\leq i<j$ we define 
\begin{equation}\label{Eq:Def:SsI}
	s(\emptyset)=0,\ s(I)=\sum_{r\in I}r,\text{ for }\emptyset\neq I\subseteq [j-1],\quad\text{and}\quad \sumI^{(j)}_{i}=\sum_{I\subseteq [j-1],|I|=i}q^{s(I)}.
\end{equation}
Moreover we set
\begin{equation}
	\product[a,b]=\frac{\product(\bS_{b})}{\product(\bS_{a-1})}=\prod_{i=a}^{b}(1-q^{i}),
	\text{ for }1\le a\le b,
	\quad
	\text{ and }
	\quad
	\product[a,b]=1,
	\text{ for }a > b,
\end{equation}
where $\product(\bS_b)$ is defined in \eqref{Eq:Def:Product}.

\begin{lemma}\label{Lem:Qk:SumIProp}
	Let $q\in (0,1)$ and $\ell\geq 2$. Then, 
	\begin{equation*}
		\sum_{i=0}^{\ell-1}(-1)^{i}\frac{\sumI_{i}^{(\ell)}}{\product[\ell+i,\ell+i]}=\frac{\product[1,\ell-1]}{\product[\ell,2\ell-1]}=
		\frac{\prod_{i=1}^{\ell-1}(1-q^i)}{\prod_{i=0}^{\ell-1}(1-q^{\ell+i})}.
	\end{equation*}
\end{lemma}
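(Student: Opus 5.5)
The plan is to turn the alternating sum into a power series in $q$ by expanding each denominator geometrically, resum over $i$ via the generating function of elementary symmetric polynomials, and finish with the $q$-binomial theorem.

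\textbf{Step 1: identify $\sumI^{(\ell)}_i$.} By \eqref{Eq:Def:SsI}, $\sumI^{(\ell)}_i$ is the $i$-th elementary symmetric polynomial $e_i(q,q^2,\ldots,q^{\ell-1})$ of the numbers $q^r$, $r\in[\ell-1]$. Hence, for any real $y$,
\[
\sum_{i=0}^{\ell-1}(-1)^i\sumI^{(\ell)}_i\,y^i=\prod_{r=1}^{\ell-1}(1-q^r y).
\]

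\textbf{Step 2: expand the denominators.} Since $q\in(0,1)$, we have $1/\product[\ell+i,\ell+i]=1/(1-q^{\ell+i})=\sum_{m\ge 0}q^{m(\ell+i)}$. The outer sum over $i$ is finite and the inner series converge absolutely, so we may exchange the sums. Applying Step 1 with $y=q^m$ gives
\[
\sum_{i=0}^{\ell-1}(-1)^{i}\frac{\sumI_{i}^{(\ell)}}{\product[\ell+i,\ell+i]}
=\sum_{m\ge0}q^{m\ell}\prod_{r=1}^{\ell-1}(1-q^{m+r})
=\sum_{m\ge 0}q^{m\ell}\,\frac{\product[1,m+\ell-1]}{\product[1,m]} .
\]

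\textbf{Step 3: apply the $q$-binomial theorem.} Factor out $\product[1,\ell-1]$. The remaining ratio $\product[1,m+\ell-1]/(\product[1,m]\,\product[1,\ell-1])$ is the Gaussian binomial coefficient $\binom{m+\ell-1}{m}_q$. The classical identity
\[
\sum_{m\ge0}\binom{m+\ell-1}{m}_q z^m=\prod_{j=0}^{\ell-1}\frac{1}{1-zq^j},\qquad |z|<1,
\]
applied with $z=q^\ell$, yields exactly $\product[1,\ell-1]/\product[\ell,2\ell-1]$. If a self-contained argument is preferred, this identity follows by induction on $\ell$. Write $F_\ell(z)$ for the left-hand side. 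The Pascal-type relation $\binom{m+\ell-1}{m}_q=\binom{m+\ell-2}{m}_q+q^{\ell-1}\binom{m+\ell-2}{m-1}_q$ gives $F_\ell(z)=F_{\ell-1}(z)+zq^{\ell-1}F_\ell(z)$, that is, $F_\ell(z)=F_{\ell-1}(z)/(1-zq^{\ell-1})$, with base case $F_1(z)=1/(1-z)$.

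The only real content is recognising the resulting series as the $q$-binomial series; Steps 1 and 2 are formal. The main care needed is in the index bookkeeping, namely that the exponents run over $r=1,\ldots,\ell-1$ and $j=0,\ldots,\ell-1$, so that the denominators come out as $1-q^{\ell},\ldots,1-q^{2\ell-1}$. The interchange of summations is justified by absolute convergence for $q\in(0,1)$.
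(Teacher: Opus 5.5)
Your proof is correct, and it takes a genuinely different route from the paper.

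\textbf{The paper's route.} The paper stays within finite algebra. It derives the recursion $\sumI^{(j)}_i=q^i\sumI^{(j-1)}_{i-1}+q^i\sumI^{(j-1)}_i$ by splitting on whether $1\in I$. It combines this with the partial-fraction identity
\[
\frac{1}{1-q^{\ell+i}}-\frac{q^{j+1}}{1-q^{\ell+i+j+1}}=\frac{1-q^{j+1}}{(1-q^{\ell+i})(1-q^{\ell+i+j+1})}
\]
to show that $\sum_{i}(-1)^iq^{ji}\sumI^{(\ell-j)}_i/\product[\ell+i,\ell+i+j]$ equals $(1-q^{j+1})$ times the same expression with $j$ replaced by $j+1$. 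Iterating $\ell-1$ times peels off $\product[1,\ell-1]$ and leaves $1/\product[\ell,2\ell-1]$.

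\textbf{Your route.} You expand each denominator geometrically and resum over $i$ using the generating function of $e_i(q,\dots,q^{\ell-1})=\sumI^{(\ell)}_i$. You then recognise the result as the $q$-binomial series $\sum_{m}\binom{m+\ell-1}{m}_q z^m=\prod_{j=0}^{\ell-1}(1-zq^j)^{-1}$ evaluated at $z=q^\ell$. Your inductive proof of that identity via $\binom{n}{k}_q=\binom{n-1}{k}_q+q^{n-k}\binom{n-1}{k-1}_q$ is correct. One small point: when solving $F_\ell=F_{\ell-1}+zq^{\ell-1}F_\ell$ you need $F_\ell(z)<\infty$. This holds because $\binom{m+\ell-1}{m}_q=\product[m+1,m+\ell-1]/\product[1,\ell-1]\le 1/\product[1,\ell-1]$; alternatively, argue with formal power series in $z$.

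\textbf{Comparison.} The paper's argument is an identity of rational functions, so it needs no convergence, but it costs heavy index bookkeeping. Yours is shorter and explains where the product comes from. It uses $q\in(0,1)$ only for harmless geometric expansions, which the lemma assumes anyway.

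Your approach also suggests a simplification of the paper's structure. Your Step~2 shows the left-hand side is exactly $\sum_{m\ge0}q^{m\ell}\prod_{r=1}^{\ell-1}(1-q^{m+r})$. This is the untruncated version of the sum in Lemma~\ref{Lem:AuxiliarySum}. So the paper's expansion into $\sumI$ there, followed by the recollapse here, could be replaced by one application of the $q$-binomial theorem plus a direct bound on the tail $m>k-\ell+1$.
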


\begin{lemma}\label{Lem:AuxiliarySum} Let $q =q(n)\in (0,1)$ and $k\ge \ell -\log_q n$. Then, 
	\[\sum_{j=0}^{k-\ell+1}\left(q^{\ell}\right)^j\prod_{r=1}^{\ell-1}(1-q^{j+r})
	=(1+O(n^{-\ell}))\frac{1}{1-q^\ell}\prod_{i=1}^{\ell-1}\frac{(1-q^i)}{(1-q^{\ell+i})}.\]
\end{lemma}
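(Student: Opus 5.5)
Write $S(m)=\sum_{j=0}^{m}(q^\ell)^j\prod_{r=1}^{\ell-1}(1-q^{j+r})$. The plan is to evaluate the infinite sum $S(\infty)$ in closed form and then show that the tail $j>k-\ell+1$ is negligible.

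For the tail: every summand is nonnegative and at most $q^{\ell j}$, so
\[
S(\infty)-S(k-\ell+1)\le \sum_{j\ge k-\ell+2}q^{\ell j}=\frac{q^{\ell(k-\ell+2)}}{1-q^\ell}.
\]
The hypothesis $k\ge \ell-\log_q n$ gives $q^{k-\ell}\le q^{-\log_q n}=n^{-1}$, hence $q^{\ell(k-\ell+2)}\le n^{-\ell}$. Since the claimed main term is
\[
M=\frac{1}{1-q^\ell}\prod_{i=1}^{\ell-1}\frac{1-q^i}{1-q^{\ell+i}},
\]
the tail is at most $n^{-\ell}/(1-q^\ell)$. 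To get a \emph{relative} error $O(n^{-\ell})$, compare with $M$. We have $\prod_{i=1}^{\ell-1}(1-q^i)/(1-q^{\ell+i})\ge \prod_{i=1}^{\ell-1}(1-q^i)$, which may be small when $q\to 1$. So the crude bound needs sharpening. In the tail, $q^{j+r}\ge q^{k}$ is not small, so I would not rely on it. Instead I would use $1-q^{j+r}\le 1$ and bound the ratio more cleverly. Alternatively, I would keep the exact identity below, $S(\infty)-S(m)=q^{\ell(m+1)}\cdot(\text{positive expression}\le M)$. This identity follows from the recursive structure, and it yields tail $\le q^{\ell(m+1)}M\le n^{-\ell}M$ directly. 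This is the step I expect to be delicate, and I would aim to prove the identity $S(\infty)-S(m)\le q^{\ell(m+1)}S(\infty)$. That inequality holds termwise after the shift $j\mapsto j+m+1$, because $\prod_r(1-q^{j+m+1+r})$ is comparable to, but larger than, $\prod_r(1-q^{j+r})$. So the tail is $q^{\ell(m+1)}\sum_j q^{\ell j}\prod_r(1-q^{j+m+1+r})$, and I need to bound this by $O(M)$. Using $1-q^{j+m+1+r}\le 1$ gives $1/(1-q^\ell)$, which differs from $M$ by the factor $\prod_{i=1}^{\ell-1}(1-q^i)/(1-q^{\ell+i})$. That factor is $\ge \prod_i i/(\ell+i)\cdot(1+o(1))$ when $q\to1$, and bounded below by a constant when $q$ is bounded away from $1$, because $(1-q^i)/(1-q^{\ell+i})\ge i/(\ell+i)$ always holds (concavity of $x\mapsto 1-q^x$ through the origin). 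Hence the tail is at most $n^{-\ell}\cdot\prod_i\frac{\ell+i}{i}\cdot M=O(n^{-\ell})M$ since $\ell$ is constant, which resolves the issue cleanly.

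For the closed form of $S(\infty)$: expand $\prod_{r=1}^{\ell-1}(1-q^{j+r})=\sum_{I\subseteq[\ell-1]}(-1)^{|I|}q^{j|I|+s(I)}$. Summing the geometric series in $j$ gives
\[
S(\infty)=\sum_{i=0}^{\ell-1}(-1)^i\frac{\sumI^{(\ell)}_i}{1-q^{\ell+i}}=\sum_{i=0}^{\ell-1}(-1)^{i}\frac{\sumI_{i}^{(\ell)}}{\product[\ell+i,\ell+i]},
\]
which by Lemma~\ref{Lem:Qk:SumIProp} equals $\product[1,\ell-1]/\product[\ell,2\ell-1]=M$. (If Lemma~\ref{Lem:Qk:SumIProp} is not yet available, it can be proved independently via the $q$-binomial identity $\sumI^{(\ell)}_i=q^{i(i+1)/2}\binom{\ell-1}{i}_q$ and a $q$-Chu–Vandermonde/partial fractions argument, but I would simply invoke it.) Combining this with the tail bound gives $S(k-\ell+1)=M-O(n^{-\ell})M=(1+O(n^{-\ell}))M$.
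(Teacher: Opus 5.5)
Your proof is correct, and its core matches the paper's. Both use the subset expansion $\prod_{r=1}^{\ell-1}(1-q^{j+r})=\sum_{I\subseteq[\ell-1]}(-1)^{|I|}q^{j|I|+s(I)}$, sum the geometric series, and apply Lemma~\ref{Lem:Qk:SumIProp}.

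The difference is in how the truncation at $j=k-\ell+1$ is handled. The paper sums the finite geometric series exactly, getting $\sum_{i}(-1)^i\sumI^{(\ell)}_i\bigl(1-(q^{\ell+i})^{k-\ell+2}\bigr)/(1-q^{\ell+i})$. It then pulls a single factor $1+O(n^{-\ell})$ out of this alternating sum. That step tacitly needs the alternating sum $M$ to be comparable to the sum of the absolute values of its terms, which is $O(1/(1-q^\ell))$. In other words, it needs a lower bound $M=\Omega(1/(1-q^\ell))$, which the paper leaves implicit.

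You instead identify the full infinite series with $M$. You bound the tail, a series of nonnegative terms, by $q^{\ell(k-\ell+2)}/(1-q^\ell)\le n^{-\ell}/(1-q^\ell)$. You then supply exactly the missing lower bound through the concavity inequality $(1-q^i)/(1-q^{\ell+i})\ge i/(\ell+i)$. This gives $M\ge\binom{2\ell-1}{\ell}^{-1}(1-q^\ell)^{-1}$, so the relative error is at most $\binom{2\ell-1}{\ell}n^{-\ell}$, uniformly in $q\in(0,1)$. Your route is therefore slightly more explicit than the paper's at the one point where care is needed.

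You should, however, delete the detour in the middle of your tail argument. There is no identity ``from the recursive structure'', and the inequality $S(\infty)-S(m)\le q^{\ell(m+1)}S(\infty)$ is false. After the shift $j\mapsto j+m+1$, each factor $1-q^{j+m+1+r}$ exceeds $1-q^{j+r}$, as you note yourself. The termwise comparison therefore gives the reverse inequality, and strictly so for $\ell\ge 2$.

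For instance, for $\ell=2$ the tail equals $q^{2m+2}/(1-q^2)-q^{3m+4}/(1-q^3)$. By contrast, $q^{2(m+1)}S(\infty)=q^{2m+2}/(1-q^2)-q^{2m+3}/(1-q^3)$ is strictly smaller. So the shortcut ``tail $\le q^{\ell(m+1)}M$'' is wrong.

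The bound you actually finish with is correct and is all the lemma needs. It uses $1-q^{j+m+1+r}\le 1$ together with the constant $\binom{2\ell-1}{\ell}$.
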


\begin{proof}[Proof of Lemma~\ref{Lem:Qk:SumIProp}]
	First we prove some auxiliary equalities
	\begin{equation}\label{Eq:Sumtricks}	\begin{aligned}
			\sumI^{(j)}_{0}&=1, &&\text{ for all }j\ge 1;\\
			\sumI^{(j)}_{i}&=q^{i}\,\sumI^{(j-1)}_{i-1} + q^{i}\,\sumI^{(j-1)}_{i}, &&\text{ for } 1 \le i \le j-2\\
			\sumI^{(i)}_{i-1}&=q^{i-1}\sumI^{(i-1)}_{i-2}, &&\text{ for }i\ge 2.
		\end{aligned} 
	\end{equation}
	The first one follows directly from the definition \eqref{Eq:Def:SsI} of $\sumI^{(j)}_{i}$, 
	\[
	\sumI^{(j)}_{0}=q^{s(\emptyset)}=1.
	\]
	To obtain the second equality, we divide each set $I$ into two subsets, based on whether or not they contain $1$, and get
	\begin{equation*}
		\begin{split}
			\sumI^{(j)}_{i}
			&=\sum_{I\subseteq [j-1],|I|=i, 1\in I}q^{1+\sum_{r\in I\setminus\{1\}}r} + \sum_{I\subseteq [j-1],|I|=i, 1\notin I}q^{\sum_{r\in I}r} \\
			&=\sum_{I\subseteq [j-1],|I|=i, 1\in I}q^{i+\sum_{r\in I\setminus\{1\}}(r-1)} + \sum_{I\subseteq [j-1],|I|=i, 1\notin I}q^{i+\sum_{r\in I}(r-1)}\\ 
			&=\sum_{I\subseteq [j-2],|I'|=i-1}q^iq^{\sum_{r'\in I'}r'} + \sum_{I'\subseteq [j-2],|I'|=i}q^iq^{\sum_{r'\in I'}r'}\\ 
			&=q^{i}\,\sumI^{(j-1)}_{i-1} + q^{i}\,\sumI^{(j-1)}_{i}.
		\end{split}
	\end{equation*}
	Similarly, the third equality follows from \eqref{Eq:Def:SsI}, as for $i\ge 2$
	\begin{equation*}
		\sumI^{(i)}_{i-1}=q^{i-1+\sum_{r\in [i-2]}r}=q^{i-1}\sumI^{(i-1)}_{i-2}.
	\end{equation*} 
	We now use the three identities from \eqref{Eq:Sumtricks} to prove the following relation for $j=0,1,\ldots,\ell-2$
	\begin{equation}\label{Eq:Qk:InductionStep}
		\sum_{i=0}^{(\ell-1-j)}(-1)^{i}q^{ji}\frac{\sumI_{i}^{(\ell-j)}}{\product[\ell+i,\ell+i+j]}
		=
		(1-q^{j+1})\sum_{i=0}^{\ell-1-(j+1)}(-1)^{i}q^{(j+1)i}\frac{\sumI_{i}^{(\ell-(j+1))}}{\product[\ell+i,\ell+i+(j+1)]}.
	\end{equation}
	Indeed, for $j=0,1,\ldots,\ell-2$, expanding the sum on the left-hand side of \eqref{Eq:Qk:InductionStep} and applying \eqref{Eq:Sumtricks} gives
	\begin{align*}
		\sum_{i=0}^{\ell-1-j}(-1)^{i}q^{ji}&\frac{\sumI_{i}^{(\ell-j)}}{\product[\ell+i,\ell+i+j]}\\
		&=(-1)^{0}q^{0}\frac{\sumI_{0}^{(\ell-(j+1))}}{\product[\ell,\ell+j]}
		+\sum_{i=1}^{\ell-1-(j+1)}(-1)^{i}q^{ji}\frac{q^{i}\,\sumI_{i}^{(\ell-(j+1))} + q^{i}\,\sumI_{i-1}^{(\ell-(j+1))}}{\product[\ell+i,\ell+i+j]}\\
		&\qquad+(-1)^{\ell-1-j}q^{j(\ell-1-j)}\frac{q^{\ell-1-j}\sumI_{\ell-1-(j+1)}^{(\ell-(j+1))}}{\product[2\ell-1-j,2\ell-1]}
		\\
		&=\sum_{i=0}^{\ell-1-(j+1)}(-1)^{i}q^{ji} 
		\frac{q^{i}\,\sumI_{i}^{(\ell-(j+1))}}{\product[\ell+i,\ell+i+j]}   +\sum_{i=1}^{\ell-(j+1)}(-1)^{i}q^{ji}\frac{q^{i}\,\sumI_{i-1}^{(\ell-(j+1))}}{\product[\ell+i,\ell+i+j]}
		\\
		&=\sum_{i=0}^{\ell-1-(j+1)}(-1)^{i}q^{ji}\left[
		\frac{q^{i}\,\sumI_{i}^{(\ell-(j+1))}}{\product[\ell+i,\ell+i+j]}
		-
		\frac{q^{j+i+1}\,\sumI_{i}^{(\ell-(j+1))}}{\product[\ell+i+1,\ell+i+1+j]}
		\right]\\
		&=\sum_{i=0}^{\ell-1-(j+1)}
		(-1)^{i}
		\frac
		{q^{(j+1)i}\sumI_{i}^{(\ell-(j+1))}}
		{\product[\ell+i+1,\ell+i+j]}
		\left[
		\frac{1}{1-q^{\ell+i}}
		-
		\frac{q^{j+1}}{1-q^{\ell+i+1+j}}
		\right]
		\\
		&=\sum_{i=0}^{\ell-1-(j+1)}
		(-1)^{i}
		\frac
		{q^{(j+1)i}\sumI_{i}^{(\ell-(j+1))}}
		{\product[\ell+i+1,\ell+i+j]}
		\left[
		\frac{1-q^{\ell+i+1+j}-q^{j+1}+  q^{j+1+\ell+i}}{(1-q^{\ell+i})(1-q^{\ell+i+1+j})}
		\right]
		\\
		&=(1-q^{j+1})\sum_{i=0}^{\ell-1-(j+1)}(-1)^{i}q^{(j+1)i}\frac{\sumI_{i}^{(\ell-(j+1))}}{\product[\ell+i,\ell+i+j+1]}.
	\end{align*}

	Applying \eqref{Eq:Qk:InductionStep} $\ell-1$ times  gives
	\begin{align*}
		\sum_{i=0}^{\ell-1}(-1)^{i}\frac{\sumI_{i}^{(\ell)}}{\product[\ell+i,\ell+i]} &=
		(1-q)(1-q^2)\ldots(1-q^{\ell-1})\sum_{i=0}^{0}(-1)^{i}q^{(\ell-1)i}\frac{\sumI_{i}^{(1)}}{\product[\ell+i,\ell+i+\ell-1]}\\
		&= 		\frac{\product[1,\ell-1]}{\product[\ell,2\ell-1]}  ,
	\end{align*} 	
	which is the statement of the lemma.
\end{proof}

\begin{proof}[Proof of Lemma~\ref{Lem:AuxiliarySum}]
	In the proof we use $\sumI^{(j)}_{i}$ introduced in \eqref{Eq:Def:SsI} 
	
	\begin{align*}
		\sum_{j=0}^{k-\ell+1}\left(q^{\ell}\right)^j\prod_{r=1}^{\ell-1}(1-q^{j+r})
		&=\sum_{j=0}^{k-\ell+1}\left(q^{\ell}\right)^j\sum_{I\subseteq [\ell-1]}(-1)^{|I|}\prod_{r\in I}q^{j+r}\\
		&=\sum_{j=0}^{k-\ell+1}\left(q^{\ell}\right)^j
		\sum_{i=0}^{\ell-1}(-1)^i(q^{j})^i\sum_{I\subseteq [\ell-1], |I|=i}q^{s(I)}\\
		&=\sum_{i=0}^{\ell-1}(-1)^i\sumI^{(\ell)}_{i}\sum_{j=0}^{k-\ell+1}\left(q^{\ell+i}\right)^j\\
		&=\sum_{i=0}^{\ell-1}(-1)^i\sumI^{(\ell)}_{i}\frac{1-(q^{\ell+i})^{k-\ell+2}}{1-q^{\ell+i}}.
	\end{align*}
	
	Observe that, for $q=q(n)\in (0,1)$, $k\ge \ell -\log_q n$,  $0\le i\le \ell-1$, and $\ell=O(1)$ we have
	\[
	(q^{\ell+i})^{k-\ell+2}\le  q^{\ell(k-\ell)}  \leq    n^{-\ell}.
	\]
	Therefore
	\[
	\sum_{j=0}^{k-\ell+1}\left(q^{\ell}\right)^j\prod_{r=1}^{\ell-1}(1-q^{j+r})
	= 
	(1+ O(n^{-\ell}) )\sum_{i=0}^{\ell-1}(-1)^i\frac{\sumI^{(\ell)}_{i}}{1-q^{\ell+i}}, 
	\]and so the result follows by applying Theorem~\ref{Lem:Qk:SumIProp}.
\end{proof}

\subsection{Proof of Theorem~\ref{Thm:Qk:value}}

\begin{proof}[Proof of Theorem~\ref{Thm:Qk:value}]
	
	Events $\cA[0,i]$, $i=0,1,2,\ldots$, are mutually exclusive and $\cB[0]=\bigcup_{i\in \NN_0}\cA[0,i]$. Therefore
	\begin{multline}\label{Eq:Q0}
		\bB_{0}=\Pra{\cB[0]}=\sum_{i=0}^{\infty}\Pra{\cA[0,i]}=\\=\sum_{i=0}^{\infty}\left(\prod_{j=1}^{i}\Pra{Z_j>\ell}\right)\Pra{\cA[0,0]}
		=\sum_{i=0}^{\infty}(q^\ell)^i\Pra{\cA[0,0]}=\frac{\productO}{1-q^\ell},
	\end{multline}
	where $i$ represents the number of times that $Z_j>\ell$ before in the Mallows infinite process one chooses the first number in $[\ell]$. By convention we set $\prod_{j=1}^0a_j=1$.
	
	Now let us assume that $k\ge 1$.
	Before we study a general formula for $\bB_{k}$ we start with the value of $\Pra{\cA[k,0]}$. By the properties of the geometric distribution $\operatorname{Geom}(1-q)$ we have 
	\begin{equation}\label{Eq:Qk:Ak0}
		\begin{split}
			\Pra{\cA[k,0]}
			&=\sum_{\sigma\in \cS}\prod_{j=1}^{\ell}\Pra{Z_j=k+\sigma(j)-|\sigma([j-1])\cap [\sigma(j)-1]|}\\
			&=\sum_{\sigma\in \cS}\prod_{j=1}^{\ell}q^k\Pra{Z_j=\sigma(j)-|\sigma([j-1])\cap [\sigma(j)-1]|}\\
			&=q^{k\ell}\productO.
		\end{split}
	\end{equation}
	
	Now, we determine the recursive formula for $\bB_{k}$. As before, $i$ counts the number of times the geometric random variable $Z_j$, $j=1,2,\ldots$, in the infinite Mallows process, takes values greater than $k+\ell$, before it takes a value at most $k+\ell$.
	\begin{align}\nonumber
		\bB_{k}
		&=\sum_{i=0}^{\infty}\left(\prod_{j=1}^{i}\Pra{Z_j>k+\ell}\right)\left(\Pra{\cA[k,0]}+\Pra{Z_{i+1}\le k}\bB_{k-1}\right)\\
		\nonumber
		&=\sum_{i=0}^{\infty}(q^{\ell+k})^i\left(\Pra{\cA[k,0]}+(1-q^k)\bB_{k-1,l}\right)\\
		\label{Eq:Qk:recurrence}
		&=\frac{q^{k\ell}\productO}{1-q^{\ell+k}}+\frac{(1-q^k)}{1-q^{k+\ell}}\bB_{k-1},
	\end{align} 
	where we used \eqref{Eq:Qk:Ak0} in the last line.
	By applying \eqref{Eq:Qk:recurrence} one can easily show by induction that, for any $k\in \bN_0$,
	\begin{equation}\label{Eq:Qk:Sum0}
		\bB_{k}=\sum_{i=0}^k q^{i\ell}\frac{\product[i+1,k]\productO}{\product[\ell+i,\ell+k]}.
	\end{equation}
	
	Now we split the sum \eqref{Eq:Qk:Sum0} into two. 
	\begin{equation}\label{Eq:Qk:Sum01}
		\bB_{k}=\sum_{i=0}^{k-\ell+1} q^{i\ell}\frac{\product[i+1,k]\productO}{\product[\ell+i,\ell+k]}+\sum_{i=k-\ell+2}^{k} q^{i\ell}\frac{\product[i+1,k]\productO}{\product[\ell+i,\ell+k]}.
	\end{equation}

	The first sum in \eqref{Eq:Qk:Sum01} contains the terms with $i\le k-\ell+1$. For $i\le k-\ell+1$ we have
	\begin{equation}\label{Eq:Qk:cancelation}
		\frac{\product[i+1,k]}{\product[\ell+i,\ell+k]}
		=
		\frac{\product[i+1,i+\ell-1]\product[i+\ell,k]}{\product[i+\ell,k]\product[k+1,\ell+k]}
		=\frac{\product[i+1,i+\ell-1]}{\product[k+1,\ell+k]}.
	\end{equation}
	Moreover, we consider $k\ge \ell -\log_q n$. For them
	\[
	q^k\le q^{\ell-\log_q n}\le n^{-1}.\]
	Therefore
	\begin{equation}\label{Eq:Qk:bottomtems1}  
		1\ge\product[k+1,\ell+k] = \prod_{j=1}^{\ell}(1-q^{k+i}) \ge (1-q^{k})^\ell \ge (1-n^{-1})^\ell\ge 1-\ell n^{-1}=1+O(n^{-1}).
	\end{equation}
	Similarly, for the second sum in \eqref{Eq:Qk:Sum01} we have $i=k-\ell+2,\ldots,k$.  Using \eqref{Eq:Qk:bottomtems1} 
	\begin{equation}\label{Eq:Qk:bottomtems2}  
		1\ge \product[\ell+i,\ell+k]\ge \product[k+1,\ell+k] = 1+O(n^{-1}).
	\end{equation}
	In addition, for $i=k-\ell+2,\ldots,k$, since $k\ge \ell -\log_q n$,
	\begin{equation}\label{Eq:Qk:topterms}
		q^{i\ell}\le q^{(k-\ell)\ell}\le n^{-\ell}=O(n^{-\ell}). 
	\end{equation}
	And obviously $\product[\ell+i,\ell+k]\le 1$. Therefore
	we apply \eqref{Eq:Qk:cancelation} and \eqref{Eq:Qk:bottomtems1} for the first sum and \eqref{Eq:Qk:bottomtems2} and \eqref{Eq:Qk:topterms} for the second sum in \eqref{Eq:Qk:Sum01} to obtain
	\begin{equation}\label{Eq:Qk:Sum1}
		\begin{split}
			\bB_{k}
			&=\productO\sum_{i=0}^{k-\ell+1}\left(q^{\ell}\right)^i\frac{\product[i+1,i+\ell-1]}{\product[k+1,k+\ell]}+\sum_{i=k-\ell+2}^{k}q^{i\ell}\frac{\product[i+1,k]\productO}{\product[\ell+i,\ell+k]}\\
			&=(1+O(n^{-1}))\productO\sum_{i=0}^{k-\ell+1}\left(q^{\ell}\right)^i\product[i+1,i+\ell-1]+O(n^{-\ell}\productO)\\
			&=(1+O(n^{-1}))\productO\left(\sum_{j=0}^{k-\ell+1}\left(q^{\ell}\right)^j\prod_{r=1}^{\ell-1}(1-q^{j+r})\right)+O(n^{-\ell}\productO).
		\end{split}
	\end{equation}
	The first part of Theorem~\ref{Thm:Qk:value} follows immediately from Lemma~\ref{Lem:AuxiliarySum} applied to the above equation.
	
	Now we assume that $1-q=o(1)$. Then for any $j=O(1)$ we have
	\[
	1-q^j=1-[1-(1-q)]^j=j(1-q)(1+O(1-q)).
	\]
	Therefore
	\begin{align}\nonumber
		\bB_{k}
		&=(1+O(n^{-1}))\frac{\productO}{1-q^\ell}\prod_{i=1}^{\ell-1}\frac{(1-q^i)}{(1-q^{\ell+i})}+O(\productO n^{-\ell})\\	\nonumber		&=(1+O(n^{-1}+O(1-q)))\frac{\productO}{\ell(1-q)}\frac{(\ell-1)!\ell!}{(2\ell-1)!}+O(\productO n^{-\ell})\\
		\label{Eq:q:tendsto1}			&=(1+O(n^{-1}+O(1-q)))\frac{\productO}{1-q}\frac{[(\ell-1)!]^2}{(2\ell-1)!}.
	\end{align}
\end{proof}

\subsection{Auxiliary lemmas}

Now we prove two lemmas that will be used in some of the remaining proofs of the article. Since some of them relate to Theorem~\ref{Thm:ExpectedValueProcess} and some proofs use parts of the reasoning utilised before in this section, it is convenient to present them here.

\begin{lemma}\label{Lem:ProductO}
	Let $\ell\ge 2$ and $\emptyset\neq\cS\subseteq \bS_\ell$. For any $q\in (0,1)$ we have 
	\[	q^{\ell(\ell-1)/2}(1-q)^\ell\le \productO\le \prod_{i=1}^{\ell}(1-q^i)=
	\product(\bS_\ell).\]
	Moreover for $q=q(n)$ such that $1-q=o(1)$ we have
	\[
	\productO = (1+o(1)) |\cS|(1-q)^{\ell}
	\quad\text{and}\quad	\product(\bS_\ell)=(1+o(1)) \ell! (1-q)^{\ell}.
	\]
\end{lemma}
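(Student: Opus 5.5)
The plan is to work directly from the explicit formula \eqref{Eq:Def:LambdaS}. For $\sigma\in\bS_\ell$ write $m_j(\sigma)=\sigma(j)-|\sigma([j-1])\cap[\sigma(j)-1]|$. This is the rank of the value $\sigma(j)$ among the values in $[\ell]\setminus\sigma([j-1])$ that are still free, so $1\le m_j(\sigma)\le \ell-j+1$. Since $Z_j\sim\operatorname{Geo}(1-q)$, we have $\Pra{Z_j=m}=q^{m-1}(1-q)$. Hence each summand of $\productO$ equals
\[
\prod_{j=1}^{\ell}q^{m_j(\sigma)-1}(1-q)=q^{\sum_j(m_j(\sigma)-1)}(1-q)^{\ell}.
\]
The exponent satisfies $\sum_j(m_j(\sigma)-1)\le\sum_{j=1}^{\ell}(\ell-j)=\ell(\ell-1)/2$. (In fact it equals $\mathrm{inv}(\sigma)$, but only the upper bound is needed.)

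\textbf{Lower bound.} Since $\cS\neq\emptyset$, pick any single $\sigma\in\cS$ and keep only its summand. Using $q\in(0,1)$ and the exponent bound, this gives $\productO\ge q^{\ell(\ell-1)/2}(1-q)^\ell$.

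\textbf{Upper bound.} Note that $\productO=\Pra{\cA[0,0]}$ by \eqref{Eq:Def:LambdaBis}. Enlarging $\cS$ to $\bS_\ell$ only enlarges the event, or equivalently only adds nonnegative summands, so $\productO\le\product(\bS_\ell)$. The identity $\product(\bS_\ell)=\prod_{i=1}^{\ell}(1-q^i)$ is \eqref{Eq:Def:LambdaEll}.

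\textbf{Asymptotics when $1-q=o(1)$.} Each exponent $\sum_j(m_j(\sigma)-1)$ is an integer in $[0,\ell(\ell-1)/2]$, which is bounded because $\ell$ is constant. Therefore $q^{\sum_j(m_j(\sigma)-1)}=1+o(1)$ uniformly over $\sigma$. Summing over the $|\cS|$ terms gives $\productO=(1+o(1))|\cS|(1-q)^\ell$. Taking $\cS=\bS_\ell$ gives $\product(\bS_\ell)=(1+o(1))\ell!(1-q)^\ell$. Alternatively, this follows from $1-q^i=i(1-q)(1+O(1-q))$ applied to each factor of \eqref{Eq:Def:LambdaEll}.

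The only point needing care is the bound $m_j(\sigma)\le \ell-j+1$: at step $j$ exactly $\ell-j+1$ values of $[\ell]$ remain unused, and $m_j(\sigma)$ is the rank of $\sigma(j)$ among them. Beyond this, the proof is bookkeeping.
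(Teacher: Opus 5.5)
Your proposal is correct and is essentially the paper's proof. The paper also writes each summand as $q^{\sum_s i_{\sigma,s}-\ell}(1-q)^\ell$ with $1\le i_{\sigma,s}\le \ell+1-s$, uses a single $\sigma\in\cS$ for the lower bound and $\product(\{\sigma\})\le\productO\le\product(\bS_\ell)$ for the upper bound, and gets the asymptotics from the boundedness of the exponents.
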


\begin{proof}[Proof of Lemma~\ref{Lem:ProductO}]
	For any $\sigma\in \bS_\ell$ there is a unique sequence $(i_{\sigma,s})_{s\in [\ell]}$, $i_{\sigma,s}\in [\ell]$ such that $Z_s=i_{\sigma,s}$, $s\in [\ell]$, generates a $(\{\sigma\},0,0)$--arrangement in $\barPi$. Moreover, for all $s\in [\ell]$, $1\le i_{\sigma,s}\le \ell+1-s$. Therefore, for $\sigma\in \bS_\ell$,
	\[
	\product(\{\sigma\})=\prod_{s=1}^{\ell}\left(q^{i_{\sigma,s}-1}(1-q)\right)
	=q^{(\sum_{s=1}^{\ell}i_{\sigma,s})-\ell}(1-q)^\ell\ge q^{(\sum_{s=1}^{\ell}s) - \ell}(1-q)^\ell\ge q^{\ell(\ell-1)/2}(1-q)^\ell.
	\]	
	
	We get the result, as by definition of $\productO$, for any $\sigma\in \cS\subseteq \bS_\ell$, and \eqref{Eq:Def:LambdaEll}
	\[
	\product(\{\sigma\})\le \productO\le \product(\bS_\ell).
	\] 
	The second part of the lemma follows, since for $1-q=o(1)$ and any $j=O(1)$ we have
	$
	q^{j}=(1+o(1)).
	$ 
	Moreover $|\bS_\ell|=O(1)$. Thus the definition of $\productO$ implies that
	\[
	\productO=\sum_{\sigma\in \cS}\product(\{\sigma\})=\sum_{\sigma\in \cS}q^{(\sum_{s=1}^{\ell}i_{\sigma,s})-\ell}(1-q)^\ell=(1+o(1))|\cS| (1-q)^\ell.
	\]
\end{proof}

\begin{lemma}\label{Lem:QkLEQ0} Let $q \in (0,1)$, $\ell\ge 2$, and $\emptyset\neq\cS\subseteq \bS_\ell$. Then, 
	\[\forall_{k\ge 0}\ \bB_{k}=\Pra{\cB[k]}\le \bB_{0} = \frac{\productO}{1-q^{\ell}}\le \prod_{i=1}^{\ell-1}(1-q^i).\]
	Moreover for $\ell\ge 3$
	\[\forall_{\substack{k_1,k_2\ge 0\\ k_2-k_1\ge \ell}}\ \Pra{\cB[k_1]\cap \cB[k_2]}\le \bB_{0}^2 =  \left(\frac{\productO}{1-q^{\ell}}\right)^2,\]
	and for $q=q(n)$ such that $1-q=o(1)$
	\begin{align*}
		&\forall_{k\ge 0}&\ \bB_{k}=\Pra{\cB[k]}&\le (\ell-1)!(1-q)^{\ell-1}, &&\text{ for }\ell\ge 2,\quad \text{and}\\
		&\forall_{\substack{k_1,k_2\ge 0\\ k_2-k_1\ge \ell}}&\Pra{\cB[k_1]\cap \cB[k_2]}&\le\left((\ell-1)! (1-q)^{\ell-1}\right)^2, &&\text{ for }\ell\ge 3.
	\end{align*}
\end{lemma}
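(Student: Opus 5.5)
The first claim will come from the closed form \eqref{Eq:Qk:Sum0}, i.e.
\[
\bB_k=\productO\sum_{i=0}^{k} q^{i\ell}\frac{\product[i+1,k]}{\product[\ell+i,\ell+k]} .
\]
Rather than compare these sums term by term, I will argue by induction on $k$ through the recurrence \eqref{Eq:Qk:recurrence}. The base case is \eqref{Eq:Q0}, which gives $\bB_0=\productO/(1-q^\ell)$. Now assume $\bB_{k-1}\le \bB_0$. Then
\[
\bB_k\le \frac{q^{k\ell}\productO+(1-q^k)\bB_0}{1-q^{k+\ell}},
\]
so it suffices to check $q^{k\ell}\productO+(1-q^k)\bB_0\le (1-q^{k+\ell})\bB_0$. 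Substituting $\productO=(1-q^\ell)\bB_0$, this is equivalent to
\[
q^{k\ell}(1-q^\ell)\le q^k-q^{k+\ell}=q^k(1-q^\ell),
\]
which holds because $q^{k\ell}\le q^k$ for $\ell\ge 1$. The bound $\productO/(1-q^\ell)\le\prod_{i=1}^{\ell-1}(1-q^i)$ then follows at once from Lemma~\ref{Lem:ProductO} and \eqref{Eq:Def:LambdaEll}.

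For the joint bound with $k_2-k_1\ge\ell$ I will use the sequential construction. The event $\cB[k_1]$ concerns where the values $k_1+1,\dots,k_1+\ell$ are placed relative to one another. The plan is to condition on the full history up to the moment the largest value of the block $k_1+1,\dots,k_1+\ell$ has been placed, i.e.\ on $Z_1,\dots,Z_{T}$ with $T=\max_{i\le\ell}\barPi_{k_1+i}$. Since the values are placed in increasing order, I will condition on the configuration after all values $\le k_1+\ell$ are placed; $\cB[k_1]$ is measurable with respect to that information. After this moment, the unoccupied spots again form a copy of $\NN$, and the future variables $Z$ are independent geometric. The block $k_2+1,\dots,k_2+\ell$ lies strictly above $k_1+\ell$, so its arrangement is determined by the future placement. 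By relabelling values, the conditional probability of $\cB[k_2]$ equals the probability that a fresh Mallows infinite sequence has the block $k_2-k_1-\ell+1,\dots$ of values forming an $\cS$--arrangement, which is $\bB_{k_2-k_1-\ell}$.

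Some care is needed here. Conditioning on the positions of the values $\le k_1+\ell$ leaves the remaining ones in the fresh-$\NN$ picture (this is essentially Lemma~\ref{Lem:CoupligProcesses}). The relative order among the later values is governed only by their own $Z$'s counted over the free spots, and an $\cS$--arrangement only compares values of the second block with each other. I expect this to be the main obstacle to write rigorously. Once it is in place, the first part gives $\bB_{k_2-k_1-\ell}\le\bB_0$, hence $\Pra{\cB[k_1]\cap\cB[k_2]}\le \bB_{k_1}\bB_0\le\bB_0^2$. The hypothesis $\ell\ge3$ plays no real role beyond matching the statement.

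Finally, assume $1-q=o(1)$. By Lemma~\ref{Lem:ProductO}, $\productO\le\prod_{i=1}^{\ell}(1-q^i)$, so $\bB_0\le\prod_{i=1}^{\ell-1}(1-q^i)$. Each factor satisfies $1-q^i=(1-q)(1+q+\dots+q^{i-1})\le i(1-q)$, and therefore $\bB_0\le(\ell-1)!(1-q)^{\ell-1}$; this bound in fact holds for every $q$. Squaring it, together with the second part, yields the last inequality.
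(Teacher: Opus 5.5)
Your first and third parts are fine. The induction through \eqref{Eq:Qk:recurrence} is the paper's argument: your inequality $q^{k\ell}(1-q^\ell)\le q^k(1-q^\ell)$ is its term-pairing in compact form. The bound $1-q^i\le i(1-q)$ gives $\bB_0\le(\ell-1)!(1-q)^{\ell-1}$ for every $q$.

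The gap is in the joint bound, where you switch between two different events. In the paper, $\cB[k]$ says that the \emph{spots} $k+1,\dots,k+\ell$, i.e.\ the terms $\barPi_{k+1},\dots,\barPi_{k+\ell}$, carry \emph{some} block $j+1,\dots,j+\ell$ of consecutive values in $\cS$-order. This is the event for which \eqref{Eq:Q0} and \eqref{Eq:Qk:recurrence} were derived (and which you use in your first part), and the one to which the lemma is applied later. The display before Theorem~\ref{Thm:Qk:value} writes $(\cS,j,k)$, which suggests your reading, but it is not the operative one.

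For this event, your stopping time $T=\max_{i\le\ell}\barPi_{k_1+i}$ is the right one: it is the moment the spots $k_1+1,\dots,k_1+\ell$ are all filled, and $\cB[k_1]$ is determined by $Z_1,\dots,Z_T$. It is not determined by $Z_1,\dots,Z_{k_1+\ell}$, though. More seriously, $\cB[k_2]$ is not determined by what happens after $T$. That the spots $k_2+1,\dots,k_2+\ell$ lie to the right of $k_1+\ell$ says nothing about when they are filled. On $\cB[k_1]\cap\cB[k_2]$ the block at spots $k_2+1,\dots,k_2+\ell$ may consist of values \emph{smaller} than the block at spots $k_1+1,\dots,k_1+\ell$, and then it is completely placed before $T$.

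So conditioning on $T$ controls only the ordering in which the second block carries the larger values. Handling the other ordering symmetrically, by conditioning on the filling time of the second group of spots, yields just $2\bB_0^2$. Getting the constant $1$ requires summing the two orderings carefully, which is what the paper's recurrences for $\bB_{k_1,k_2}$ do. Already $\bB_{0,\ell}=\frac{1+q^{\ell^2}}{1+q^{\ell}}\bB_0^2$, where the summand $q^{\ell^2}$ comes from the reversed ordering. The paper also uses $\ell\ge 3$ there, in the final comparison $k_2\ell>k_2+2\ell$.

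Your argument is correct for the other event, ``the values $k+1,\dots,k+\ell$ occupy $\ell$ adjacent spots in $\cS$-order''. There, conditioning on $Z_1,\dots,Z_{k_1+\ell}$ works, with ``$\le$'' instead of ``$=$'': spots that are consecutive among the free ones need not be adjacent on the line, but the inclusion goes the right way. What is missing to turn this into the lemma is the inversion symmetry ${\mathbf \Pi}\stackrel{d}{=}{\mathbf \Pi}^{-1}$ of the Mallows process. It can be obtained, e.g., by letting $n\to\infty$ in $\varPi_n^{-1}\stackrel{d}{=}\varPi_n$, since $\operatorname{inv}(\sigma^{-1})=\operatorname{inv}(\sigma)$. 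Under inversion, the spot-indexed events for $\cS$ become, jointly in law, the value-indexed events for $\cS^{-1}=\{\sigma^{-1}:\sigma\in\cS\}$. Running your conditioning there and transporting back gives $\Pra{\cB[k_1]\cap\cB[k_2]}\le\bB_{k_1}\bB_{k_2-k_1-\ell}\le\bB_0^2$, and then indeed without using $\ell\ge3$. That would be a genuinely cleaner route than the paper's explicit two-block recurrences. As written, however, your step ``the first part gives $\bB_{k_2-k_1-\ell}\le\bB_0$'' identifies two different quantities without justification.
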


\begin{proof}[Proof of Lemma~\ref{Lem:QkLEQ0}]
	We prove the first inequality by induction. Set $k\ge 1$ and assume that $\bB_{k-1}\le \bB_{0}$. Let us recall that by \eqref{Eq:Q0} $\bB_{0}=\productO/(1-q^{\ell})$. In addition we use \eqref{Eq:Qk:recurrence} and the formula $1-x^i=(1-x)(1+x+\ldots+x^{i-1})$ to obtain 
	\begin{equation*}
		\begin{split}
			\bB_{k}
			&=\frac{q^{k\ell}\productO}{1-q^{\ell+k}}+\frac{(1-q^k)}{1-q^{k+\ell}}\bB_{k-1}\\
			&\le \frac{q^{k\ell}(1-q^{\ell})+1-q^k}{1-q^{k+\ell}}\bB_{0}\\
			&=\frac{1+q+\ldots+q^{k-1}+q^{k\ell}+q^{k\ell+1}+\ldots+q^{k\ell+\ell-1}}{1+q+\ldots+q^{k+\ell-1}}\bB_{0}\\
			&\le \bB_{0}.
		\end{split}
	\end{equation*} 
	To get the second equation, we apply \eqref{Eq:Q0}. The last inequality follows because, in the fraction in the second-to-last line, the number of terms in the sum in the numerator is the same as in the denominator. Moreover, these terms may be paired off in such a way that the terms in the numerator are at most equal to those in the denominator. 
	
	The proof of the second formula is similar to the proof for $\bB_k$. Let $\bB_{k_1,k_2}=\Pra{\cB[k_1]\cap \cB[k_2]}$. We use a similar reasoning to this used in the proof of \eqref{Eq:Q0}. First consider $\bB_{0,\ell}$. In the formula below the term $q^{2\ell i}$ is the probability that in the Mallows infinite process $Z_j>2\ell$ for $j=1,2,\ldots,i$, i.e. ${\mathbf \Pi}(j)>2\ell$ for $j=1,2,\ldots,i$.  In $\productO\bB_0$, the term $\productO$ is the probability that $\{i+1,\ldots,i+\ell\}$ form an $(\cS,0,i)$--arrangement in $\barPi$ and the term $\bB_0$ is the probability that $\{i+\ell+1,\ldots,i+2\ell\}$ form $(\cS,\ell,k)$--arrangement in $\barPi$, for some $k\ge i+\ell$.   In $q^{\ell^2}\productO\bB_0$, the term $q^{\ell^2}\productO$ is the probability that $\{i+1,\ldots,i+\ell\}$ form an $(\cS,\ell,i)$--arrangement in $\barPi$ and the term $\bB_0$ is the probability that $\{i+\ell+1,\ldots,i+2\ell\}$ form $(\cS,0,k)$--arrangement in $\barPi$, for some $k\ge i+\ell$. 
	\[
	\begin{split}
		\bB_{0,\ell}
		&=\sum_{i=0}^{\infty}q^{2\ell i}\left(\productO\bB_0+q^{\ell^2}\productO\bB_0\right)\\
		&=\frac{1+q^{\ell^2}}{1-q^{2\ell}}\productO\bB_0=\frac{1+q^{\ell^2}}{(1+q^\ell)(1-q^{\ell})}(1-q^{\ell})\bB_0^2\le \bB_0^2.
	\end{split}
	\]
	In the third equality we used $\eqref{Eq:Q0}$. 
	For the inductive step, we use recurrence formulas derived analogously to \eqref{Eq:Qk:recurrence}; cf. the above equation. We obtain
	for $k_1\ge 1$ and $k_2\ge k_1+\ell+1$
	\begin{align*}
		\bB_{k_1,k_1+\ell}&=\sum_{i=0}^{\infty}q^{i(k_1+2\ell)}\left((1-q^{k_1})\bB_{k_1-1,k_1-1+\ell}+q^{k_1\ell}\left(\productO\bB_{k_1}+q^{\ell^2}\productO\bB_{k_1}\right)\right)\\
		&=\frac{1}{1-q^{k_1+2\ell}}\left((1-q^{k_1})\bB_{k_1-1,k_1-1+\ell}+q^{k_1\ell}\left(\productO\bB_{k_1}+q^{\ell^2}\productO\bB_{k_1}\right)\right)\\
		\bB_{k_1,k_2}&=\sum_{i=0}^{\infty}q^{i(k_2+2\ell)}
		\Big((1-q^{k_1})\bB_{k_1-1,k_2-1}
		+q^{k_1\ell}\productO\bB_{k_2-\ell}\\
		&\quad\hspace{4.5cm}+
		q^{k_1+\ell}(1-q^{k_2-k_1+\ell})\bB_{k_1,k_2-1}
		+q^{k_2\ell}\productO\bB_{k_1}
		\Big)\\
		&=\frac{1}{1-q^{k_2+\ell}}
		\Big((1-q^{k_1})\bB_{k_1-1,k_2-1}
		+q^{k_1\ell}\productO\bB_{k_2-\ell}\\
		&\quad\hspace{4.5cm}+
		q^{k_1+\ell}(1-q^{k_2-k_1+\ell})\bB_{k_1,k_2-1}
		+q^{k_2\ell}\productO\bB_{k_1}
		\Big).\\
	\end{align*} 
	Therefore for $k_1\ge 1$
	\begin{align*}
		\bB_{k_1,k_1+\ell}&\le \frac{1}{1-q^{k_1+2\ell}}\left((1-q^{k_1})\bB_{0}^2+q^{k_1\ell}\productO\bB_{0}+q^{k_1\ell+\ell^2}\productO\bB_{0}\right)\\
		&=\frac{(1-q^{k_1})+q^{k_1\ell}(1-q^\ell)+q^{k_1\ell+\ell^2}(1-q^\ell)}{1-q^{k_1+2\ell}}\bB_{0}^2\\
		&=\frac{(1+\ldots+q^{k_1-1})+(q^{k_1\ell}+\ldots+q^{k_1\ell+\ell-1})+(q^{k_1\ell+\ell^2}+\ldots+q^{k_1\ell+\ell^2+\ell-1})}
		{(1+\ldots+q^{k_1-1})+(q^{k_1}+\ldots+q^{k_1+\ell-1})+(q^{k_1+\ell}+\ldots+q^{k_1+\ell+\ell-1})}\bB_{0}^2\\
		&\le \bB_{0}^2.
	\end{align*}
	For $k_2\ge k_1+\ell+1$ 
	\begin{align*}
		&\bB_{k_1,k_2}\le \frac{(1-q^{k_1})+q^{k_1\ell}(1-q^\ell)+q^{k_1+\ell}(1-q^{k_2-k_1+\ell})+q^{k_2\ell}(1-q^\ell)}{1-q^{k_2+\ell}}\bB_{0}^2\\
		&=\frac{
			(1+\ldots+q^{k_1-1})
			+(q^{k_1\ell}+\ldots+q^{k_1\ell+\ell-1})
			+(q^{k_1+\ell}+\ldots+q^{k_2+2\ell-1})
			+(q^{k_2\ell}+\ldots+q^{k_2\ell+\ell-1})}
		{(1+\ldots+q^{k_1-1})
			+(q^{k_1}+\ldots+q^{k_1+\ell-1})
			+(q^{k_1+\ell}+\ldots+q^{k_2+2\ell-1})
			+(q^{k_2+2\ell}+\ldots+q^{k_2+3\ell-1})}\bB_{0}^2\\
		&\le \bB_{0}^2.
	\end{align*}
	In the last inequality we have used the fact that for $\ell\ge 3$ and $k_2\ge k_1+\ell+1\ge \ell+1$ we have $k_2\ell\ge 3k_2> k_2+2\ell$.
\end{proof}

\subsection{Proof of Theorem~\ref{Thm:ExpectedValueProcess}}

\begin{proof}[Proof of Theorem~\ref{Thm:ExpectedValueProcess}]

	Let  $\ell\ge 2$, $q\in (0,1)$, $t(1-q)/\ln n\to \infty$ and $t\to\infty$ as $n\to\infty$, and $K=\ell-\log_q n$. Since $\ln(1-x)\le -x$, 
	we have
	\begin{equation}\label{Eq:K:Upper}
		K=\ell-\log_q n=\ell+\frac{\ln n}{-\ln [1-(1-q)]} \le \ell+\frac{\ln n}{(1-q)} = \ell+t \frac{\ln n}{t(1-q)}=o(t).
	\end{equation}
	Let $\Ind_i$ be the indicator random variable of the event~$\cB[i]$ that $(\barPi_{i+1},\ldots,\barPi_{i+\ell})$ is an $\cS$--arrangement. Then $\bE \Ind_i = \Pra{\cB[i]}=\bB_i$ and $\Arr_{\cS}(\barPi|_{[t]})=\sum_{i=0}^{t-\ell}\bE \Ind_i$. 
	By Lemma~\ref{Lem:QkLEQ0} and $\eqref{Eq:K:Upper}$ we get 
	\[
	\sum_{i=0}^{K-1}\bE \Ind_i = \sum_{i=0}^{K-1}\bB_{i}\le K\,  \bB_{0} = o\left(t\, \frac{\productO}{(1-q^\ell)}\right).
	\]
	Thus, by Theorem~\ref{Thm:Qk:value}, for any $q\in (0,1)$, $t\to\infty$, and $(1-q)t/\ln n\to \infty$  as $n\to \infty$,
	\[
	\begin{split}
		\bE \Arr_\cS(\barPi|_{[t]}) 
		&=\sum_{i=0}^{K-1}\bE\Ind_i + \sum_{i=K}^{t-\ell}\bE\Ind_i\\
		&=o\left(t\frac{\productO}{1-q^\ell}\right)+(t-\ell-K+1)(1+o(1))\frac{\productO}{1-q^{\ell}}\prod_{j=1}^{\ell-1}\frac{(1-q^j)}{(1-q^{\ell+j})}+O(tn^{-\ell})\\
		&=(1+o(1))\frac{\productO}{1-q^{\ell}}\left(\prod_{j=1}^{\ell-1}\frac{(1-q^j)}{(1-q^{\ell+j})}\right)t.
	\end{split}
	\]	
	We recall here that $(1-q^j)/(1-q^{\ell+j})=\Theta(1)$, which we used in the last equality.
	
	In particular, for $1-q=o(1)$, using the second part of Theorem~\ref{Thm:Qk:value} and the fact that by Lemma~\ref{Lem:ProductO} $\productO/(1-q^{\ell})=(1+o(1)) |\cS|(\ell-1)!(1-q)^{\ell-1}$ we get for $1-q=o(1)$
	\[
	\begin{split}
		\bE \Arr_\cS(\barPi|_{[t]}) 
		&=(1+o(1))\frac{\productO}{1-q^{\ell}}\left(\prod_{j=1}^{\ell-1}\frac{(1-q^j)}{(1-q^{\ell+j})}\right)t\\
		&=(1+o(1))\frac{|\cS|[(\ell-1)!]^2}{(2\ell-1)!}(1-q)^{\ell-1}t.
	\end{split}
	\]	
\end{proof}

\section{Proof of Theorem~\ref{Thm:Main:Expected} }

Let $\ell\ge 2$ be a constant integer, $\emptyset\neq\cS\subseteq \bS_\ell$ and $\barPi\sim \Mallows(\NN,q)$. 
For the Mallows infinite sequence $\barPi\sim \Mallows(\NN,q)$,  let us define event
\begin{equation}\label{Eq:E:Def}
	\cE=\{\barPi([n-8\Delta])\subseq \barPi|_{[t-4\Delta]}\subseq \barPi([n])\}.
\end{equation}
Lemma~\ref{Lem:Inclusion0} and Lemma~\ref{Lem:Inclusion2}, with $x=4$, imply
\begin{equation}\label{Eq:E:Prob}
	\Pra{\cE'}\le 3n^{-3}.
\end{equation}
If $\cE$ occurs, then by Lemma~\ref{Lem:Inclusion:Implication}
\begin{equation}\label{Eq:Arr:Coup}
	\Arr_{\cS}(\barPi|_{[n-4\Delta]})\le \Arr_{\cS}(\barPi([n]))\le \Arr_{\cS}(\barPi|_{[n-4\Delta]})+\Arr_{\cS}(\barbarPi([n-12\Delta-\ell+1,n])).
\end{equation}
Corollary~\ref{Cor:varPi:to:barPi}, \eqref{Eq:Arr:Coup}, and \eqref{Eq:E:Prob} imply
\[
\begin{split}
	\bE(\Arr_{\cS}(\varPi_n))
	=
	\bE(\Arr_{\cS}(\barPi([n])))
	&\ge
	\bE(\Arr_{\cS}(\barPi([n])))\Ind_{\cE})
	\\
	&\ge 
	\bE(\Arr_{\cS}(\barPi|_{[n-4\Delta]})(1-\Ind_{\cE'}))\\
	&\ge \bE(\Arr_{\cS}(\barPi|_{[n-4\Delta]}))-n\bE \Ind_{\cE'}\\
	&\ge \bE\Arr_{\cS}(\barPi|_{[n-4\Delta]})-3n^{-2}.
\end{split}	
\]
In addition, by Lemma~\ref{Lem:CoupligProcesses} $\barbarPi([n-12\Delta-\ell,n])\sim\Mallows (12\Delta+\ell,q)$ and by Theorem~\ref{Thm:Pinsky} with Remark~\ref{Rem:Thm:Pinsky} (since $\bE\Arr_{\cS}(\varPi_t)\le \bE\Arr_{\bS_\ell}(\varPi_t)$) 
\[\bE \barbarPi([n-12\Delta-\ell,n]) \le (1+o(1)) (\ell-1)!(1-q)^{\ell-1}(12\Delta+\ell)=O\left(\Delta(1-q)^{\ell-1}\right). \]
Therefore Corollary~\ref{Cor:varPi:to:barPi}, \eqref{Eq:Arr:Coup},  the above equation, and \eqref{Eq:E:Prob} imply 
\[
\begin{split}
	\bE(\Arr_{\cS}(\varPi_n)
	&=\bE(\Arr_{\cS}(\barPi([n]))\\
	&\le 
	\bE(\Arr_{\cS}(\barPi([n]))\Ind_{\cE})+\bE(\Arr_{\cS}(\barPi([n]))\Ind_{\cE'})\\
	&\le \bE(\Arr_{\cS}(\barPi|_{[n-4\Delta]}))+\bE(\Arr_{\cS}(\barbarPi([n-12\Delta-\ell,n])))+n\bE \Ind_{\cE'}\\
	&\le \bE(\Arr_{\cS}(\barPi|_{[n-4\Delta]}))+O\left(\Delta(1-q)^{\ell-1}\right)+3n^{-2}.
\end{split}	
\]
Thus by Theorem~\ref{Thm:ExpectedValueProcess}
\begin{align*}
	\bE(\Arr_{\cS}(\varPi_n))&=\bE(\Arr_{\cS}(\barPi|_{[n-4\Delta]}))+O\left(\Delta(1-q)^{\ell-1}+n^{-2}\right)\\
	&=
	(1+o(1)) \frac{\productO}{1-q^\ell}\frac{\prod_{i=1}^{\ell-1}(1-q^i)}{\prod_{i=1}^{\ell-1}(1-q^{\ell+i})}n	+O\left(\Delta(1-q)^{\ell-1}+n^{-2}\right)
\end{align*}
Now let us note that since $n(1-q)/\ln n \to \infty$ as $n\to \infty$, we have
\[\Delta=\Bigg\lceil \frac{\ln n}{1-q}\Bigg\rceil = o(n).\]
Moreover by Lemma~\ref{Lem:ProductO}, for $q=q(n)$ bounded away from $0$ we have
\[\productO=\Theta((1-q)^{\ell}).\]
As shown in \eqref{Eq:q:tendsto1} , for any $q=q(n)\in (0,1)$
\[\frac{\prod_{i=1}^{\ell-1}(1-q^i)}{(1-q^\ell)\prod_{i=1}^{\ell-1}(1-q^{\ell+i})}=\Theta\left(\frac{1}{1-q}\right).\]
This implies the first part of the statement of Theorem~\ref{Thm:Main:Expected}. The second one follows similarly using the second part of Theorem~\ref{Thm:ExpectedValueProcess}.

\section{Proof of Theorem~\ref{Thm:Main:Poisson}}

In the proof of Theorem~\ref{Thm:Main:Poisson} we use  Stein--Chen method \cite{Chen1975, SteinBook}.
We follow the approach of Janson \cite{Janson1994} and Barbour, Holst, and Janson  \cite{BarbourHolstJansonBook}.
The following lemma will allow us to concentrate on $\Arr_{\cS}(\barPi|_{[6\Delta+1,n-6\Delta]})$, $\barPi\sim\Mallows(\NN,q)$, instead of $\Arr_{\cS}(\barPi([n]))$, $\barPi_n\sim\Mallows(\bN,q)$. Recall that by Lemma~\ref{Lem:CoupligProcesses} we have $\barPi([n])\sim \Mallows(n,q)$. 

\begin{lemma}\label{Lem:InclusionDTV}
	Let $\ell\ge 2$ be a constant, $q=q(n)\in (0,1)$ be such that $n(1-q)\ge 1$, $\emptyset\neq\cS\subseteq \bS_\ell$,  $\barPi\sim \Mallows(\NN,q)$, and $\varPi_{12\Delta+\ell}\sim \Mallow(12\Delta+\ell,q)$. Then 
	\[
	\dtv\left(\Arr_{\cS}(\barPi|_{[6\Delta+1,n-6\Delta]}),\Arr_{\cS}(\barPi[n])\right)
	\le 5n^{-5}+2\bE(\Arr_{\cS}(\varPi_{12\Delta+\ell})).
	\]
\end{lemma}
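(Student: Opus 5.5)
We couple both variables on the same probability space carrying $\barPi\sim\Mallows(\NN,q)$ and use
\[
\dtv(X,Y)\le \Pra{X\neq Y}.
\]
Here $\Arr_{\cS}(\barPi|_{[6\Delta+1,n-6\Delta]})$ counts $\cS$--arrangements $(\cS,k_1,k_2)$ in $\barPi$ with $6\Delta\le k_1$ and $k_1+\ell\le n-6\Delta$. We sandwich the relevant segments of $\barPi$ by the events
\[
\cE_1=\{\barPi([n-12\Delta])\subseq\barPi|_{[n-6\Delta]}\subseq \barPi([n])\}
\quad\text{and}\quad
\cE_2=\{\barPi|_{[6\Delta]}\subseq\barPi([12\Delta])\}.
\]
By Lemma~\ref{Lem:Inclusion0} (with $x=6$, $t=n$ and $t=12\Delta$) and Lemma~\ref{Lem:Inclusion2} (with $x=6$, $t=n-12\Delta$), each inclusion fails with probability at most $2n^{-5}$ or $n^{-5}$. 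Hence $\Pra{\cE_1'\cup\cE_2'}\le 5n^{-5}$, which is the first term of the bound.

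On $\cE_1$, every arrangement in $\barPi|_{[n-6\Delta]}$ is an arrangement in $\barPi([n])$, as in (i) of Lemma~\ref{Lem:Inclusion:Implication}. Conversely, every arrangement of $\barPi([n])$ with $k_2\le n-12\Delta-\ell$ lies in $\barPi|_{[n-6\Delta]}$, as in (ii). So on $\cE_1\cap\cE_2$ the two counts can differ only through two kinds of arrangements.

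\textbf{Type A} (left end): arrangements in $\barPi|_{[n-6\Delta]}$ with $k_1<6\Delta$. On $\cE_2$ their values lie in $[12\Delta]$. They are therefore $\cS$--arrangements of $\barPi([12\Delta])$ with $k_2\le 6\Delta$, which requires a short argument that the order among these values is preserved.

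\textbf{Type B} (right end): arrangements in $\barPi([n])$ with $k_2>n-12\Delta-\ell$. By (iii) of Lemma~\ref{Lem:Inclusion:Implication} these correspond to $\cS$--arrangements in $\barbarPi([n-12\Delta-\ell+1,n])$.

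Thus
\[
\Pra{X\ne Y}\le 5n^{-5}+\Pra{\Arr_{\cS}(\barPi([12\Delta]))\ge1}+\Pra{\Arr_{\cS}(\barbarPi([n-12\Delta-\ell+1,n]))\ge 1}.
\]
By Lemma~\ref{Lem:CoupligProcesses}, the last variable is distributed as $\Arr_{\cS}(\varPi_{12\Delta+\ell})$. By Markov's inequality its probability of being positive is at most $\bE\Arr_{\cS}(\varPi_{12\Delta+\ell})$.

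For the Type A term, $\barPi([12\Delta])\sim\Mallows(12\Delta,q)$. Its expected count is at most $\bE\Arr_{\cS}(\varPi_{12\Delta+\ell})$, because the arrangements of $\barPi([12\Delta])$ are among those of $\barPi([12\Delta+\ell])$ by the same inclusion argument, or more directly since $\barPi([12\Delta])\subseq\barPi([12\Delta+\ell])$ preserves consecutive-value blocks whose positions remain consecutive. The positions may fail to stay consecutive, however, and this is the step I expect to be the main obstacle. To handle it, I would restrict to arrangements whose positions also lie in $[6\Delta]$ on $\cE_2$, where $\barPi|_{[6\Delta]}$ is a genuine prefix, so that consecutiveness is automatic. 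Alternatively, I would bound $\Pra{\text{Type A}}$ directly by $\sum_{k_1<6\Delta}\bB_{k_1}$ using Lemma~\ref{Lem:QkLEQ0}, and compare this sum with $\bE\Arr_{\cS}(\varPi_{12\Delta+\ell})$ via Theorem~\ref{Thm:ExpectedValueProcess}, which gives $\Theta(\Delta\,\bB_0)$. Adding the two endpoint contributions gives the factor $2$, and the lemma follows.
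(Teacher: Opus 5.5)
Your overall scheme matches the paper's. You couple both counts on the space of $\barPi$, bound $\dtv$ by the probability that they differ, and pay $5n^{-5}$ for inclusion events from Lemmas~\ref{Lem:Inclusion0} and~\ref{Lem:Inclusion2} with $x=6$. The right end, handled via (ii)--(iii) of Lemma~\ref{Lem:Inclusion:Implication}, Lemma~\ref{Lem:CoupligProcesses} and Markov's inequality, is correct.

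\textbf{The gap is at the left end, and it comes from your choice $\cE_2=\{\barPi|_{[6\Delta]}\subseq\barPi([12\Delta])\}$.}
\begin{itemize}
\item A Type A arrangement has $k_1\le 6\Delta-1$, so it occupies positions up to $6\Delta+\ell-1$. When $k_1\ge 6\Delta-\ell+1$ it sticks out of the prefix controlled by $\cE_2$, and its values need not lie in $[12\Delta]$. For example, take $\ell=2$ and $\mathrm{id}\in\cS$. The configuration $\barPi_{6\Delta}=12\Delta$, $\barPi_{6\Delta+1}=12\Delta+1$ is compatible with $\cE_2$, but it is not an arrangement of $\barPi([12\Delta])$.
\item So the claim that on $\cE_2$ these values lie in $[12\Delta]$ is false. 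Restricting to arrangements inside $[6\Delta]$, as you suggest, drops these straddling arrangements without bounding them.
\item For the non-straddling ones you would still need $\bE\Arr_{\cS}(\varPi_{12\Delta})\le\bE\Arr_{\cS}(\varPi_{12\Delta+\ell})$, which you do not prove. It is not automatic: positions consecutive in $\barPi([12\Delta])$ can be separated in $\barPi([12\Delta+\ell])$ by entries with values in $[12\Delta+1,12\Delta+\ell]$. There is also no general monotonicity in $t$ to appeal to: as $q\to1$ with $\cS=\bS_3$, the expectation tends to $6(t-2)/(t(t-1))$, which decreases for $t\ge4$.
\item Your fallback does not close the gap. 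The bound $\sum_{k_1<6\Delta}\bB_{k_1}\le 6\Delta\,\bB_0$ is valid. However, Theorem~\ref{Thm:ExpectedValueProcess} requires $(1-q)t/\ln n\to\infty$, which fails for $t=12\Delta+\ell$ (the ratio tends to $12$). It is also about $\barPi|_{[t]}$, not $\varPi_t$. At best it gives a comparison up to constants, not the inequality with the explicit constant $2$ that the lemma asserts. Such an $O(\Delta\,\bB_0)$ bound would still suffice for Theorem~\ref{Thm:Main:Poisson}, but it is not the stated result.
\end{itemize}

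\textbf{The repair is the paper's choice: shift both parameters by $\ell$ and use $\{\barPi|_{[6\Delta+\ell]}\subseq\barPi([12\Delta+\ell])\}$.}
\begin{itemize}
\item By Lemma~\ref{Lem:Inclusion0} with $t=12\Delta+\ell$ and $x=6$, this event costs the same $2n^{-5}$.
\item Every Type A arrangement lies in positions $[1,6\Delta+\ell-1]$, so it is an arrangement of $\barPi|_{[6\Delta+\ell]}$.
\item On this event the first $6\Delta+\ell$ terms of $\barPi$ and of $\barPi([12\Delta+\ell])$ coincide. By (i) of Lemma~\ref{Lem:Inclusion:Implication}, each such arrangement is an arrangement of $\barPi([12\Delta+\ell])\sim\Mallows(12\Delta+\ell,q)$ with identical positions and values.
\item No order-preservation or monotonicity argument is needed. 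Markov's inequality then gives the second copy of $\bE\Arr_{\cS}(\varPi_{12\Delta+\ell})$ directly.
\end{itemize}
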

The proof of the lemma is postponed to a later part of the section.
Now concentrate on
$\Arr_{\cS}(\barPi|_{[6\Delta+1,n-6\Delta]})$. 
Let $\Gamma=[6\Delta,n-6\Delta-\ell]$. Recall that $\cB[k]$ is the event that $(\barPi_{k+1},\ldots,\barPi_{k+\ell})$ is an $\cS$--arrangement in $\barPi\sim\Mallows(\NN,q)$ and for each $k\in \Gamma$
\begin{equation}\label{Eq:Def:Indk}
	\Ind_k=\Ind_{\cB[k]},\quad \bB_k=\Pra{\Ind_k=1}=\bE \Ind_k. 
\end{equation}
are the indicator random variable of $\cB[k]$ and its expected value.  Moreover we define
\[\bW=\bW(n)=\Arr_\cS(\barPi|_{[6\Delta+1,n-6\Delta]})=\sum_{k\in\Gamma}\Ind_k\]
and
\[\lambda = \lambda(n) = \bE \bW = \sum_{k\in \Gamma} \bB_{k},\]
Note that
\begin{equation}\label{Eq:ell:Delta}
	\ell - \ln_q n = \ell + \frac{\ln n}{-\ln q} \le \ell + \frac{\ln n}{1-q} \le 6\Delta.
\end{equation}
Therefore by Theorem~\ref{Thm:Qk:value}, since $1-q=o(1)$,
\begin{equation}\label{Eq:Lambda}
	\lambda=(1+o(1))\frac{|\cS|[(\ell-1)!]^2}{(2\ell-1)!}n(1-q)^{\ell-1}=\Theta(n(1-q)^{\ell-1}).
\end{equation}
For each $k\in \Gamma$, we split the index set $\Gamma=\{k\}\cup\Gamma_{k,1}\cup\Gamma_{k,2}$, where $\Gamma_{k,1}$ contains all the indices in $\Gamma\setminus\{k\}$ that are at distance at most $12\Delta+\ell$ from $k$ and $\Gamma_{k,2}$ consists of all the remaining indices of $\Gamma\setminus\{k\}$. That is

\begin{align}
	\Gamma_{k,1}&=[\max\{k-12\Delta-\ell ,6\Delta\},\min\{k+12\Delta+\ell,n-6\Delta\}\}]\setminus \{k\}\nonumber \\
	\Gamma_{k,2}&=\Gamma\setminus [\max\{k-12\Delta-\ell,6\Delta\},\min\{k+12\Delta+\ell,n-6\Delta\}\}].\label{Eq:Def:Gammak}
\end{align}
Note that for each $k\in \Gamma$
\begin{equation*}
	|\Gamma_{k,1}|\le  24\Delta + 2\ell.
\end{equation*}
Now, given $k\in \Gamma$, we split $\bW=\Ind_k+\bW_1+\bW_2$, where
\begin{equation}\label{Eq:Def:W12}
	\bW_i=\sum_{j\in \Gamma_i}\Ind_j,\quad i\in \{1,2\}. 
\end{equation}
Now let $\bW_{k,1}$ and $\bW_{k,2}$ to be any random variables with the joint conditional distributions as $\bW_{1}$ and $\bW_{2}$, respectively, under condition $\Ind_{k}=1$, i.e.
\begin{equation}\label{Eq:Def:Wk12}
	{\cal L}(\bW_{k,1},\bW_{k,2})={\cal L}(\bW_{1},\bW_{2}|\Ind_{k}=1).
\end{equation}
Moreover let $\bW_k=\bW_{k,1}+\bW_{k,2}$.
Using Theorem~1 \cite{Janson1994} (see also \cite{BarbourHolstJansonBook}) and some further reasoning from \cite{Janson1994,BarbourHolstJansonBook}  we obtain 
\begin{align}\nonumber
	\dtv&\left(\Arr_{\cS}(\barPi|_{[6\Delta+1,n-6\Delta]}),\operatorname{Po}(\lambda)\right)=\\
	&=\dtv(\bW,\operatorname{Po}(\lambda))\\ \nonumber
	&\le (1\wedge \lambda^{-1})\sum_{k\in \Gamma}\bB_k\bE|\bW-\bW_k|\\
	&\le (1\wedge \lambda^{-1})
	\left(
	\sum_{k\in \Gamma}\bB_k^2
	+\sum_{k\in \Gamma}\bB_k\bE|\bW_1-\bW_{k,1}|
	+\sum_{k\in \Gamma}\bB_k\bE|\bW_2-\bW_{k,2}|
	\right)\label{Eq:Poisson:Dtv:Suma}.
\end{align}
Theorem~\ref{Thm:Main:Poisson} will follow by Lemma~\ref{Lem:InclusionDTV}, \eqref{Eq:Poisson:Dtv:Suma}, and the following lemma. 
\begin{lemma}\label{Lem:dtv:Coupling}
	Let $\Ind_k$ and $\bB_k$ be as in \eqref{Eq:Def:Indk} and $\bW_1$, $\bW_2$, $\bW_{k,1}$, $\bW_{k,2}$ be as in \eqref{Eq:Def:W12}  and  \eqref{Eq:Def:Wk12}, $1-q=o(1)$, and $ n^{-5/(\ell-1)}=o(1-q)$. Then
	\begin{align}
		\label{Eq:dtv:Coupling:1}	\sum_{k\in \Gamma}\bB_k^2&=O(n(1-q)^{2\ell-2});\\
		\label{Eq:dtv:Coupling:2}	\sum_{k\in \Gamma}\bB_k\bE|\bW_1-\bW_{k,1}|&=O\left(n\ln n(1-q)^{2\ell-3}+n(1-q)^{\ell}\right);\\
		\intertext{ and there exists a coupling of $\bW_2$ and $\bW_{k,2}$ for which}
		\label{Eq:dtv:Coupling:3}	\sum_{k\in \Gamma}\bB_k\bE|\bW_2-\bW_{k,2}|&=O(n^{-3}(1-q)^{-2(\ell-1)}).
	\end{align}	
\end{lemma}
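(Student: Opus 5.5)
The plan is to prove the three bounds of Lemma~\ref{Lem:dtv:Coupling} separately. Each one reduces to the estimates on $\bB_k$ already available: Lemma~\ref{Lem:QkLEQ0} and Theorem~\ref{Thm:Qk:value}. The third bound also uses the regeneration structure of the Mallows infinite sequence, built from the independent geometric variables $Z_i$.

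For \eqref{Eq:dtv:Coupling:1}, Lemma~\ref{Lem:QkLEQ0} gives $\bB_k\le (\ell-1)!(1-q)^{\ell-1}$ for all $k$. Since $|\Gamma|\le n$, summing yields $O(n(1-q)^{2\ell-2})$ immediately.

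For \eqref{Eq:dtv:Coupling:2}, I bound the difference crudely:
\[
\bE|\bW_1-\bW_{k,1}|\le \bE\bW_1+\bE\bW_{k,1}=\sum_{j\in\Gamma_{k,1}}\bB_j+\sum_{j\in\Gamma_{k,1}}\Pra{\cB[j]\mid\cB[k]}.
\]
The first sum is at most $(24\Delta+2\ell)(\ell-1)!(1-q)^{\ell-1}=O(\ln n\,(1-q)^{\ell-2})$. In the second sum I split according to the index distance.

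\emph{Far indices}, $|j-k|\ge\ell$: the blocks are disjoint. Lemma~\ref{Lem:QkLEQ0} (valid for $\ell\ge3$) gives $\Pra{\cB[j]\cap\cB[k]}\le ((\ell-1)!(1-q)^{\ell-1})^2$. Multiplying by $|\Gamma_{k,1}|=O(\Delta)$ and summing over $k$ contributes $O(n\Delta(1-q)^{2\ell-2})=O(n\ln n(1-q)^{2\ell-3})$.

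\emph{Near indices}, $0<|j-k|<\ell$: there are $O(1)$ of them. For each, $\cB[j]\cap\cB[k]$ forces the union block of length $\ell+|j-k|$ to consist of values forming a set of consecutive integers in one of finitely many orders. I redo the computation behind \eqref{Eq:Q0} and Lemma~\ref{Lem:QkLEQ0} for the pattern class of length $\ell+d$. This gives a probability $O((1-q)^{\ell+d-1})\le O((1-q)^{\ell})$, and summing over $k$ contributes $O(n(1-q)^\ell)$. The only subtlety is the monotonicity argument $\bB_k\le\bB_0$ for the merged pattern family, but it holds verbatim for any pattern family in $\bS_{\ell+d}$. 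Multiplying these contributions by $\bB_k$ and summing gives exactly the two terms in \eqref{Eq:dtv:Coupling:2}.

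For \eqref{Eq:dtv:Coupling:3}, I construct the coupling, which I expect to be the main obstacle. Condition on $\cB[k]$ and resample only the variables $Z_{k+1},\dots,Z_{k+\ell}$, together with the effect they have on later positions. The idea is a "gap" regeneration: by Lemmas~\ref{Lem:Inclusion0} and~\ref{Lem:Inclusion2} (with $x$ around $5$ or $6$ and window $6\Delta$), with probability $1-O(n^{-5})$ the following two things hold.
\begin{itemize}
\item The values occupied by the terms with indices $\le k-6\Delta$ form an initial segment up to fluctuation $\Delta$.
\item The terms with indices $\ge k+6\Delta$ are placed beyond all values touched near $k$.
\end{itemize}
On this event, the relative order (and hence the arrangement indicators) of the blocks indexed by $\Gamma_{k,2}$ is a function only of the $Z_i$ with $|i-k|>12\Delta$ and of the \emph{relative} structure, which conditioning on $\cB[k]$ does not change. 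Concretely, I build $\barPi$ and $\barPi'$, where $\barPi'$ has the law conditioned on $\cB[k]$, from the same $Z_i$ outside $[k+1,k+\ell]$; this uses the structure of Lemma~\ref{Lem:CoupligProcesses}. Then $\bW_2=\bW_{k,2}$ except on an event of probability $O(n^{-5})/\bB_k$. On that bad event I bound $|\bW_2-\bW_{k,2}|\le n$. Summing $\bB_k\cdot n\cdot O(n^{-5})/\bB_k$ over $k$ gives $O(n^{-3})$, and the hypothesis $n^{-5/(\ell-1)}=o(1-q)$ converts this to the stated $O(n^{-3}(1-q)^{-2(\ell-1)})$ with room to spare. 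The delicate point is checking that conditioning on $\cB[k]$ changes the geometric variables near $k$ only through an event on which indices in $\Gamma_{k,2}$ are unaffected. I would verify this through the description "$\cB[k]$ occurs iff the values of positions $k+1..k+\ell$ among the not-yet-placed spots have a prescribed pattern", which depends only on $Z_{k+1},\dots,Z_{k+\ell}$ relative to earlier placements.
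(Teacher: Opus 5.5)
Your proofs of \eqref{Eq:dtv:Coupling:1} and \eqref{Eq:dtv:Coupling:2} follow the paper. You use Lemma~\ref{Lem:QkLEQ0} for $\bB_k$ and for disjoint blocks, and Lemma~\ref{Lem:QkLEQ0} again for the merged pattern family in $\bS_{\ell+i}$ when the blocks overlap. There is a genuine gap in \eqref{Eq:dtv:Coupling:3}.

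\textbf{The problem: what $\cB[k]$ depends on.} The index $k$ refers to the \emph{spots} $k+1,\dots,k+\ell$ of $\barPi$. The event $\cB[k]$ says that these spots end up holding the numbers $j+1,\dots,j+\ell$ for some random $j$. Which numbers land there is decided by the whole placement history; the recursion \eqref{Eq:Qk:recurrence}, which waits for the first $Z_i\le k+\ell$, makes this explicit. On the good event $\cE_k$ of \eqref{Eq:Ek:Def2}, $\Ind_k$ agrees with a function of the entire block $Z_{k-6\Delta+1},\dots,Z_{k+6\Delta+\ell}$. It is not a function of $Z_{k+1},\dots,Z_{k+\ell}$ given the earlier placements. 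Two consequences follow.
\begin{enumerate}
\item Conditioning on $\cB[k]$ changes the joint law of the $Z_i$ outside $[k+1,k+\ell]$. So keeping those fixed and resampling $Z_{k+1},\dots,Z_{k+\ell}$ cannot produce $\mathcal{L}(\barPi\mid\cB[k])$. Concretely, suppose $Z_1,\dots,Z_k$ have put numbers $m,m'\le k$ with $|m-m'|\ge \ell$ into spots $k+1$ and $k+2$. Then no choice of the resampled variables makes $\cB[k]$ occur.
\item $\Ind_j$ for $j\in\Gamma_{k,2}$ is not literally a function of distant $Z_i$; it agrees with one only on a good event.
\end{enumerate}
Hence your claim that $\bW_2=\bW_{k,2}$ off an event of probability $O(n^{-5})/\bB_k$ has no actual coupling behind it.

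\textbf{The missing ingredient: the paper's proxy construction.}
\begin{itemize}
\item By Lemma~\ref{Lem:CoupligProcesses}, the subsequences $\barbarPi([k-6\Delta])$, $\barbarPi([k-6\Delta+1,k+6\Delta+\ell])$ and $\barbarPi([k+6\Delta+\ell+1,n])$ are exactly independent. Each is a function of its own block of the $Z_i$.
\item Define indicators $\Ind'_j$ by reading arrangements off the two outer subsequences for $j\in\Gamma_{k,2}$, and off the middle one for $j=k$. Then $\bW'_{k,2}=\sum_{j\in\Gamma_{k,2}}\Ind'_j$ is exactly independent of $\Ind'_k$.
\item The event $\cE_k$ has probability at least $1-8n^{-5}$ by Lemmas~\ref{Lem:Inclusion0} and~\ref{Lem:Inclusion2}. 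On it, $\Ind_j=\Ind'_j$ for all $j\in\Gamma_{k,2}\cup\{k\}$.
\item Since $\Ind_k\neq\Ind'_k$ is possible off $\cE_k$, you must still compare conditioning on $\Ind_k=1$ with conditioning on $\Ind'_k=1$. The paper does this with the bound $|(a+x)/(b+y)-a/b|$, where $|x|,|y|\le\Pra{\cE_k'}$ and $b\ge C(1-q)^{\ell-1}$ by Theorem~\ref{Thm:Qk:value}.
\end{itemize}

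Your direct-coupling idea does become correct once it is rebuilt on this structure. Share the $Z_i$ outside the middle block between the two copies. Their law given $\Ind_k=1$ is within total variation distance $2\Pra{\cE_k'}/\bB_k$ of their unconditional law, because they are independent of $\Ind'_k$. Coupling these two laws maximally, and using $\Ind=\Ind'$ on $\cE_k$ in both copies, gives the $O(n^{-3})$ you were aiming for.
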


\medskip

Now we present the proofs of Lemma~\ref{Lem:InclusionDTV}, Lemma~\ref{Lem:dtv:Coupling}, and Theorem~\ref{Thm:Main:Poisson}.

\begin{proof}[Proof of Lemma~\ref{Lem:InclusionDTV}] 
	We use the following fact, see for example Fact 4 in \cite{FillSchSC2000}.
	
	\begin{fact}\label{Fact:dtv}
		Let $X$ and $X'$ be two random variables. If there exists a probability space on which
		random variables $Y$ and $Y'$ are both defined and have probability distribution as $X$ and $X'$,
		respectively, then
		\[\dtv(X,X')\le \Pra{Y\neq Y'}.
		\]
	\end{fact}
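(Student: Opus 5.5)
The plan is the standard coupling inequality argument: reduce the total variation distance to a comparison of probabilities on the common probability space, then bound the difference by the probability of disagreement. Since $X,X'$ take values in $\NN_0$ here, I use the definition $\dtv(X,X')=\max_{A\subseteq\NN_0}|\Pra{X\in A}-\Pra{X'\in A}|$ stated before Theorem~\ref{Thm:Main:Poisson}; the same argument works verbatim for any measurable $A$.

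First, fix an arbitrary set $A\subseteq\NN_0$. Because $Y$ has the distribution of $X$ and $Y'$ that of $X'$, we have $\Pra{X\in A}=\Pra{Y\in A}$ and $\Pra{X'\in A}=\Pra{Y'\in A}$. Hence the difference $\Pra{X\in A}-\Pra{X'\in A}$ can be computed entirely on the common space where $Y$ and $Y'$ live.

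Second, on that space decompose
\[
\Pra{Y\in A}=\Pra{Y\in A,\,Y'\in A}+\Pra{Y\in A,\,Y'\notin A},
\]
and similarly for $\Pra{Y'\in A}$. Subtracting, the common term cancels and
\[
\Pra{Y\in A}-\Pra{Y'\in A}=\Pra{Y\in A,\,Y'\notin A}-\Pra{Y'\in A,\,Y\notin A}\le \Pra{Y\in A,\,Y'\notin A}.
\]
The event $\{Y\in A,\,Y'\notin A\}$ is contained in $\{Y\neq Y'\}$, so the difference is at most $\Pra{Y\neq Y'}$. By the symmetric argument with the roles of $Y$ and $Y'$ exchanged, $\Pra{Y'\in A}-\Pra{Y\in A}\le\Pra{Y\neq Y'}$ as well. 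Therefore $|\Pra{X\in A}-\Pra{X'\in A}|\le\Pra{Y\neq Y'}$.

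Finally, take the maximum over $A$ to obtain $\dtv(X,X')\le\Pra{Y\neq Y'}$. There is no genuine obstacle here. The only point needing care is that the bound is uniform in $A$ because the disagreement event $\{Y\neq Y'\}$ does not depend on $A$.
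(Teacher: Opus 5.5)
Your proof is correct: the cancellation $\Pra{Y\in A}-\Pra{Y'\in A}=\Pra{Y\in A,\,Y'\notin A}-\Pra{Y'\in A,\,Y\notin A}$, together with a bound uniform in $A$, is exactly the standard coupling inequality. The paper does not prove this fact itself but quotes it as Fact 4 of a cited reference, so your argument supplies the routine proof that the citation stands in for.
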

	We will use this fact for the coupling $(\Arr_{\cS}(\barPi|_{[6\Delta+1,n-6\Delta]}),\Arr_{\cS}(\barPi([n]))$ on the probability space of $\barPi\sim \Mallows(\NN,q)$.
	Define event
	\begin{equation*}
		{\cal F}=\{\barPi|_{[6\Delta+\ell]}\subseq\barPi([12\Delta+\ell])\subseq\barPi([n-12\Delta])\subseq \barPi|_{[n-6\Delta]}\subseq \barPi([n])\}.
	\end{equation*}
	Lemma~\ref{Lem:Inclusion0} and Lemma~\ref{Lem:Inclusion2}, with $x=6$, imply
	\begin{equation*}
		\Pra{{\cal F}'}\le 5n^{-5}.
	\end{equation*}
	Note that, 
	\begin{align*}
		\Arr_{\cS}(\barPi|_{[6\Delta+1,n-6\Delta]}) &\le \Arr_{\cS}(\barPi|_{[n-6\Delta]})\\
		\Arr_{\cS}(\barPi|_{[n-6\Delta]})&\le \Arr_{\cS}(\barPi|_{[6\Delta+\ell]}) + \Arr_{\cS}(\barPi|_{[6\Delta+1,n-6\Delta]})
	\end{align*}
	Moreover by Lemma~\ref{Lem:Inclusion:Implication}
	\begin{align*}
		\barPi|_{[n-6\Delta]}&\subseq \barPi([n])\Rightarrow\\
		&\quad\quad\Rightarrow \Arr_{\cS}(\barPi|_{[n-6\Delta]})\le \Arr_{\cS}(\barPi([n]))\\ 
		\barPi([n-12\Delta])&\subseq \barPi|_{[n-6\Delta]}\subseq \barPi([n])\Rightarrow\\
		&\quad\quad\Rightarrow\Arr_{\cS}(\barPi([n]))\le \Arr_{\cS}(\barPi|_{[n-6\Delta]})+\Arr_{\cS}(\barbarPi([n-12\Delta-\ell+1,n]) \\ 
		\barPi|_{[6\Delta+\ell]}&\subseq\barPi([12\Delta+\ell])\Rightarrow\\
		&\quad\quad\Rightarrow \Arr_{\cS}(\barPi|_{[6\Delta+\ell]})\le \Arr_{\cS}(\barPi([12\Delta+\ell]) 
	\end{align*}
	Combining the above inequalities, we obtain that if ${\cal F}$ occurs we have
	\begin{align}\nonumber
		\Arr_{\cS}(\barPi|_{[6\Delta+1,n-6\Delta]})\le\phantom{ \Arr_{\cS}(\barPi([n]))}&\\ 
		\le  \Arr_{\cS}(\barPi|_{[n-6\Delta]})
		\nonumber
		\le \Arr_{\cS}(\barPi([n]))
		&\le \Arr_{\cS}(\barPi|_{[n-6\Delta]})\\
		\nonumber &\quad\quad +\Arr_{\cS}(\barbarPi([n-12\Delta-\ell+1,n])
		\\ \nonumber
		&\le \Arr_{\cS}(\barPi|_{[6\Delta+\ell]}) + \Arr_{\cS}(\barPi|_{[6\Delta+1,n-6\Delta]})
		\\ \nonumber &\quad\quad + \Arr_{\cS}(\barbarPi([n-12\Delta-\ell+1,n])
		\\ \nonumber
		&\le 
		\Arr_{\cS}(\barbarPi([12\Delta+\ell])+\Arr_{\cS}(\barPi|_{[6\Delta+1,n-6\Delta]})\\
		&\quad\quad+\Arr_{\cS}(\barbarPi([n-12\Delta-\ell+1,n]).
		\label{Eq:InclusionDtv:inequalities}
	\end{align}
	Thus, on the probability space of $\barPi\sim \Mallows(\NN,q)$, ${\cal F}$ implies 
	\begin{multline}
		0
		\le 
		\Arr_{\cS}(\barPi([n]))-\Arr_{\cS}(\barPi|_{[6\Delta+1,n-6\Delta]})\\
		\le
		\Arr_{\cS}(\barbarPi([12\Delta+\ell])+\Arr_{\cS}(\barbarPi([n-12\Delta-\ell+1,n]).
	\end{multline}
	Therefore using Fact~\ref{Fact:dtv}, Lemma~\ref{Lem:CoupligProcesses}, and Markov's inequality we obtain
	
	\[
	\begin{split}
		\dtv&\left(\Arr_{\cS}(\barPi|_{[6\Delta+1,n-6\Delta]}),\Arr_{\cS}(\barPi([n]))\right)
		\le 
		\Pra{\Arr_{\cS}(\barPi|_{[6\Delta+1,n-6\Delta]})\neq\Arr_{\cS}(\barPi([n]))}\\
		&\le 
		\Pra{{\cal F}'}+\Pra{\Arr_{\cS}(\barbarPi([12\Delta+\ell])\ge 1}+\Pra{\Arr_{\cS}(\barbarPi([n-12\Delta-\ell+1,n])\ge 1}\\
		&\le 5n^{-3}+2\bE(\Arr_{\cS}(\barpi_{12\Delta+\ell})).
	\end{split}
	\]
\end{proof}

\begin{proof}[Proof of Lemma~\ref{Lem:dtv:Coupling}] We prove each inequality separately. Recall that by \eqref{Eq:ell:Delta},  $\ell - \ln_q n < 6\Delta\le k$ for any $k\in \Gamma$.  
	\\
	\noindent{\it Proof of \eqref{Eq:dtv:Coupling:1}. }
	By Lemma~\ref{Lem:QkLEQ0} we have
	\begin{equation*}
		\sum_{k\in \Gamma}\bB_k^2\le n [(\ell-1)!(1-q)^{\ell-1}]^2 = O(n(1-q)^{2\ell-2}).
	\end{equation*}

	\noindent{\it Proof of \eqref{Eq:dtv:Coupling:2}. }
	Reasoning as in \cite{Janson1994,BarbourHolstJansonBook},  we get for the second sum
	\begin{align}\nonumber
		\sum_{k\in \Gamma}\bB_k\bE|\bW_1-\bW_{k,1}|
		&\le \sum_{k\in \Gamma}\sum_{j\in \Gamma_{1,k}}\bB_k\bB_j+\bE\Ind_k\Ind_j\\
		&\le \sum_{k\in \Gamma}\sum_{j\in \Gamma_{1,k}}\bB_k\bB_j
		+\sum_{k\in \Gamma}\sum_{\substack{j\in \Gamma_{1,k}\\ |k-j|\ge \ell}}\bE\Ind_k\Ind_j
		+\sum_{k\in \Gamma}\sum_{\substack{j\in \Gamma_{1,k}\\ 1\le |k-j| \le  \ell-1}}\bE\Ind_k\Ind_j\label{Eq:Poisson:Gamma1}.
	\end{align}
	It follows from the definition that $\bE\Ind_k\Ind_j=\Pra{\cB[k]\cap \cB[j]}$. Therefore
	the first two sums in \eqref{Eq:Poisson:Gamma1} may by bounded  greedily using Lemma~\ref{Lem:QkLEQ0} to obtain
	\[
	\sum_{k\in \Gamma}\sum_{j\in \Gamma_{1,k}}\bB_k\bB_j
	+\sum_{k\in \Gamma}\sum_{\substack{j\in \Gamma_{1,k}\\ |k-j|\ge \ell}}\bE\Ind_k\Ind_j
	\le 2n(24\Delta+2\ell)[(\ell-1)!(1-q)^{\ell-1}]^2 = O\left(n\ln n(1-q)^{2\ell-3}\right).
	\]
	Now we focus on the last sum in \eqref{Eq:Poisson:Gamma1}. Let us assume that $k<j$ (for $k>j$ the reasoning is analogous) and  $i=|k-j|$, i.e. $j=k+i$. Recall that $\bE\Ind_k\Ind_{k+i}=\Pra{\cB[k]\cap \cB[k+i]}$. Denote by $\cS_i$ the family of all permutations from $\bS_{\ell+i}$ such that every $\cS_i$--arrangement in $\barPi$ consists of two $\cS$--arrangements intersecting on $k-i$ numbers in the middle part of the $\cS_i$--arrangement. Here we allow $\cS_i=\emptyset$, but in this case all calculations are valid trivially. Then $\cB[k]\cap \cB[k+i]$ is the event that there exists $s\ge 0$ such that there is an $(\cS_i,k,s)$--arrangement in $\barPi$. Therefore, again making use of Lemma~\ref{Lem:QkLEQ0}, but now for $\cS_i$--arrangements   
	\[
	\begin{split}
		\sum_{k\in \Gamma}\sum_{\substack{j\in \Gamma_{1,k}\\ 1\le |k-j| \le  \ell-1}}\bE\Ind_k\Ind_j
		&=2\sum_{k\in \Gamma}\sum_{\substack{1\le i\le \ell-1\\ k+i\in \Gamma}}\Pra{\cB[k]\cap \cB[k+i]}\\
		&\le 2 n \sum_{i=1}^{\ell-1}(\ell+i)!(1-q)^{\ell+i-1}\\
		&=O(n(1-q)^{\ell}).
	\end{split}
	\]

	\medskip
	
	\noindent{\it Proof of \eqref{Eq:dtv:Coupling:3}.} 
	Set $k\in \Gamma$. Let us first consider the case 
	$18\Delta+\ell\le k\le n-18\Delta-\ell$. In this case $\Gamma_{k,1}=[k-12\Delta-\ell,k+12\Delta+\ell]$ as defined in \eqref{Eq:Def:Gammak}. 
	Define event
	\begin{multline}\label{Eq:Ek:Def2}
		\cE_k=
		\{\barPi|_{[k-12\Delta]}\stackrel{(i1)}{\subseq} \barPi([k-6\Delta])\stackrel{(i2)}{\subseq} \barPi|_{[k]}\subseq\\
		\subseq \barPi|_{[k+\ell]}\stackrel{(i3)}{\subseq} \barPi([k+6\Delta+\ell])\stackrel{(i4)}{\subseq} \barPi|_{[k+12\Delta+\ell]}\subseq \barPi|_{[n-6\Delta]}\stackrel{(i5)}{\subseq} \barPi([n]).
		\}
	\end{multline}
	By Lemma~\ref{Lem:Inclusion0} and Lemma~\ref{Lem:Inclusion2}
	\begin{equation}
		\Pra{\cE_k}\ge 1-8n^{-5}.
	\end{equation}
	Let us consider three permutations
	\begin{align*}
		\barpi_{(k^-)}
		&=(\barpi_{(k^-),1},\ldots,\barpi_{(k^-),k-6\Delta})=
		\barbarPi([k-6\Delta])\\
		\barpi_{(k)}
		&=(\barpi_{(k),1},\ldots,\barpi_{(k),12\Delta+\ell})=
		\barbarPi([k-6\Delta+1,k+6\Delta+\ell])\\
		\barpi_{(k^+)}
		&=(\barpi_{(k^+),1},\ldots,\barpi_{(k^+),n-k-6\Delta-\ell})=
		\barbarPi([k+12\Delta+\ell+1,n])
	\end{align*}
	By Lemma~\ref{Lem:CoupligProcesses} $\barpi_{(k^-)}$, $\barpi_{(k)}$, and $\barpi_{(k^+)}$ are independent and have $\Mallows$ distributions.
	
	Let $j\in \NN_0$. Recall that
	$\Ind_j$ is the indicator random variable of the event that $(\barPi_{j+1},\ldots,\barPi_{j+\ell})$ is an $\cS$--arrangement.  
	For $j\in \Gamma_{k,2}\cup \{k\}$ define $\Ind_j'$ to be the indicator random variable of the event that the sequence
	\begin{align*}
		\nonumber	&(\barpi_{(k^-),j+1},\dots,\barpi_{(k^-),j+\ell}), &&\text{ for } j\in [6\Delta, k-12\Delta-\ell];\\	&(\barpi_{(k),j+1-(k-6\Delta)},\dots,\barpi_{(k),j+\ell-(k-6\Delta)}), &&\text{ for } j=k;\\ 
		\nonumber	&(\barpi_{(k^+),j+1-(k+6\Delta+\ell)},\dots,\barpi_{(k^+),j+\ell-(k+6\Delta+\ell)}), &&\text{ for } j\in [k+6\Delta+\ell+1,n-6\Delta-\ell];
	\end{align*}  
	forms an $\cS$--arrangement in $\barpi_{(k^-)}$, $\barpi_{(k)}$, and $\barpi_{(k^+)}$, respectively.
	
	On the other hand, if $\cE_k$ occurs then inclusion (i1) implies
	\begin{align}
		\label{Eq:Coupl:varPi:kminus}
		\barpi_{(k^-),i}&=\barPi_i,&&\text{ for all }i\in [6\Delta+1,k-12\Delta], \\
		\intertext{and (i2) and (i3) imply}
		\label{Eq:Coupl:varPi:k}
		\barpi_{(k),i-(k-6\Delta)}&=\barPi_{i},&&\text{ for all }i\in [k+1,k+\ell].\\ 
		\intertext{Moreover (i4) and (i5) imply}
		\label{Eq:Coupl:varPi:kplus}
		\barpi_{(k^+),i-(k+6\Delta+\ell)}&=\barPi_{i},&&\text{ for all }i\in [k+\ell+12\Delta+1,n-6\Delta-\ell]. 
	\end{align}
	Thus by \eqref{Eq:Coupl:varPi:kminus}, \eqref{Eq:Coupl:varPi:k} , and \eqref{Eq:Coupl:varPi:kplus}, we have that 
	\begin{equation}
		\cE_k \subseteq \{\forall_{j\in \Gamma_{k,2}\cup\{k\}}\, \Ind_j=\Ind_j'\}.
	\end{equation}
	An analogous reasoning applies to $k\in \Gamma\setminus [18\Delta+\ell, n-18\Delta-\ell]$, however with $\cE_k$ redefined, without (i1) and (i2) for $k<18\Delta+\ell$ and without (i3) and (i4) for $k>n-18\Delta-\ell$. 
	
	Overall we have in all cases 
	\begin{equation}\label{Eq:Ek:Ik:Ikprim}
		\forall_{k\in \Gamma}\	\cE_k \subseteq \{\forall_{j\in \Gamma_{k,2}\cup\{k\}}\, \Ind_j=\Ind_j'\}\quad\text{and}\quad \Pra{\cE_k}\ge 1-8n^{-5}.
	\end{equation}
	Recall that by Lemma~\ref{Lem:CoupligProcesses} 
	\begin{align*}
		\barpi_{(k^-)}=\barbarPi([k-6\Delta])&\sim\Mallows(k-6\Delta,q),\\
		\barpi_{(k)}=\barbarPi([k-6\Delta+1,k+6\Delta+\ell])&\sim\Mallows(12\Delta+\ell,q)\\
		\barpi_{(k^+)}=\barbarPi([k+6\Delta+\ell+1,n])&\sim\Mallows(n-k-\ell-6\Delta,q)\\
		&\quad\text{and these sequences are independent.}
	\end{align*}
	We define a new random variable
	\[
	\bW_{k,2}'=\sum_{j\in \Gamma_{k,2}}\Ind'_k.
	\]
	

	Instead of constructing a coupling of $\bW_{k,2}$ and $\bW_{2}$, we will use the triangle inequality \[
	\sum_{k\in \Gamma}\bB_k\bE|\bW_2-\bW_{k,2}|\le \sum_{k\in \Gamma}\bB_k\bE|\bW_2-\bW_{k,2}'| + \sum_{k\in \Gamma}\bB_k\bE|\bW_{k,2}'-\bW_{k,2}|.
	\] 
	Now we consider two couplings, the first of $\bW_{2}$ and $\bW_{k,2}'$ on the probability space of $\barPi\sim\Mallows(\NN,q)$ and the second of $\bW_{k,2}'$ and $\bW_k$. 
	
	It follows from \eqref{Eq:Ek:Ik:Ikprim} and the definitions of $\bW_2$ and $\bW_{k,2}'$, that on the probability space of $\barPi\sim \Mallows(\NN,q)$
	\begin{equation}\label{Eq:Ek:WWprim}
		\cE_k\subseteq \{\bW_2=\bW_{k,2}'\}.
	\end{equation}
	Therefore, recalling that by \eqref{Eq:Lambda} $\lambda=\Theta(n(1-q)^{\ell-1})$ 
	\[
	\begin{split}
		\sum_{k\in \Gamma}\bB_k\bE|\bW_2-\bW_{k,2}'|
		&= \sum_{k\in \Gamma}\bB_k\left(\bE|\bW_2-\bW_{k,2}'|\Ind_{\cE_k}+\bE|\bW_2-\bW_{k,2}'|\Ind_{\cE_k'}\right)\\
		&\le\sum_{k\in \Gamma}\bB_k(0+n\bE \Ind_{\cE_k'})
		\\
		&\le  \lambda\cdot n\cdot 8n^{-5} = O(n^{-3}(1-q)^{\ell-1}).
	\end{split}
	\]
	Therefore, since $1-q=o(1)$, \[\sum_{k\in \Gamma}\bB_k\bE|\bW_2-\bW_{k,2}'| = O(n^{-3}(1-q)^{-2(\ell-1)}). \]
	
	Now we prove that $\sum_{k\in \Gamma}\bB_k\bE|\bW_{k,2}'-\bW_{k,2}|= O(n^{-3}(1-q)^{-2(\ell-1)})$ as well. By Lemma~\ref{Lem:CoupligProcesses} permutations $\barpi_{(k^-)}$, $\barpi_{(k)}$, and $\barpi_{(k^+)}$  are independent. Moreover, none of the random variables $\{\Ind'_j, j\in \Gamma_{k,2}\}$  is related to $\barpi_{(k)}$. Therefore $\{\Ind'_j,j\in \Gamma_{k,2}\}$ are independent of $\Ind'_k$ and, as a consequence, $\bW'_{k,2}$ and $\Ind'_k$ are independent. Thus \[\Pra{\{\bW'_{k,2}=s\}\cap\{\Ind'_k=1\}}=\Pra{\bW'_{k,2}=s}\Pra{\Ind'_k=1}.\]
	This and the definition of $\bW_{k,2}$ ($\bW_2$ under the condition that $\Ind_k=1$) imply
	\[
	\begin{split}
		\Pra{\bW_{k,2}'=s}&=\frac{\Pra{\{\bW'_{k,2}=s\}\cap\{\Ind'_k=1\}}}{\Pra{\Ind'_k=1}},\\
		\Pra{\bW_{k,2}=s}&=\frac{\Pra{\{\bW_{2}=s\}\cap\{\Ind_k=1\}}}{\Pra{\Ind_k=1}}.
	\end{split}
	\]
	By \eqref{Eq:Ek:Ik:Ikprim} and \eqref{Eq:Ek:WWprim} we have
	\begin{multline*}
		\{\bW_{2}=s\}\cap\{\Ind_k=1\}\cap\cE_k=\{\bW'_{k,2}=s\}\cap\{\Ind'_k=1\}\cap\cE_k
		\\ \text{and}
		\quad
		\{\Ind'_k=1\}\cap \cE_k = \{\Ind_k=1\}\cap \cE_k. 
	\end{multline*}
	Thus 
	\begin{align*}
		&|\Pra{\{\bW_{2}=s\}\cap\{\Ind_k=1\}}-\Pra{\{\bW'_{k,2}=s\}\cap\{\Ind'_k=1\}}|=\\
		&=|\Pra{\{\bW_{2}=s\}\cap\{\Ind_k=1\}\cap\cE_k'}-\Pra{\{\bW'_{k,2}=s\}\cap\{\Ind'_k=1\}\cap\cE_k'}\\
		&\quad+\Pra{\{\bW_{2}=s\}\cap\{\Ind_k=1\}\cap\cE_k}-\Pra{\{\bW'_{k,2}=s\}\cap\{\Ind'_k=1\cap\cE_k\}}|\\
		&\le |\Pra{\{\bW_{2}=s\}\cap\{\Ind_k=1\}\cap\cE_k'}-\Pra{\{\bW'_{k,2}=s\}\cap\{\Ind'_k=1\}\cap\cE_k'}|\\
		&\le 
		\max\{\Pra{\{\bW_{2}=s\}\cap\{\Ind_k=1\}\cap\cE_k'},\Pra{\{\bW'_{k,2}=s\}\cap\{\Ind'_k=1\}\cap\cE_k'}\}\\
		&\le \Pra{\cE_k'}.
	\end{align*}
	In the same way we get
	\begin{align*}
		|\Pra{\Ind_k=1}-\Pra{\Ind'_k=1}|\le \Pra{\cE'_k}.
	\end{align*}
	Set
	\begin{align*}
		a&=\Pra{\{\bW_{2}=s\}\cap\{\Ind_k=1\}}\\
		x&=\Pra{\{\bW_{k,2}'=s\}\cap\{\Ind_k'=1\}}-\Pra{\{\bW_{2}=s\}\cap\{\Ind_k=1\}},\\	
		b&=\Pra{\Ind_k=1},\\
		y&=\Pra{\Ind_k'=1}-\Pra{\Ind_k=1}.
	\end{align*}
	Note that $0\le a\le 1$, $0\le b\le 1$, $|x|,|y|\le \Pra{\cE'_k}\le 8n^{-5}$. Moreover by Theorem~\ref{Thm:Qk:value}, since $6\Delta>\ell-\ln_qn$ and $n^{-5/(\ell-1)}=o(1-q)$, there exist positive constants $C_1,C_2,C_3$ such that $b=\Pra{\Ind_k=1}\ge C_1(1-q)^{\ell-1}$ and $b+y\ge  C_2(1-q)^{\ell-1}-8n^{-5}\ge C_3(1-q)^{\ell-1}$. Therefore
	\begin{align*}
		&\hspace{-1cm}|\Pra{\bW_{k,2}'=s}-\Pra{\bW_{k,2}=s}|=\\
		&=
		\left|
		\frac{\Pra{\{\bW'_{k,2}=s\}\cap\{\Ind'_k=1\}}}{\Pra{\Ind'_k=1}}
		-\frac{\Pra{\{\bW_{k,2}=s\}\cap\{\Ind_k=1\}}}{\Pra{\Ind_k=1}}
		\right|
		\\
		&=\left|\frac{a+x}{b+y}-\frac{a}{b}\right|\\
		&=\left|\frac{ab+bx-ab-ay}{b(b+y)}\right|\\
		&=\left|\frac{xb-ay}{b(b+y)}\right|\\
		&=\frac{|xb|+|ay|}{b(b+y)}\\
		&\le \frac{16 n^{-5}}{C_1C_3(1-q)^{2(\ell-1)}}\\
		&=O(n^{-5}(1-q)^{-2(\ell-1)})
	\end{align*}
	Therefore we may construct a coupling of ($\bW_{k,2}'$,$\bW_{k,2}$) in which we set 
	\[\Pra{\bW_{k,2}'=\bW_{k,2}=s}=\min\{\Pra{\bW_{k,2}'=s},\Pra{\bW_{k,2}=s}\}\]
	and other values we assign so that the marginal distributions fit those of $\bW_{k,2}'$ and $\bW_{k,2}$.
	In this coupling we have 
	\begin{align*}
		\Pra{|\bW_{k,2}'-\bW_{k,2}|\neq 0}&\le \sum_{s\in [|\Gamma_{2,k}|]}|\Pra{\bW_{k,2}'=s}-\Pra{\bW_{k,2}=s}|\\&\le nO(n^{-5}(1-q)^{-2(\ell-1)})=O(n^{-4}(1-q)^{-2(\ell-1)}).\end{align*}
	Thus, since $|\bW_{k,2}'-\bW_{k,2}|\le n$ we have
	\[
	\bE|\bW_{k,2}'-\bW_{k,2}|\le n \Pra{|\bW_{k,2}'-\bW_{k,2}|\neq 0}=O(n^{-3}(1-q)^{-2(\ell-1)}).
	\]
\end{proof}

\begin{proof}[Proof of Theorem~\ref{Thm:Main:Poisson}] Let $1-q$ be as in \eqref{Eq:Poisson:q:condition}. 
	By Lemma~\ref{Lem:CoupligProcesses}, the triangle inequality, and Lemma~\ref{Lem:InclusionDTV}
	\begin{align}
		\nonumber
		\dtv&\left(
		\Arr_{\cS}(\varPi_n),\operatorname{Po}(\bE \bW)
		\right)
		=\dtv\left(
		\Arr_{\cS}(\barPi[n]),\operatorname{Po}(\bE \bW)
		\right)\\ \nonumber
		&\le
		\dtv\left(\Arr_{\cS}(\barPi|_{[6\Delta+1,n-6\Delta]}),\operatorname{Po}(\lambda)\right)+
		\dtv\left(\Arr_{\cS}(\barPi|_{[6\Delta+1,n-6\Delta]}),\Arr_{\cS}(\barPi[n])\right)\\
		&\le 5n^{-5}+2\bE(\Arr_{\cS}(\varPi_{12\Delta+\ell}))+\dtv\left(\Arr_{\cS}(\barPi|_{[6\Delta+1,n-6\Delta]}),\Arr_{\cS}(\barPi[n])\right)\label{Eq:Final}
	\end{align}
	By \eqref{Eq:Poisson:Dtv:Suma}, Lemma~\ref{Lem:dtv:Coupling}, and \eqref{Eq:Lambda}
	\begin{multline}
		\dtv\left(\Arr_{\cS}(\barPi|_{[6\Delta+1,n-6\Delta]}),\Arr_{\cS}(\barPi[n])\right)
		=O((1\wedge n^{-1}(1-q)^{-(\ell-1)})\\
		\cdot(n(1-q)^{2\ell-2}+n\ln n(1-q)^{2\ell-3}+n(1-q)^{\ell}+n^{-3}(1-q)^{-2(\ell-1)}))
	\end{multline}
	For $n(1-q)^{\ell-1}\le 1$ we have $1\wedge n^{-1}(1-q)^{-(\ell-1)}=1$. Therefore, for $n(1-q)^{\ell-1}\le 1$ and $\ell\ge 3$,
	\begin{align*}
		n(1-q)^{2\ell-2}&\le n\cdot n^{-2} = o(1),\\
		n\ln n(1-q)^{2\ell-3}&\le n\ln n \cdot n^{-\frac{2\ell-3}{\ell-1}}=o(1)\\
		n(1-q)^{\ell}&\le n\cdot n^{-\ell/(\ell-1)} = o(1)
	\end{align*}
	In addition, by the first condition of \eqref{Eq:Poisson:q:condition}, we have
	\[n^{-3}(1-q)^{-2(\ell-1)}=o(1).\]
	Moreover, for  $n(1-q)^{\ell-1} > 1$ we have $1\wedge n^{-1}(1-q)^{-(\ell-1)}=n^{-1}(1-q)^{-(\ell-1)}$ and by the second condition of \eqref{Eq:Poisson:q:condition}
	\begin{align*}
		n^{-1}(1-q)^{-(\ell-1)}\cdot n(1-q)^{2\ell-2}&=(1-q)^{\ell-1}=o(1),\\
		n^{-1}(1-q)^{-(\ell-1)}\cdot n\ln n(1-q)^{2\ell-3}&= \ln n \cdot (1-q)^{\ell-2}=o(1)\\
		n^{-1}(1-q)^{-(\ell-1)}\cdot n(1-q)^{\ell}&=1-q= o(1).
	\end{align*}
	Last but not least, for  $n(1-q)^{\ell-1} > 1$
	\[n^{-1}(1-q)^{-(\ell-1)}\cdot n^{-3}(1-q)^{-2(\ell-1)}<n^{-1}=o(1).\]
	This proves that \eqref{Eq:Poisson:q:condition} implies
	\[\dtv\left(\Arr_{\cS}(\barPi|_{[6\Delta+1,n-6\Delta]}),\Arr_{\cS}(\barPi[n])\right)=o(1).\]
	Moreover by Theorem~\ref{Thm:Pinsky} and \eqref{Eq:Def:Delta}, for $q$ as in \eqref{Eq:Poisson:q:condition} \[\bE\Arr_{\cS}(\varPi_{12\Delta+\ell})\le \bE\Arr_{\bS_\ell}(\varPi_{12\Delta+\ell})= O(\Delta(1-q)^{\ell-1}) = O\left(\ln n (1-q)^{\ell-2}\right)=o(1). \]
	The two above equations with \eqref{Eq:Final} imply Theorem~\ref{Thm:Main:Poisson}.

\end{proof}

\section*{Acknowledgements}

I would like to express my sincere gratitude to John Sylvester for introducing me to the subject of Mallows permutations, for some fruitful discussions, and for providing suggestions that have improved the quality of this paper.




 
\end{document}